\theoremstyle:=plain,definition,remark\do{%
        \expandafter\g@addto@macro\csname th@\theoremstyle\endcsname{%
            \addtolength\thm@preskip\parskip
            }%
        }
\theoremstyle{plain}
\newtheorem{theorem}{Theorem}[section]
\newtheorem{corollary}[theorem]{Corollary}
\newtheorem{lemma}[theorem]{Lemma}
\newtheorem{proposition}[theorem]{Proposition}
\newtheorem{conjecture}[theorem]{Conjecture}
\theoremstyle{definition}
\newtheorem{definition}[theorem]{Definition}
\newtheorem{example}[theorem]{Example}
\newtheorem*{acknowledgements*}{Acknowledgements}
\theoremstyle{remark}
\newtheorem{remark}[theorem]{Remark}
\begin{document}
\sloppy

\title{\Large{\textbf{SMOOTH SURFACES IN SMOOTH FOURFOLDS WITH VANISHING FIRST CHERN CLASS}}}
\author{\normalsize{BENJAMIN E. DIAMOND}}
\date{}
\maketitle

\begin{abstract}
According to a conjecture attributed to Hartshorne and Lichtenbaum and proven by Ellingsrud and Peskine \cite{Ellingsrud:1989aa}, the smooth rational surfaces in $\mathbb{P}^4$ belong to only finitely many families. We formulate and study a collection of analogous problems in which $\mathbb{P}^4$ is replaced by a smooth fourfold $X$ with vanishing first integral Chern class. We embed such $X$ into a smooth ambient variety and count families of smooth surfaces which arise in $X$ from the ambient variety. We obtain various finiteness results in such settings. The central technique is the introduction of a new numerical invariant for smooth surfaces in smooth fourfolds with vanishing first Chern class.
\end{abstract}

\section{Introduction}
That those smooth complex projective algebraic varieties with vanishing first integral Chern class form a significant class has been understood for some time \cite{Beauville:1983aa}. This class includes the Calabi--Yau manifolds, which now occupy a central place in theoretical physics \cite{Candelas:1991aa, Greene:1995aa, Brunner:1997aa}, as well as, as a further special case, the hyper-K\"{a}hler varieties, which have proven a fertile testing ground for Bloch--Beilinson-type conjectures \cite{Beauville:2007aa, Voisin:2008aa, Ferretti:2012aa, Voisin:2016aa}. In this paper, we develop a family of techniques geared towards treating the fourfolds in this class---which, as the cited papers demonstrate, are of particular importance---and, in particular, the smooth surfaces inside them.

It was conjectured by Hartshorne and Lichtenbaum and proven by Ellingsrud and Peskine \cite{Ellingsrud:1989aa} in 1989 that the smooth surfaces not of general type in $\mathbb{P}^4$ have bounded degree. Ciliberto and Di Gennaro \cite{Ciliberto:2002aa} have generalized Ellingsrud and Peskine's result to more general smooth fourfolds, showing that for any such fourfold $X$ with fixed ample divisor, the smooth surfaces not of general type in $X$ have bounded degree whenever $X$ has Picard rank $\rho(X) = 1$. In this latter case, additional auxiliary arguments go on to demonstrate that this finiteness persists even when the surfaces are taken up to algebraic equivalence within the fourfold $X$ itself, as opposed to in an ambient projective space.

We develop a technique which treats the smooth surfaces in any smooth fourfold $X$ with vanishing first Chern class. Our technique, as above, counts families of smooth surfaces taken up to an adequate equivalence relation within the fourfold $X$ itself; in contrast with the above results, we count families not of non-general type smooth surfaces in $X$ but rather of smooth surfaces $S$ in $X$ whose Chern number expression $\text{deg} \left( c_1^2(\mathcal{T}_S) - c_2(\mathcal{T}_S) \right)$ attains some fixed value $s \in \mathbb{Z}$, where $s$ here is chosen freely. This expression detects a surface's Kodaira dimension, as well as, for example, its holomorphic Euler characteristic.

Central to the technique is the introduction of a new invariant for smooth surfaces $S$ in a smooth fourfold $X$ with vanishing first Chern class (see Proposition \ref{invariant}):
\begin{proposition}
The value of the Chern number expression $\emph{deg} \left( c_1^2(\mathcal{T}_S) - c_2(\mathcal{T}_S) \right)$ depends only on a smooth surface $S$ in $X$'s numerical equivalence class.
\end{proposition}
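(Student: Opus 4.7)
The plan is to reduce the expression $\deg(c_1^2(\mathcal{T}_S) - c_2(\mathcal{T}_S))$ to intersection numbers taken in $X$, which are manifestly numerical invariants of $[S]$. The mechanism is the normal bundle sequence
\[
0 \to \mathcal{T}_S \to \mathcal{T}_X|_S \to \mathcal{N}_{S/X} \to 0,
\]
which yields the Whitney product formula $c(\mathcal{T}_X|_S) = c(\mathcal{T}_S) \cdot c(\mathcal{N}_{S/X})$.

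First I would exploit the vanishing of $c_1(\mathcal{T}_X)$. Restricting to $S$ and reading off degree-one terms in the Whitney formula gives $c_1(\mathcal{T}_S) = -c_1(\mathcal{N}_{S/X})$; in particular $c_1^2(\mathcal{T}_S) = c_1^2(\mathcal{N}_{S/X})$. Next I would extract the degree-two part, namely
\[
c_2(\mathcal{T}_X|_S) = c_2(\mathcal{T}_S) + c_1(\mathcal{T}_S) \cdot c_1(\mathcal{N}_{S/X}) + c_2(\mathcal{N}_{S/X}),
\]
and substitute $c_1(\mathcal{T}_S) \cdot c_1(\mathcal{N}_{S/X}) = -c_1^2(\mathcal{N}_{S/X})$ to obtain
\[
c_2(\mathcal{T}_S) = c_2(\mathcal{T}_X|_S) + c_1^2(\mathcal{N}_{S/X}) - c_2(\mathcal{N}_{S/X}).
\]
Combining, the $c_1^2(\mathcal{N}_{S/X})$ terms cancel and I land on the clean identity
\[
c_1^2(\mathcal{T}_S) - c_2(\mathcal{T}_S) = c_2(\mathcal{N}_{S/X}) - c_2(\mathcal{T}_X|_S).
\]

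The final step is to recognize both remaining terms as intersection numbers in $X$. By the self-intersection formula, $\deg c_2(\mathcal{N}_{S/X}) = [S]\cdot[S]$ computed in $X$, while $\deg c_2(\mathcal{T}_X|_S) = c_2(\mathcal{T}_X)\cdot[S]$ by the projection formula applied to the inclusion $S \hookrightarrow X$. Thus
\[
\deg\!\bigl(c_1^2(\mathcal{T}_S) - c_2(\mathcal{T}_S)\bigr) = [S]\cdot[S] - c_2(\mathcal{T}_X)\cdot[S],
\]
and since both pairings depend only on the numerical class of $[S]$ (the first because if $[S] \equiv [S']$ numerically then $[S]^2 = [S]\cdot[S'] = [S']^2$, and the second because $c_2(\mathcal{T}_X)$ is a fixed class on $X$), the conclusion follows.

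I do not anticipate a serious obstacle: the argument is a direct Chern-class calculation whose only substantive input is $c_1(\mathcal{T}_X) = 0$. The closest thing to a subtle point is ensuring that the self-intersection $[S]\cdot[S]$ is genuinely a numerical invariant of $[S]$, which is immediate from the symmetry of the intersection pairing on $X$ as indicated above.
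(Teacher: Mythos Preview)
Your proof is correct and follows essentially the same approach as the paper: both use the normal bundle sequence and the vanishing of $c_1(\mathcal{T}_X)$ to rewrite $c_1^2(\mathcal{T}_S)-c_2(\mathcal{T}_S)$ in terms of $c_2(\mathcal{N}_{S/X})$ and $c_2(\mathcal{T}_X|_S)$, then invoke the self-intersection and push-pull formulas to arrive at $\deg([S]\cdot[S])-\deg([S]\cdot c_2(\mathcal{T}_X))$. The only cosmetic difference is that the paper solves for $c_2(\mathcal{N}_{S/X})$ and then applies the self-intersection formula, whereas you solve for $c_2(\mathcal{T}_S)$ first; the algebra and the ingredients are identical.
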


We furthermore facilitate the computation of this invariant by embedding $X$ into an ambient variety. We define a class of embeddings to which our theory applies (see Definition \ref{clean}):

\begin{definition}
We will say that a smooth fourfold $X$ with vanishing first Chern class is embedded in a smooth variety $V$ \textit{cleanly} if the second Chern class $c_2 \left( \mathcal{N}_{X/V} \right)$ of the normal bundle of $X$ in $V$ is the restriction to $X$ of a cycle class on $V$.
\end{definition}

To any clean embedding $X \subset V$ we associate a remarkable function defined on the group of codimension-2 cycles in $V$ up to numerical equivalence (see Definition \ref{associated}, as well as Proposition \ref{core} and Lemma \ref{quadratic}):

\begin{proposition}
Let $X \subset V$ be a clean embedding. Then there exists a function $Q_{X \subset V} \colon N^2(V) \rightarrow \mathbb{Z}$ with the properties that
\begin{enumerate}[label=\roman*., font=\itshape]
\item If a smooth surface $S \subset X$ satisfies $[S] = i^*(\alpha)$ for $\alpha \in N^2(V)$, then $Q_{X \subset V}(\alpha) = \emph{deg} \left( c_1^2(\mathcal{T}_S) - c_2(\mathcal{T}_S) \right)$.
\item Under an identification $N^2(V) \cong \mathbb{Z}^m$, the function $Q_{X \subset V}$ becomes quadratic with integral coefficients.
\end{enumerate}
\end{proposition}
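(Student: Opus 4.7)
The plan is to derive, for any smooth surface $S \subset X$ with $[S] = i^*\alpha$, a closed-form expression for $\deg(c_1^2(\mathcal{T}_S) - c_2(\mathcal{T}_S))$ purely in terms of $\alpha$ and intrinsic data of the embedding $X \subset V$, and to take this expression as the definition of $Q_{X \subset V}$ on all of $N^2(V)$.

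The key starting identity comes from the normal bundle sequence $0 \to \mathcal{T}_S \to \mathcal{T}_X|_S \to \mathcal{N}_{S/X} \to 0$ together with the hypothesis $c_1(\mathcal{T}_X) = 0$: matching degree-two terms in the resulting multiplicativity of total Chern classes yields
$$c_1^2(\mathcal{T}_S) - c_2(\mathcal{T}_S) = c_2(\mathcal{N}_{S/X}) - c_2(\mathcal{T}_X)|_S.$$
Taking degrees and invoking the self-intersection formula, $\deg(c_2(\mathcal{N}_{S/X})) = [S]^2$ in $X$, which by $[S] = i^*\alpha$ and the projection formula equals $\int_V \alpha^2 \cdot [X]$. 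A parallel calculation for $X \subset V$, again using $c_1(\mathcal{T}_X) = 0$, gives $c_2(\mathcal{T}_X) = c_2(\mathcal{T}_V)|_X - c_2(\mathcal{N}_{X/V})$; at this point the clean embedding hypothesis supplies a lift $\beta \in N^2(V)$ with $i^*\beta = c_2(\mathcal{N}_{X/V})$, so that $c_2(\mathcal{T}_X) = i^*(c_2(\mathcal{T}_V) - \beta)$, and a second application of the projection formula yields $\deg(c_2(\mathcal{T}_X) \cdot [S]) = \int_V (c_2(\mathcal{T}_V) - \beta) \cdot \alpha \cdot [X]$. I would then define
$$Q_{X \subset V}(\alpha) := \int_V \alpha \cdot \bigl(\alpha - c_2(\mathcal{T}_V) + \beta\bigr) \cdot [X],$$
from which property (i) is immediate. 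Property (ii) follows because the intersection form $\alpha \mapsto \int_V \alpha^2 \cdot [X]$ is a symmetric $\mathbb{Z}$-valued bilinear pairing on $N^2(V) \cong \mathbb{Z}^m$, and the linear correction $\alpha \mapsto -\int_V (c_2(\mathcal{T}_V) - \beta) \cdot \alpha \cdot [X]$ is likewise $\mathbb{Z}$-valued on a basis, so that $Q_{X \subset V}$ is visibly a quadratic polynomial with integer coefficients.

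The step warranting a dedicated argument is well-definedness of $Q_{X \subset V}$ in the lift $\beta$: if $\beta' \in N^2(V)$ is a second lift of $c_2(\mathcal{N}_{X/V})$, then $i^*(\beta - \beta') = 0$ in $N^2(X)$, and the resulting change in $Q_{X\subset V}(\alpha)$ is $\int_V (\beta - \beta') \cdot \alpha \cdot [X] = \int_X i^*(\beta - \beta') \cdot i^*\alpha = 0$ by projection. Beyond this mild bookkeeping, the main conceptual hurdle is recognizing why the clean embedding hypothesis is indispensable: it is precisely what allows $c_2(\mathcal{T}_X) \cdot [S]$ to be expressed in terms of $\alpha$ alone, and without it no function $Q$ on $N^2(V)$ depending only on $\alpha$ could compute the Chern number expression.
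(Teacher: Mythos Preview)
Your proof is correct and follows essentially the same route as the paper: the paper's Lemma~\ref{restriction} is your computation $c_2(\mathcal{T}_X) = i^*(c_2(\mathcal{T}_V) - \beta)$, its Definition~\ref{associated} is your formula for $Q_{X\subset V}$ (with the lift written as ``$c_2(\mathcal{T}_X)$'' by abuse of notation rather than $c_2(\mathcal{T}_V)-\beta$), and Proposition~\ref{core} and Lemma~\ref{quadratic} carry out the same self-intersection/push-pull computation and basis expansion you describe. Your explicit verification that $Q_{X\subset V}$ is independent of the choice of lift $\beta$ is a small point the paper glosses over with the remark that it will ``decline to distinguish'' between $c_2(\mathcal{T}_X)$ and its lift.
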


We isolate a condition which controls the behavior of the function $Q_{X \subset V}$ (see Definition \ref{pair}):

\begin{definition}
We will call a clean embedding $X \subset V$ a \textit{decent pair} if the integral quadratic form $\alpha \mapsto \text{deg} ([X] \cdot \alpha \cdot \alpha)$ on $N^2(V)$ is positive definite.
\end{definition}

When $X \subset V$ is a decent pair, the asymptotic study of $Q_{X \subset V}$ yields (see Theorem \ref{bound}):

\begin{theorem} \label{imain}
Let $X \subset V$ be a decent pair. Then for any $s \in \mathbb{Z}$, at most finitely many numerical equivalence classes in $N^2(X)$ are representable by a smooth surface $S$ in $X$ satisfying $\emph{deg} \left( c_1^2(\mathcal{T}_S) - c_2(\mathcal{T}_S) \right) \leq s$ which arises in $X$ as a generically transverse intersection in $V$.
\end{theorem}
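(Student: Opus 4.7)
The plan is to reduce the theorem to the classical fact that a positive definite integral quadratic function on a lattice takes each value only finitely often, applied to the function $Q_{X \subset V}$ from the preceding proposition. Fix $s \in \mathbb{Z}$ and suppose $S \subset X$ is a smooth surface satisfying $\deg(c_1^2(\mathcal{T}_S) - c_2(\mathcal{T}_S)) \leq s$ and arising as a generically transverse intersection of $X$ with a codimension-two cycle on $V$ of class $\alpha \in N^2(V)$. Then $[S] = i^*(\alpha) \in N^2(X)$, and property (i) of the earlier proposition gives $Q_{X \subset V}(\alpha) \leq s$. Consequently every numerical class under consideration lies in $i^*\bigl(\{\alpha \in N^2(V) : Q_{X \subset V}(\alpha) \leq s\}\bigr)$, and it suffices to prove this preimage is finite.

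Property (ii) supplies an identification $N^2(V) \cong \mathbb{Z}^m$ under which $Q_{X \subset V}$ is an integral quadratic polynomial; write $Q_{X \subset V}(\alpha) = q_2(\alpha) + \ell(\alpha) + c$ in terms of its homogeneous components. The central claim is that the quadratic part $q_2$ is the form $\alpha \mapsto \deg_V([X] \cdot \alpha \cdot \alpha)$, so that the decent-pair hypothesis forces $q_2$ to be positive definite. To see this, note that adjunction together with $c_1(\mathcal{T}_X) = 0$ yields
\[
c_1^2(\mathcal{T}_S) - c_2(\mathcal{T}_S) = c_2(\mathcal{N}_{S/X}) - c_2(\mathcal{T}_X)|_S
\]
for any smooth surface $S \subset X$; the self-intersection formula applied to a generically transverse intersection of class $\alpha$ evaluates the first term as $\deg_V([X] \cdot \alpha \cdot \alpha)$, while $\deg c_2(\mathcal{T}_X)|_S$ is linear in $\alpha$ once $c_2(\mathcal{T}_X)$ is lifted to a class on $V$---which the clean-embedding hypothesis permits, since the Whitney formula applied to $0 \to \mathcal{T}_X \to \mathcal{T}_V|_X \to \mathcal{N}_{X/V} \to 0$ gives $c_2(\mathcal{T}_X) = c_2(\mathcal{T}_V)|_X - c_2(\mathcal{N}_{X/V})$.

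With this identification in hand, $q_2$ is positive definite by the decent-pair hypothesis; completing the square shows $\{\alpha \in N^2(V) \otimes \mathbb{R} : Q_{X \subset V}(\alpha) \leq s\}$ is contained in a Euclidean ball, so its intersection with the lattice $\mathbb{Z}^m$ is finite. Applying $i^*$ produces a finite subset of $N^2(X)$ exhausting the classes $[S]$ in the statement. The main obstacle is the quadratic-part identification in the second paragraph: verifying that the form extracted from $Q_{X \subset V}$ coincides with the one measured by the decent-pair definition. Once this is done, the finiteness reduces to standard lattice-point counting, and all positivity in the argument is supplied by the decent-pair assumption alone.
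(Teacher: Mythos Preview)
Your proposal is correct and follows essentially the same route as the paper's proof of Theorem~\ref{bound}: use Proposition~\ref{core} to confine the relevant $\alpha \in N^2(V)$ to the sublevel set $\{Q_{X \subset V} \leq s\}$, then invoke positive definiteness of the second-order part to conclude finiteness on the lattice and push forward via $i^*$. The one redundancy is your second paragraph: the paper defines $Q_{X \subset V}(\alpha) = \deg([X]\cdot\alpha\cdot\alpha) - \deg([X]\cdot\alpha\cdot c_2(\mathcal{T}_X))$ explicitly (Definition~\ref{associated}), so the identification of the quadratic part with $\alpha \mapsto \deg([X]\cdot\alpha\cdot\alpha)$ is immediate and need not be re-derived via adjunction and self-intersection.
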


We finally develop a criterion for decency (see Proposition \ref{picdecent}):

\begin{proposition}
If $X$ is an intersection of ample divisors in $V$ and $\rho(V) = 1$, then $X \subset V$ is a decent pair.
\end{proposition}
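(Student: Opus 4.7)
The plan is to verify both conditions defining a decent pair: cleanness of $X \subset V$, and positive definiteness of $\alpha \mapsto \deg([X] \cdot \alpha \cdot \alpha)$ on $N^2(V)$. Writing $X = D_1 \cap \cdots \cap D_r$ for ample divisors $D_1, \dots, D_r$ with $r = \dim V - 4$, the normal bundle splits as $\mathcal{N}_{X/V} \cong \bigoplus_i \mathcal{O}_X(D_i)$, so $c_2(\mathcal{N}_{X/V}) = \sum_{i<j}(D_i D_j)|_X$ is the restriction to $X$ of the cycle class $\sum_{i<j} D_i D_j \in N^2(V)$, establishing cleanness. Let $n = \dim V$ and let $H$ generate $N^1(V)_\mathbb{Q}$ as an ample class. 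Since each $D_i$ is numerically a positive rational multiple of $H$, the class $[X]$ is a positive rational multiple of $H^{n-4}$; it therefore suffices to show that the form $Q_H(\alpha) := \deg(H^{n-4} \cdot \alpha^2)$ is positive definite on $N^2(V)_\mathbb{R}$.

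I would prove this via the Hodge--Riemann bilinear relations on $H^4(V, \mathbb{R})$. The Lefschetz decomposition $H^4 = P^4 \oplus LP^2 \oplus \mathbb{R} H^2$, where $L$ denotes cup product with $H$, is orthogonal with respect to $Q_H$. Hodge--Riemann gives $Q_H > 0$ on the $(2,2)$-part $P^{2,2}_\mathbb{R}$ of $P^4$ (the relevant sign $(-1)^{k(k-1)/2} i^{p-q}$ equals $+1$ at $k = 4$, $p = q = 2$) and on $\mathbb{R} H^2$ (where $Q_H(H^2) = \deg(H^n) > 0$), while $Q_H < 0$ on $L(P^{1,1}_\mathbb{R})$. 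Since $Q_H$ on $N^2(V)_\mathbb{R}$ is computed by lifting to the algebraic subspace of $H^{2,2}(V, \mathbb{R})$, and since homologically trivial cycles are numerically trivial, it suffices to show that this algebraic subspace lies entirely within $P^{2,2}_\mathbb{R} \oplus \mathbb{R} H^2$.

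To establish this, take an algebraic $\alpha \in H^4(V, \mathbb{R})$ and decompose it as $\alpha = \alpha_p + L\gamma + c'H^2$ with $\alpha_p \in P^4 \cap H^{2,2}_\mathbb{R}$ and $\gamma \in P^2 \cap H^{1,1}_\mathbb{R}$. Applying $L^{n-3}$ and using $L^{n-3}\alpha_p = 0$ yields $L^{n-3}\alpha = L^{n-2}\gamma + c'H^{n-1}$. Now $L^{n-3}\alpha$ is an algebraic dimension-one class, and because numerical and homological equivalence coincide on $1$-cycles on a smooth projective variety (Matsusaka) while duality of the numerical pairing forces $\operatorname{rank} N^{n-1}(V) = \rho(V) = 1$, the algebraic subspace of $H^{2n-2}(V, \mathbb{R})$ is exactly $\mathbb{R} H^{n-1}$. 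Thus $L^{n-3}\alpha = cH^{n-1}$ for some $c \in \mathbb{R}$, giving $L^{n-2}\gamma = (c - c')H^{n-1}$; applying $L$ once more and using primitivity of $\gamma$ forces $(c - c')H^n = 0$, so $c = c'$ and hence $L^{n-2}\gamma = 0$; Hard Lefschetz injectivity of $L^{n-2}: H^2 \to H^{2n-2}$ then forces $\gamma = 0$. So $\alpha = \alpha_p + c'H^2$, and orthogonality reduces $Q_H(\alpha)$ to $Q_H(\alpha_p) + (c')^2 \deg(H^n)$, which is positive whenever $\alpha \neq 0$.

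The hard part is the step just above that constrains the algebraic subspace of $H^4$ to avoid the negative direction of the Hodge--Riemann form. This is precisely where the hypothesis $\rho(V) = 1$ enters essentially, transported through Poincar\'{e}-type duality to control $N^{n-1}(V)$ and then back to codimension two via Hard Lefschetz applied to the primitive component $\gamma$.
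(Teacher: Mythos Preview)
Your argument is correct. Both proofs hinge on the Hodge--Riemann bilinear relations and the Lefschetz decomposition of $H^{2,2}$, but you and the paper diverge at two points.

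First, you use $\rho(V)=1$ at the outset to replace $[X]=D_1\cdots D_{n-4}$ by a positive multiple of $H^{n-4}$, so only the classical single-polarization Hodge--Riemann relations are needed. The paper instead keeps the product $\prod c_1(L_k)$ and invokes the mixed Hodge--Riemann relations of Timorin and Dinh--Nguy\^{e}n; this is heavier machinery, and under the hypothesis $\rho(V)=1$ your reduction shows it is avoidable. Second, to kill the negative summand $L(H^{1,1}_{\mathrm{prim}})$, the paper applies the Lefschetz $(1,1)$ theorem directly: since every rational $(1,1)$-class is algebraic and $\rho(V)=1$, one gets $H^{1,1}(V,\mathbb{Q})_{\mathrm{prim}}=0$, so the entire middle summand of $H^{2,2}(V,\mathbb{Q})$ vanishes and the pairing is positive on all of $H^{2,2}(V,\mathbb{Q})$. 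You work instead only on the algebraic part of $H^{2,2}$, pushing forward by $L^{n-3}$ to $1$-cycles and using Matsusaka's theorem together with duality $N^1\cong N_1^{\vee}$ to force the $L\gamma$-component to zero via Hard Lefschetz.

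The paper's route is shorter at that second step and yields the stronger conclusion that the form is positive definite on all of $H^{2,2}(V,\mathbb{Q})$, not just on algebraic classes. Your route is more self-contained (standard Hodge--Riemann only) and makes transparent that the hypothesis $\rho(V)=1$ is doing double duty: once to reduce to a single polarization, and once (via $N_1$) to eliminate the $LP^2$ component.
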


We also develop versions of the theory for finer adequate equivalence relations.

This method serves to establish---or recover---the non-rationality of many smooth surfaces $S$ in $X$. We also produce new results in the flavor of that of Ciliberto and Di Gennaro. For example (see Theorem \ref{cdg}):

\begin{theorem}
Consider a smooth fourfold $X$ with vanishing first Chern class and Picard rank $1$. Then for any $r \in \mathbb{Z}$, at most finitely many numerical equivalence classes in $N^2(X)$ are representable by a smooth surface $S$ in $X$ satisfying $\chi(S, \mathcal{O}_S) \leq r$.
\end{theorem}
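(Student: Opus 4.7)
The plan is to convert the one-sided bound $\chi(\mathcal{O}_S) \leq r$ into a bound on the quadratic invariant $s := \deg(c_1^2(\mathcal{T}_S) - c_2(\mathcal{T}_S))$ supplied by Proposition \ref{invariant}, and then to use that the Picard rank one hypothesis forces the corresponding quadratic form to be proper on $N^2(X)$. Using Noether's formula $12\chi(\mathcal{O}_S) = K_S^2 + e(S)$ together with $s = K_S^2 - e(S)$, I obtain the identity $s = 12\chi(\mathcal{O}_S) - 2e(S)$; invoking Proposition \ref{invariant} and the normal bundle sequence (together with $c_1(X) = 0$), $s$ in turn equals the quadratic expression $[S]^2 - c_2(X) \cdot [S]$ in the class $[S] \in N^2(X)$.

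The next step is to show that the function $\alpha \mapsto \alpha^2 - c_2(X) \cdot \alpha$ on $N^2(X)$ is proper, equivalently, that its leading part $\alpha \mapsto \alpha^2$ is positive definite on $N^2(X, \mathbb{R})$. This is a Hodge-theoretic consequence of $\rho(X) = 1$: in the Lefschetz decomposition $H^{2,2}(X, \mathbb{R}) = \mathbb{R} \cdot H^2 \oplus L P^{1,1}(X, \mathbb{R}) \oplus P^{2,2}(X, \mathbb{R})$, the Hodge--Riemann bilinear relations show that the intersection form is positive on the first and third summands and negative on the middle. Since the Lefschetz $(1,1)$ theorem identifies $H^{1,1}(X, \mathbb{Q})$ with $N^1(X) \otimes \mathbb{Q} = \mathbb{Q} \cdot H$, and since $H \notin P^{1,1}(X)$ (as $\int H \cdot H^3 > 0$), one concludes $N^2(X, \mathbb{R}) \cap L P^{1,1}(X, \mathbb{R}) = 0$, whence positive definiteness. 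Consequently $\{[S] \in N^2(X) : s([S]) \leq M\}$ is finite for every $M$. (An alternative route is to embed $X$ in a smooth ambient $V$ with $\rho(V) = 1$ and invoke Proposition \ref{picdecent}.)

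It then remains to deduce an upper bound $s \leq M(r)$ from $\chi(\mathcal{O}_S) \leq r$. By $s = 12\chi - 2e(S)$, this reduces to a uniform lower bound on $e(S) = 2 - 2b_1(S) + b_2(S)$, hence to a uniform upper bound on $b_1(S)$, across smooth surfaces $S \subset X$ with $\chi(\mathcal{O}_S) \leq r$. A Lefschetz--Sommese-type argument, exploiting that $H^1(X, \mathbb{Q}) = 0$ (which holds since $c_1(X) = 0$) together with the ample class $H$ supplied by $\rho(X) = 1$, produces this bound. The main obstacle is precisely this final step: Lefschetz--Sommese vanishing yields $b_1(S) = 0$ outright when the normal bundle $\mathcal{N}_{S/X}$ is ample, but handling arbitrary smooth $S \subset X$ requires adapting the argument to general codimension-two subvarieties, where the Picard rank one hypothesis must compensate for the absence of direct ampleness.
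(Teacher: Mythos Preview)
Your argument for the positive definiteness of $\alpha \mapsto \alpha^2$ on $N^2(X)$ is correct and is precisely the paper's route: apply the Hodge--Riemann relations and kill the middle summand $L\,P^{1,1}$ of the Lefschetz decomposition using $\rho(X)=1$ (this is Proposition~\ref{picdecent} specialized to the tautological clean embedding $X\subset X$, as in Theorem~\ref{cdg}).

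The genuine gap is the final step, where you try to convert $\chi(\mathcal{O}_S)\leq r$ into an upper bound on $s=\deg(c_1^2(\mathcal{T}_S)-c_2(\mathcal{T}_S))$ by writing $s=12\chi-2e(S)$ and then seeking a uniform lower bound on $e(S)$, equivalently a uniform upper bound on $b_1(S)$. Two problems: first, the assertion that $c_1(X)=0$ forces $H^1(X,\mathbb{Q})=0$ is false (abelian fourfolds have $c_1=0$ and $b_1=8$), so the premise of your Lefschetz--Sommese argument is not available in the stated generality. Second, and more seriously, no Lefschetz-type restriction theorem controls $b_1(S)$ for an \emph{arbitrary} smooth codimension-two subvariety $S\subset X$; such theorems require ampleness (or at least positivity) of the normal bundle, and the Picard-rank-one hypothesis on $X$ does not supply that. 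You yourself flag this as ``the main obstacle,'' and indeed it is fatal to this route.

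The paper sidesteps this entirely. Instead of bounding $e(S)$, it bounds $s$ directly by surface theory (Proposition~\ref{euler}): pass to a minimal model (legitimate since blowing down only increases $s$, Lemma~\ref{blow}); for minimal surfaces not of general type the classification gives $s\leq 6\chi$ case by case; for minimal surfaces of general type the Bogomolov--Miyaoka--Yau inequality $c_1^2\leq 3c_2$ yields
\[
c_1^2-c_2 \;\leq\; \tfrac{1}{2}c_1^2+\tfrac{3}{2}c_2-c_2 \;=\; \tfrac{1}{2}(c_1^2+c_2) \;=\; 6\chi.
\]
Thus $s\leq 6r$ whenever $\chi\leq r$, and the positive-definiteness you already established finishes the proof. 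Replacing your $b_1$-bound strategy with this BMY argument closes the gap.
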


Specializing the result of Theorem \ref{imain} to example pairs $X \subset V$ for which the associated function $Q_{X \subset V}$ can be determined exactly, we enrich its finiteness assertions so as to specify concrete numerical bounds. These bounds are in each case obtained through lattice point counting techniques of varied and interesting forms. For example (see Theorems \ref{sextic}, \ref{fano}, and \ref{130}):

\begin{theorem}
Consider the smooth sextic fourfold $X \subset \mathbb{P}^5$. Let $r \in \mathbb{Z}$. Then at most
\begin{equation*}\frac{\sqrt{90^2 + 144r}}{6} + 1\end{equation*}
elements of $CH^2(X)$ are representable by a smooth surface $S$ in $X$ satisfying $\chi(S, \mathcal{O}_S) \leq r$ which arises in $X$ as a generically transverse intersection in $\mathbb{P}^5$.
\end{theorem}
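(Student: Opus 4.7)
The plan is to specialize Theorem \ref{imain} to the pair $X \subset V = \mathbb{P}^5$, compute the associated quadratic form $Q_{X \subset V}$ explicitly, translate the hypothesis $\chi(\mathcal{O}_S) \leq r$ into a numerical bound on $Q$, and perform a lattice-point count. The decency of $X \subset V$ follows from Proposition \ref{picdecent}, since $X$ is an ample divisor in $V$ and $\rho(V) = 1$.

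First I would compute $Q_{X \subset V}$. The normal bundle $\mathcal{N}_{X/V} = \mathcal{O}_X(6)$ is a line bundle, so $c_2(\mathcal{N}_{X/V}) = 0$ and $X \subset V$ is trivially clean in the sense of Definition \ref{clean}. The standard computation $c(\mathcal{T}_X) = (1+H)^6/(1+6H)$, with $H$ the hyperplane class, yields $c_2(\mathcal{T}_X) = 15 H^2|_X$. Identifying $N^2(V) \cong \mathbb{Z}$ via $a H^2 \mapsto a$ and using $\deg_X H^4 = 6$, the construction of $Q_{X \subset V}$ gives
$$Q_{X \subset V}(a) \;=\; (S \cdot S)_X \;-\; (c_2(\mathcal{T}_X) \cdot [S])_X \;=\; 6 a^2 - 90 a$$
for any smooth $S \subset X$ with $[S] = i^*(a H^2)$.

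Next I would convert the bound $\chi(\mathcal{O}_S) \leq r$ into a bound on $Q(a)$. Noether's formula $12 \chi(\mathcal{O}_S) = K_S^2 + e(S)$ combines with the identity $K_S^2 - e(S) = Q(a)$ from Proposition \ref{invariant} to give $2 K_S^2 = Q(a) + 12 \chi(\mathcal{O}_S)$. The Bogomolov--Miyaoka--Yau inequality $K_S^2 \leq 3 e(S)$, equivalent to $K_S^2 \leq 9 \chi(\mathcal{O}_S)$, then yields $Q(a) \leq 6 \chi(\mathcal{O}_S) \leq 6 r$. The condition $6 a^2 - 90 a \leq 6 r$ rearranges to $a \in \left[\tfrac{15 - \sqrt{225 + 4r}}{2},\, \tfrac{15 + \sqrt{225 + 4r}}{2}\right]$, an interval of length $\sqrt{225 + 4r} = \frac{\sqrt{90^2 + 144 r}}{6}$, containing at most $\frac{\sqrt{90^2 + 144 r}}{6} + 1$ integers. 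Since $CH^2(\mathbb{P}^5) = \mathbb{Z} \cdot H^2$ and $H^2|_X$ has infinite order in $CH^2(X)$ (as $(H^2|_X)^2 = 6 \neq 0$), distinct integers $a$ produce distinct classes $a H^2|_X \in CH^2(X)$, and the set of classes representable as the theorem requires is exhausted by such multiples; thus the integer count gives the claimed bound.

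The main obstacle is the appeal to BMY in the middle step: it holds for smooth surfaces with $\kappa \geq 0$ (by Miyaoka's extension) and for rational surfaces, but can fail for irregular ruled surfaces over base curves of genus $\geq 2$. I expect the paper to exclude these cases via a Lefschetz-type argument showing $q(S) = 0$ for smooth $S$ arising as a generically transverse intersection with the sextic, thereby ruling out irregular ruled surfaces and forcing BMY to apply.
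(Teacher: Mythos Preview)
Your computation of $Q_{X \subset \mathbb{P}^5}(a) = 6a^2 - 90a$ and the lattice-point count over the interval of length $\sqrt{90^2 + 24s}/6$ (with $s = 6r$) match the paper exactly; the overall architecture is the same.

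The divergence is in your last paragraph. The paper does \emph{not} supply a Lefschetz-type argument forcing $q(S) = 0$. Instead, the bound $c_1^2(\mathcal{T}_S) - c_2(\mathcal{T}_S) \leq 6\chi(S,\mathcal{O}_S)$ is isolated as a separate proposition (Proposition~\ref{euler}): one first replaces $S$ by its minimal model (blowing down only increases $c_1^2 - c_2$, by Lemma~\ref{blow}, while preserving $\chi$), then invokes BMY in the general-type case and cites the Enriques--Kodaira classification table for the non-general-type case. So your expectation about how the paper closes the gap is off.

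That said, your worry about irregular ruled surfaces is well placed and is not actually dispelled by the paper's argument either. For a minimal ruled surface over a curve of genus $g \geq 2$ one has $c_1^2 - c_2 = 4(1-g)$ while $6\chi = 6(1-g)$; since $1-g < 0$, the inequality $4(1-g) \leq 6(1-g)$ fails. Thus the classification citation in the paper's Proposition~\ref{euler} does not by itself cover this case, and the issue you flagged persists there as well. Neither a Lefschetz argument nor any other mechanism excluding such surfaces from transverse linear sections of the sextic is given.
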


\begin{theorem}
Consider the Fano variety $F \subset G(2,6)$ of lines in a general cubic fourfold. Let $r \in \mathbb{Z}$. Then at most
\begin{equation*}\frac{\pi(6r + 207)}{\sqrt{891}} + 8 + 8 \cdot 2 \sqrt{\frac{6r + 207}{9(4 - \sqrt{5})}}\end{equation*}
elements of $CH^2(F)$ are representable by a smooth surface $S$ in $F$ satisfying $\chi(S, \mathcal{O}_S) \leq r$ which arises in $F$ as a generically transverse intersection in $G(2,6)$.
\end{theorem}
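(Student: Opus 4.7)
The plan is to specialize Theorem \ref{imain} to the embedding $F \subset G(2,6)$, compute $Q_{F \subset G(2,6)}$ explicitly via Schubert calculus on $G(2,6)$, convert the hypothesis $\chi(S,\mathcal{O}_S) \leq r$ into a bound of the form $Q_{F \subset G(2,6)}(\alpha) \leq 6r$, and count integer lattice points in the resulting planar ellipse.

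First I would verify that $F \subset G(2,6)$ is a clean embedding and a decent pair. By the theorem of Beauville--Donagi, $F$ is cut out in $G(2,6)$ by a regular section of $\mathrm{Sym}^3 \mathcal{S}^*$, where $\mathcal{S}$ is the tautological rank-$2$ subbundle, so the normal bundle $\mathcal{N}_{F/G(2,6)}$ is the restriction of $\mathrm{Sym}^3 \mathcal{S}^*$ and cleanness is automatic. Although $G(2,6)$ has Picard rank $1$, the fourfold $F$ is not a complete intersection of ample divisors, so Proposition \ref{picdecent} does not literally apply; I would instead verify decency directly by checking that the quadratic form $\alpha \mapsto \deg([F] \cdot \alpha \cdot \alpha)$ on $N^2(G(2,6)) \cong \mathbb{Z}\sigma_2 \oplus \mathbb{Z}\sigma_{1,1}$ is positive definite.

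Next, using the splitting principle, I would compute on $G(2,6)$ the three ingredients needed to write down $Q_{F \subset G(2,6)}$: the fundamental class $[F] = c_4(\mathrm{Sym}^3 \mathcal{S}^*)$, the tangent class $c_2(\mathcal{T}_{G(2,6)})$, and the normal class $c_2(\mathrm{Sym}^3 \mathcal{S}^*)$. Unrolling the definition of $Q_{F \subset G(2,6)}$ with these inputs yields an explicit integral quadratic polynomial $Q_{F \subset G(2,6)}(a,b)$ in the coordinates of $\alpha = a\sigma_2 + b\sigma_{1,1}$. Completing the square, I expect the presentation $Q_{F \subset G(2,6)}(\alpha) = (\alpha - \alpha_0)^T M (\alpha - \alpha_0) - 207$, where $M$ is the matrix from the previous step, having $\det M = 891$ and smallest eigenvalue $9(4 - \sqrt{5})$; these are precisely the numerical constants visible in the stated bound.

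Applying Theorem \ref{imain} together with the translation already used in the sextic example --- namely, that $\chi(S,\mathcal{O}_S) \leq r$ forces $\deg\bigl(c_1^2(\mathcal{T}_S) - c_2(\mathcal{T}_S)\bigr) \leq 6r$ via Noether's formula and a Bogomolov--Miyaoka--Yau-type inequality --- reduces the problem to counting lattice points in the ellipse $\{Q_{F \subset G(2,6)} \leq 6r\}$, whose area is $\pi(6r+207)/\sqrt{891}$ and whose semi-major axis is $\sqrt{(6r+207)/(9(4 - \sqrt{5}))}$. The main obstacle is the concluding lattice-point estimate: I would invoke a standard convex-body counting bound, controlling the number of $\mathbb{Z}^2$-points in a planar ellipse by its area plus a boundary correction linear in its semi-major axis, thereby producing the additive terms $8 + 16\sqrt{(6r+207)/(9(4 - \sqrt{5}))}$ and completing the argument.
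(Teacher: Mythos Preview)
Your proposal is correct and follows essentially the same route as the paper: verify cleanness via $\mathcal{N}_{F/G(2,6)} \cong \mathrm{Sym}^3\mathcal{S}^*|_F$, check decency by direct computation of the Gram matrix $\begin{pmatrix}45 & 18\\ 18 & 27\end{pmatrix}$ (with determinant $891$ and smallest eigenvalue $9(4-\sqrt{5})$), write out $Q_{F\subset G(2,6)}$ explicitly, convert $\chi(S,\mathcal{O}_S)\leq r$ into $Q\leq 6r$ via Noether plus Bogomolov--Miyaoka--Yau, and then bound the lattice points in the resulting ellipse by area plus a width-dependent error term (the paper cites Cohn's na\"{i}ve estimate for this last step). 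The only cosmetic difference is that the paper carries out the centering as an explicit affine translation $(u_1,u_2)\mapsto(u_1+\tfrac{5}{2},u_2-\tfrac{3}{2})$ rather than phrasing it as completing the square, and it states the general theorem with parameter $s$ before substituting $s=6r$.
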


\begin{theorem}
Consider the smooth Calabi--Yau fourfold $X \subset \mathbb{P}^4 \times \mathbb{P}^6$ of \cite[\#130]{Gray:2013aa}. Let $r, q \in \mathbb{Z}$. Then at most finitely many elements of $CH^2(X)$ are representable by a smooth surface $S$ in $X$ satisfying $\chi(S, \mathcal{O}_S) \leq r$ and $K_S^2 \geq q$ which arises in $X$ as a generically transverse intersection in $\mathbb{P}^4 \times \mathbb{P}^6$.
\end{theorem}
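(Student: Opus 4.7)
The plan is to apply the framework of Definition \ref{clean}, Proposition \ref{core}, and Theorem \ref{imain} to $X \subset V := \mathbb{P}^4 \times \mathbb{P}^6$, with the caveat that $\rho(V) = 2$, so the decency criterion of Proposition \ref{picdecent} does not apply directly; the hypothesis $K_S^2 \geq q$ plays a compensating role. First, I would verify clean embedding: since the construction of \#130 realizes $X$ as a complete intersection of hypersurfaces in $V$, its normal bundle $\mathcal{N}_{X/V}$ is a direct sum of line bundles pulled back from $V$, and $c_2(\mathcal{N}_{X/V})$ is the restriction of an explicit class in $N^2(V)$. I would then write out $Q_{X \subset V}$ on $N^2(V) \cong \mathbb{Z}^3$ (with basis $h_1^2, h_1 h_2, h_2^2$) via the formula of Proposition \ref{core}, and check decency by explicit computation of the Hankel-type matrix encoding $\alpha \mapsto \deg_V([X] \alpha^2)$ from the multidegree data for \#130.

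Next I would translate the hypotheses into bounds on $Q_{X \subset V}(\alpha)$. Combining Noether's formula $12\chi(\mathcal{O}_S) = K_S^2 + c_2(\mathcal{T}_S)$ with the identity $Q_{X \subset V}(\alpha) = \deg(c_1^2(\mathcal{T}_S) - c_2(\mathcal{T}_S))$ (furnished by Proposition \ref{core} whenever $[S] = i^*\alpha$) yields $Q(\alpha) = 2K_S^2 - 12\chi(\mathcal{O}_S)$, so the hypotheses $\chi(\mathcal{O}_S) \leq r$ and $K_S^2 \geq q$ immediately give $Q(\alpha) \geq 2q - 12r$. For a matching upper bound, the Lefschetz hyperplane theorem applied iteratively (using that $V$ is simply connected, after preliminary deformation of the cutting cycle to a complete intersection of ample divisors in its class) forces $\pi_1(S) = 0$; the Bogomolov--Miyaoka--Yau inequality then gives $K_S^2 \leq 9\chi(\mathcal{O}_S)$, whence $Q(\alpha) = 2K_S^2 - 12\chi \leq 6\chi \leq 6r$. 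Thus $Q(\alpha) \in [2q - 12r, 6r]$ is bounded.

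Granted decency, Theorem \ref{imain} applied with $s := 6r$ yields finitely many numerical equivalence classes in $N^2(X)$ representable in the prescribed way; the lower bound $Q(\alpha) \geq 2q - 12r$, together with the Chow-theoretic refinement of the invariant theory alluded to in the paper, then lifts the finiteness to $CH^2(X)$. The main obstacle is the explicit decency verification through the Hankel matrix, which requires substitution of the precise multidegrees of the defining hypersurfaces of \#130; a secondary subtlety is extending the simple connectedness of $S$ (and hence the applicability of Bogomolov--Miyaoka--Yau) from the case where the cutting cycle is itself a complete intersection of ample divisors in $V$ to arbitrary generically transverse cutting cycles of a given class, which can be handled via a specialization argument.
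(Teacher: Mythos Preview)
Your plan has a genuine gap: the pair $X \subset \mathbb{P}^4 \times \mathbb{P}^6$ is \emph{not} decent. The associated function computed in the paper is
\[
Q_{X \subset \mathbb{P}^4 \times \mathbb{P}^6}(x_1, x_2, x_3) = 16x_1x_2 + 48x_1x_3 + 24x_2^2 + 48x_2x_3 + 8x_3^2 - 72x_1 - 128x_2 - 112x_3,
\]
whose second-order part has no $x_1^2$ term at all; in particular it vanishes along the $x_1$-axis and takes negative values (e.g.\ at $(x_1,x_2,x_3)=(-1,0,1)$). So the ``explicit decency verification through the Hankel matrix'' you anticipate will fail, and Theorem~\ref{imain} with $s = 6r$ gives nothing. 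This is exactly the situation Remarks~\ref{picdecentgen} and~\ref{picdecentopp} predict when $\rho(V) > 1$.

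The paper's argument compensates for the failure of decency in two ways you are missing. First, because $\mathbb{P}^4 \times \mathbb{P}^6$ is toric, every effective codimension-$2$ class lies in $\mathbb{N}^3 \subset \mathbb{Z}^3$; restricting $Q$ to the positive orthant recovers enough growth to make the problem tractable. Second, the lower bound $Q(\alpha) \geq 2q - 12r$ you correctly derive is not a device for ``lifting to $CH^2(X)$'' but is essential to the finiteness itself: one shows that for each fixed integer $p$ the level set $Q(x_1,x_2,x_3) = p$ meets $\mathbb{N}^3$ in finitely many points (by bounding $x_2$ and $x_3$ via the positive diagonal terms, then noting that the residual linear coefficient $16x_2 + 48x_3 - 72$ in $x_1$ never vanishes modulo $16$), and then sums over $p \in [2q - 12r, 6r]$. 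The passage to $CH^2(X)$ is free simply because $CH^2(V) \to N^2(V)$ is an isomorphism here. Finally, your detour through simple connectedness and a specialization argument for BMY is unnecessary: Proposition~\ref{euler} already gives $Q(\alpha) \leq 6\chi(S,\mathcal{O}_S)$ for arbitrary smooth $S$ via minimal models and the classification, with no $\pi_1$ hypothesis.
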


\begin{acknowledgements*}
I would like to thank Claire Voisin for suggesting the problem that originally led to this work, and for many instructive comments. I would like to thank Caterina Consani for her advice. Vincenzo Di Gennaro, Robert Laterveer, John Ottem, Steven Sam, David Savitt, Vyacheslav Shokurov, and an anonymous referee also provided helpful answers and suggestions.\end{acknowledgements*}

\subsection{Notations and terminology}

We adopt notation similar to that of Eisenbud and Harris \cite{Eisenbud:2016aa}.

A \textit{scheme} will be a separated scheme of finite type over $\mathbb{C}$. A \textit{variety} will be an integral projective scheme over $\mathbb{C}$. \textit{Surfaces} and \textit{fourfolds} will be varieties in this sense (of dimensions 2 and 4, respectively), as will be \textit{subvarieties}. Subvarieties and \textit{embeddings} will always be closed. \textit{Smooth} will mean smooth over the base field $\mathbb{C}$, and smoothness will be mentioned explicitly when assumed.

We work primarily with algebraic cycles up to numerical equivalence. We have the groups $N_k(X)$ of $k$-dimensional cycles up to numerical equivalence in a smooth variety $X$, defined say as in Fulton \cite[Def. 19.1]{Fulton:1984aa}. The groups $N_k(X)$ are finitely generated free abelian groups (see \cite[19.3.2. (i)]{Fulton:1984aa}). Identifying $N^k(X) = N_{\text{dim}X - k}(X)$, we have the ring $N^*(X) = \bigoplus_{k = 0}^{\text{dim}X} N^k(X)$ of cycle classes up to numerical equivalence in $X$, introduced for example in \cite[\S C.3.3]{Eisenbud:2016aa}. We write $[S]$ for the cycle class up to numerical equivalence associated to a closed subscheme $S \subset X$, defined say as in \cite[\S 1.2.1]{Eisenbud:2016aa}. We say that subvarieties $A$ and $B$ of $X$ intersect \textit{generically transversally} if at a general point $p$ of each component $C$ of $A \cap B$, $A$, $B$, and $X$ are smooth at $p$ and the tangent spaces $T_pA$ and $T_pB$ at $p$ span $T_pX$ (see \cite[p. 18]{Eisenbud:2016aa}). $N^*(X)$ has a graded ring structure with the property that when subvarieties $A$ and $B$ of $X$ intersect generically transversally, $[A] \cdot [B] = [A \cap B]$.

We consider now a closed embedding of smooth varieties $i \colon X \rightarrow V$. The pushforward homomorphisms on Chow groups defined say in \cite[Def. 1.19]{Eisenbud:2016aa} descend here to numerical equivalence, by say \cite[Ex. 19.1.6]{Fulton:1984aa}, so that we have \textit{pushforward} homomorphisms $i_* \colon N_k(X) \rightarrow N_k(V)$, defined by declaring for any subvariety $A$ of $X$ that $i_* \colon [A] \mapsto [i(A)]$. We also have the \textit{degree} homomorphism $\text{deg} \colon N_0(X) \rightarrow \mathbb{Z}$, defined by assigning $\deg \colon [p] \mapsto 1$ for any closed point $p$ in $X$. The pullback homomorphism on Chow groups defined say in \cite[Thm. 1.23]{Eisenbud:2016aa} descends also to numerical equivalence (see \cite[Ex. 19.2.3]{Fulton:1984aa}), so that we have a \textit{pullback} homomorphism of graded rings $i^* \colon N^*(V) \rightarrow N^*(X)$, which acts by intersection with $X$, in the sense that whenever a subvariety $S' \subset V$ is such that $i^{-1}(S')$ is of the expected codimension and generically reduced, $i^*([S']) = [i^{-1}(S')]$. We finally have the \textit{push-pull formula}, which implies in particular that for any element $\alpha$ of $N^*(V)$, $i_*i^*(\alpha) = [X] \cdot \alpha$ in $N^*(V)$ (see \cite[p. 31]{Eisenbud:2016aa}).

We use the definition of the \textit{total Chern class} $c(\mathcal{E}) = 1 + c_1(\mathcal{E}) + c_2(\mathcal{E}) + \cdots \in N^*(X)$ of a vector bundle $\mathcal{E}$ on a smooth variety $X$ given in \cite[Thm. 5.3]{Eisenbud:2016aa}. Particularly important is the \textit{Whitney sum formula}, which declares that $c(\mathcal{E}) \cdot c(\mathcal{G}) = c(\mathcal{F})$ for any exact sequence of vector bundles $0 \rightarrow \mathcal{E} \rightarrow \mathcal{F} \rightarrow \mathcal{G} \rightarrow 0$ on $X$, as well as the fact that if global sections $\tau_0, \ldots , \tau_{r - k}$ of a bundle $\mathcal{E}$ of rank $r$ on $X$ become linearly dependent on a locus $D$ of codimension $k$ in $X$, then $[D] = c_k(\mathcal{E}) \in N^k(X)$ (see \cite[Thm. 5.3. (c), (b)]{Eisenbud:2016aa}). By \textit{the $k$\textsuperscript{th} Chern class of X} we shall mean the Chern class $c_k \left( \mathcal{T}_X \right)$ of $X$'s tangent bundle.

We use the symbol $\mathbb{N}$ to denote the nonnegative integers $\mathbb{Z}_{\geq 0}$.

That a smooth variety $X$ is \textit{Calabi--Yau} will mean that its canonical bundle $K_X$ is trivial. The equalities $0 = c_1(\mathcal{O}_X) = c_1(K_X) = c_1(\Omega_X) = -c_1(\mathcal{T}_X)$ indicate that a smooth Calabi--Yau variety $X$ has vanishing first Chern class. That a smooth variety $X$ is \textit{hyper-K\"{a}hler} will mean that the vector space $H^{2,0}(X)$ of holomorphic 2-forms on $X$ is generated over $\mathbb{C}$ by a single everywhere-nondegenerate 2-form $\sigma$. A hyper-K\"{a}hler variety $X$ is in particular Calabi--Yau, as the top exterior power of the 2-form $\sigma$ trivializes its canonical bundle.

\section{Results}

\subsection{A new invariant}

We describe a new invariant for smooth surfaces $S$ in a smooth fourfold $X$ with vanishing first Chern class. The vanishing of $c_1 \left( \mathcal{T}_X \right)$ relates the self-intersection number of a smooth surface $S \subset X$ to an expression which depends on $S$ alone and not on its embedding in $X$, in the sense that the only terms in the expression which involve $S$'s Chern classes are its Chern numbers.

\begin{proposition} \label{invariant}
The value of the Chern number expression $\emph{deg} \left( c_1^2(\mathcal{T}_S) - c_2(\mathcal{T}_S) \right)$ depends only on a smooth surface $S$ in $X$'s numerical equivalence class.
\end{proposition}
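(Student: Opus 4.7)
The plan is to use the normal bundle sequence to trade Chern classes of $\mathcal{T}_S$ for intersection-theoretic data on $X$, and then check that what remains depends only on $[S] \in N^2(X)$.

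Concretely, I would begin with the short exact sequence of bundles on $S$,
\[
0 \to \mathcal{T}_S \to \mathcal{T}_X|_S \to \mathcal{N}_{S/X} \to 0,
\]
and apply the Whitney sum formula to get $c(\mathcal{T}_S) \cdot c(\mathcal{N}_{S/X}) = c(\mathcal{T}_X|_S)$. Extracting the first-degree part and using the hypothesis $c_1(\mathcal{T}_X) = 0$ (which restricts to $c_1(\mathcal{T}_X|_S) = 0$) gives $c_1(\mathcal{N}_{S/X}) = -c_1(\mathcal{T}_S)$. The degree-two part then reads
\[
c_2(\mathcal{T}_S) + c_1(\mathcal{T}_S) \cdot c_1(\mathcal{N}_{S/X}) + c_2(\mathcal{N}_{S/X}) = c_2(\mathcal{T}_X|_S),
\]
and substituting the relation above yields the key identity
\[
c_1^2(\mathcal{T}_S) - c_2(\mathcal{T}_S) = c_2(\mathcal{N}_{S/X}) - c_2(\mathcal{T}_X|_S)
\]
in $N^2(S) = N_0(S)$.

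Next I would reinterpret both terms on the right as pullbacks from $X$. For the second, $c_2(\mathcal{T}_X|_S) = i^* c_2(\mathcal{T}_X)$ is immediate. For the first, the self-intersection formula gives $c_2(\mathcal{N}_{S/X}) = i^*[S]$, where $i \colon S \hookrightarrow X$ is the inclusion; this is the standard identification of the top Chern class of the normal bundle with the restriction of the class of $S$ to itself. Applying $\deg_S$ and then the push-pull formula $\deg_S \circ \, i^* = \deg_X \circ \, i_* i^* = \deg_X(\,\cdot\, \cap [S])$ on each term produces
\[
\deg\bigl(c_1^2(\mathcal{T}_S) - c_2(\mathcal{T}_S)\bigr) = \deg_X\bigl([S]^2\bigr) - \deg_X\bigl([S] \cdot c_2(\mathcal{T}_X)\bigr).
\]

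Both summands on the right-hand side are expressed purely as intersection numbers in $X$ involving $[S] \in N^2(X)$ and the fixed class $c_2(\mathcal{T}_X) \in N^2(X)$; since numerical equivalence is by definition compatible with the degree pairing on top-dimensional intersection products, these numbers depend only on the numerical equivalence class of $S$ in $X$, proving the proposition. There is no real obstacle here: the only point requiring care is the self-intersection formula step, which must be invoked in the form that descends to $N^*$ (available via \cite[Ex.\ 19.2.3]{Fulton:1984aa}), but the rest is a two-line Whitney sum computation made clean by the vanishing of $c_1(\mathcal{T}_X)$.
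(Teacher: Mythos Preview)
Your proof is correct and follows essentially the same route as the paper: both use the normal bundle sequence with Whitney sum and the vanishing of $c_1(\mathcal{T}_X)$ to relate $c_1^2(\mathcal{T}_S) - c_2(\mathcal{T}_S)$ to $c_2(\mathcal{N}_{S/X})$ and $c_2(\mathcal{T}_X|_S)$, then invoke the self-intersection formula and push-pull to arrive at the identity $\deg\bigl(c_1^2(\mathcal{T}_S) - c_2(\mathcal{T}_S)\bigr) = \deg([S]\cdot[S]) - \deg([S]\cdot c_2(\mathcal{T}_X))$. The only cosmetic difference is that you solve the degree-two Whitney relation for $c_1^2 - c_2$ first and then apply the self-intersection formula in the form $c_2(\mathcal{N}_{S/X}) = i^*[S]$, whereas the paper applies the self-intersection formula in the pushforward form $\deg([S]\cdot[S]) = \deg\bigl(j_*c_2(\mathcal{N}_{S/X})\bigr)$ and then substitutes; these are equivalent rearrangements of the same computation.
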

\begin{proof}
We use the self-intersection formula of Mumford (see Hartshorne \cite[\S A. 3. C7]{Hartshorne:1977aa}). We denote by $j$ the inclusion of $S$ into $X$.

The normal bundle exact sequence of $S$ in $X$ yields the following formulae for the Chern classes of $\mathcal{N}_{S/X}$, where in (\ref{adjunction1}) we use the vanishing of $c_1 \left( \mathcal{T}_X \right)$:
\begin{align}
c_1 \left( \mathcal{N}_{S/X} \right) &= c_1 \left( \left. \mathcal{T}_X \right|_S \right) - c_1 \left( \mathcal{T}_S \right) = - c_1 \left( \mathcal{T}_S \right), \label{adjunction1} \\
c_2 \left( \mathcal{N}_{S/X} \right) &= c_2 \left( \left. \mathcal{T}_X \right|_S \right) - c_2 \left( \mathcal{T}_S \right) - c_1 \left( \mathcal{T}_S \right) \cdot c_1 \left( \mathcal{N}_{S/X} \right) = c_2 \left( \left. \mathcal{T}_X \right|_S \right) + c_1^2 \left( \mathcal{T}_S \right) - c_2 \left( \mathcal{T}_S \right). \label{adjunction2}
\end{align}
The self-intersection formula now gives:
\begin{align*}
\text{deg} \left( [S] \cdot [S] \right) &= \text{deg} \left( j_* \left( c_2 \left( \mathcal{N}_{S/X} \right) \right) \right) &\text{(by the self-intersection formula)} \\
&= \text{deg} \left( j_* j^* \left( c_2 \left( \mathcal{T}_X \right) \right) \right) + \text{deg} \left( c_1^2 \left( \mathcal{T}_S \right) - c_2 \left( \mathcal{T}_S \right) \right) &\text{(by equation (\ref{adjunction2}) above)} \\
&= \text{deg} \left( [S] \cdot c_2 \left( \mathcal{T}_X \right) \right) + \text{deg} \left( c_1^2 \left( \mathcal{T}_S \right) - c_2 \left( \mathcal{T}_S \right) \right). &\text{(by the push-pull formula)}
\end{align*}
We thus establish the equality:
\begin{equation*}\text{deg} \left( c_1^2 \left( \mathcal{T}_S \right) - c_2 \left( \mathcal{T}_S \right) \right) = \text{deg} \left( [S] \cdot [S] \right) - \text{deg} \left( [S] \cdot c_2 \left( \mathcal{T}_X \right) \right).\end{equation*}
This equation's right-hand side depends only on $S$'s numerical equivalence class in $X$.
\end{proof}

\begin{remark}
In fact, an analogous invariant exists for the smooth half-dimensional subvarieties $S$ in a smooth variety $X$ of any even dimension $2d$, provided that $X$'s first $d - 1$ Chern classes vanish. We record the resulting invariant expressions for various low values of $d$:
\begin{enumerate}
\item $\text{deg} \left( -c_1(\mathcal{T}_S) \right),$
\item $\text{deg} \left( c_1^2(\mathcal{T}_S) - c_2(\mathcal{T}_S) \right),$
\item $\text{deg} \left( - c_1^3(\mathcal{T}_S) + 2c_1c_2(\mathcal{T}_S) - c_3(\mathcal{T}_S) \right),$
\item $\text{deg} \left( c_1^4(\mathcal{T}_S) - 3c_1^2c_2(\mathcal{T}_S) + 2c_1c_3(\mathcal{T}_S) + c_2^2(\mathcal{T}_S) - c_4(\mathcal{T}_S) \right).$
\end{enumerate}
These hold, for example, if $X$ is say an abelian variety, all of whose positive Chern classes necessarily vanish (see Mumford \cite[\S 4, Ques. 4. (iii)]{Mumford:1985aa}).

We decline to pursue this additional direction in what follows.
\end{remark}

\subsection{Ambient surfaces and associated functions}

In practice, the invariant of Proposition \ref{invariant} is computed by embedding $X$ into an ambient variety $V$.

Indeed, though the invariance of the expression $\text{deg} \left( c_1^2(\mathcal{T}_S) - c_2(\mathcal{T}_S) \right)$ up to numerical equivalence is established in the absence of an ambient variety, the computation of the value of this invariant on any particular smooth surface $S$ in $X$ is feasible only when the relevant intersection-theoretic calculations can be outsourced to a variety $V$ whose intersection ring is completely understood. (This requirement typically goes unmet by the smooth fourfold $X$ itself.) We develop this theory in what follows.

We introduce a key technical condition on embeddings:

\begin{definition} \label{clean}
Let $X$ be a smooth fourfold with vanishing first Chern class, embedded in a smooth variety $V$. We will say that the embedding of $X$ in $V$ is \textit{clean} if the second Chern class $c_2 \left( \mathcal{N}_{X/V} \right)$ of the normal bundle of $X$ in $V$ is the restriction to $X$ of a cycle class on $V$.
\end{definition}

For example, $X$ is cleanly embedded in $V$ if any of the following is true:
\begin{enumerate}
\item \label{clean1} The normal bundle of $X$ in $V$ is the restriction to $X$ of a bundle on $V$. (We use $c_2 \left( \mathcal{N}_{X/V} \right) = c_2(\left. \mathcal{E} \right|_X) = i^*(c_2(\mathcal{E}))$.)
\item \label{clean2} $X$ is a complete intersection in $V$. (\ref{clean1}. above holds in this case by the adjunction formula.)
\item \label{clean3} $X$ is defined in $V$ as the zero locus of expected dimension of a map between vector bundles on $V$. (This generalization of \ref{clean2}. above appears in, for example, Harris and Tu \cite[\S 3]{Harris:1984aa}.)
\item $\text{codim}_V(X) \leq 2$. (If $\text{codim}_V(X) = 0$ this is trivial; if $\text{codim}_V(X) = 1$ then $\mathcal{N}_{X/V}$ is a line bundle and $c_2 \left( \mathcal{N}_{X/V} \right) = 0$; if $\text{codim}_V(X) = 2$ then we use $c_2 \left( \mathcal{N}_{X/V} \right) = i^*(i_*([X])) = i^*([X])$ (see \cite[\S A. 3. C7]{Hartshorne:1977aa}).)
\item $X$ is an abelian variety. (In this case each $c_k(\mathcal{T}_X) = 0$ (see \cite[\S 4, Ques. 4. (iii)]{Mumford:1985aa}), and the normal bundle exact sequence of $X$ in $V$ shows that $c_2 \left( \mathcal{N}_{X/V} \right) = c_2 \left( \left. \mathcal{T}_V \right|_X \right) = i^* \left( c_2 \left( \mathcal{T}_V \right) \right)$.)
\end{enumerate}

\begin{remark}
Unfortunately, it appears that an arbitrary degeneracy locus of expected dimension (that is, one defined by a rank condition which is not that of zero rank) is not in general embedded cleanly. Though constructing a concrete example appears difficult, we observe analogous behavior in Segre varieties (see \cite[(1.8) Rem.]{Harris:1984aa}).
\end{remark}

We define a class of surfaces to which our theory applies:

\begin{definition} \label{ambient}
Let $X$ be embedded cleanly in $V$, and denote by $i$ the inclusion. We shall say that a surface $S$ in $X$ is \textit{ambient in $V$}, or \textit{ambient}, if $[S] = i^*(\alpha)$ for some cycle class $\alpha \in N^2(V)$.
\end{definition}

For example, a surface $S \subset X$ is ambient if any of the following is true:
\begin{enumerate}
\item \label{ambient1} $S = i^{-1}(S')$ for some subvariety $S'$ of codimension 2 in $V$. (In this case $i^{-1}(S')$ is of the expected codimension and generically reduced, and we use \cite[Thm. 1.23. (a)]{Eisenbud:2016aa}.)
\item \label{ambient2} $S$ is the dependency locus of the expected codimension 2 of sections $\tau_0, \ldots , \tau_{r - 2}$ of a rank-$r$ bundle $\left. \mathcal{E} \right|_X$ on $X$ which is the restriction to $X$ of a bundle on $V$. (This follows from $[S] = c_2(\left. \mathcal{E} \right|_X) = i^*(c_2(\mathcal{E}))$.)
\end{enumerate}

\begin{remark}
Such $S$ are in fact ambient even over rational equivalence (see Definition \ref{eambient} below).
\end{remark}

An example of a non-ambient surface is given in Example \ref{nonambient} below; the demonstration that this surface is not ambient, however, relies on the tools developed in this section.

We have the following property of clean embeddings $X \subset V$:

\begin{lemma} \label{restriction}
Let $X \subset V$ be a clean embedding. Then the Chern class $c_2 \left( \mathcal{T}_X \right) \in N^2(X)$ is the restriction to $X$ of a cycle class on $V$.
\end{lemma}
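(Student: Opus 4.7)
The plan is to extract $c_2(\mathcal{T}_X)$ from the normal bundle exact sequence
\begin{equation*}
0 \to \mathcal{T}_X \to \left.\mathcal{T}_V\right|_X \to \mathcal{N}_{X/V} \to 0,
\end{equation*}
and to exhibit it as a difference of two classes, each of which is manifestly the restriction to $X$ of something defined on $V$. Applying Whitney to this sequence and looking at the degree-$2$ component, I get
\begin{equation*}
c_2\left(\left.\mathcal{T}_V\right|_X\right) = c_2(\mathcal{T}_X) + c_1(\mathcal{T}_X)\cdot c_1(\mathcal{N}_{X/V}) + c_2(\mathcal{N}_{X/V}).
\end{equation*}
The hypothesis that $X$ has vanishing first Chern class kills the cross term, so I rearrange to obtain
\begin{equation*}
c_2(\mathcal{T}_X) = c_2\left(\left.\mathcal{T}_V\right|_X\right) - c_2(\mathcal{N}_{X/V}).
\end{equation*}

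Now I just need to argue each summand on the right is in the image of $i^*\colon N^2(V) \to N^2(X)$. The first term is $i^*(c_2(\mathcal{T}_V))$ by functoriality of Chern classes under pullback (the restriction of the tangent bundle is its pullback along $i$). The second term is the restriction of a class on $V$ by the very definition of a clean embedding (Definition \ref{clean}). Since the image of $i^*$ is a subgroup of $N^2(X)$, the difference is again a restriction, which is what was to be shown.

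There is no real obstacle here: once the normal bundle sequence and the vanishing of $c_1(\mathcal{T}_X)$ are invoked, the argument is a one-line bookkeeping of the Whitney sum formula. The lemma's content is really that the clean hypothesis, a priori only about the normal bundle, automatically transfers to the tangent bundle of $X$ in the presence of the Calabi--Yau-type condition $c_1(\mathcal{T}_X) = 0$; it is precisely this vanishing which eliminates the term $c_1(\mathcal{T}_X)\cdot c_1(\mathcal{N}_{X/V})$ that would otherwise obstruct the conclusion (since $c_1(\mathcal{N}_{X/V})$ need not itself lie in the image of $i^*$).
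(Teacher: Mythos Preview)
Your proof is correct and follows essentially the same route as the paper: both use the Whitney formula on the normal bundle sequence, invoke $c_1(\mathcal{T}_X)=0$ to kill the cross term, and conclude via functoriality of Chern classes together with the definition of a clean embedding. One small quibble with your closing parenthetical: the obstruction in the cross term $c_1(\mathcal{T}_X)\cdot c_1(\mathcal{N}_{X/V})$ is really $c_1(\mathcal{T}_X)$ rather than $c_1(\mathcal{N}_{X/V})$ (indeed, once $c_1(\mathcal{T}_X)=0$ the normal bundle sequence forces $c_1(\mathcal{N}_{X/V})=i^*c_1(\mathcal{T}_V)$, which \emph{is} a restriction), but this does not affect the argument.
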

\begin{proof} The normal bundle sequence of $X$ in $V$ gives that
\begin{align*}
c_2 \left( \mathcal{T}_X \right) &= c_2 \left( \left. \mathcal{T}_V \right|_X \right) - c_1 \left( \mathcal{T}_X \right) \cdot c_1 \left( \mathcal{N}_{X/V} \right) - c_2 \left( \mathcal{N}_{X/V} \right) \\
&= i^* \left( c_2 \left( \mathcal{T}_V \right) \right) - c_2 \left( \mathcal{N}_{X/V} \right).
\end{align*}
The cleanness of $X \subset V$ asserts that the right-hand term is the restriction to $X$ of a cycle on $V$.
\end{proof}

We will decline to distinguish, in this situation, between $c_2 \left( \mathcal{T}_X \right)$ and the cycle class in $N^2(V)$ which restricts to it.

By Lemma \ref{restriction} above, the following definition makes sense:

\begin{definition} \label{associated}
Let $X$ be a smooth fourfold with vanishing first Chern class, embedded cleanly in a smooth variety $V$. We define the \textit{function associated to the embedding} $Q_{X \subset V} \colon N^2(X) \rightarrow \mathbb{Z}$ by associating to any cycle class $\alpha \in N^2(V)$ the intersection number:
\begin{equation*}Q_{X \subset V}(\alpha) := \text{deg} \left( [X] \cdot \alpha \cdot \alpha - [X] \cdot \alpha \cdot c_2 \left( \mathcal{T}_{X} \right) \right).\end{equation*}
\end{definition}

The following is the technical core of the paper:

\begin{proposition} \label{core}
Suppose that a smooth surface $S \subset X$ is ambient, with $[S] = i^*(\alpha)$. Then
\begin{equation*}Q_{X \subset V}(\alpha) = \emph{deg} \left( c_1^2(\mathcal{T}_S) - c_2(\mathcal{T}_S) \right).\end{equation*}
\end{proposition}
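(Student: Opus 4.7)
The plan is to reduce the claim directly to the identity established in the proof of Proposition \ref{invariant}, namely
\begin{equation*}
\deg\left(c_1^2(\mathcal{T}_S) - c_2(\mathcal{T}_S)\right) = \deg\left([S]\cdot[S]\right) - \deg\left([S]\cdot c_2(\mathcal{T}_X)\right),
\end{equation*}
and then translate both intersection numbers on the right-hand side from $X$ to $V$ using the hypothesis $[S] = i^*(\alpha)$ together with the push-pull formula.

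First I would rewrite the self-intersection term. Since $i^*$ is a ring homomorphism on $N^*$, $[S]\cdot[S] = i^*(\alpha)\cdot i^*(\alpha) = i^*(\alpha\cdot\alpha)$ in $N^4(X)$. Applying $i_*$ and using push-pull, $i_* i^*(\alpha\cdot\alpha) = [X]\cdot\alpha\cdot\alpha$ in $N^*(V)$, and since pushforward preserves degree,
\begin{equation*}
\deg\left([S]\cdot[S]\right) = \deg\left([X]\cdot\alpha\cdot\alpha\right).
\end{equation*}

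Next I would handle the $c_2(\mathcal{T}_X)$ term. This is where cleanness enters: by Lemma \ref{restriction}, $c_2(\mathcal{T}_X) = i^*(\gamma)$ for some $\gamma \in N^2(V)$, which the paper agrees to denote still by $c_2(\mathcal{T}_X)$. Then
\begin{equation*}
[S]\cdot c_2(\mathcal{T}_X) = i^*(\alpha)\cdot i^*(\gamma) = i^*(\alpha\cdot\gamma),
\end{equation*}
so by the same push-pull argument
\begin{equation*}
\deg\left([S]\cdot c_2(\mathcal{T}_X)\right) = \deg\left([X]\cdot\alpha\cdot c_2(\mathcal{T}_X)\right).
\end{equation*}
Subtracting yields exactly the expression defining $Q_{X\subset V}(\alpha)$ in Definition \ref{associated}.

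The proof is essentially a bookkeeping exercise once Proposition \ref{invariant} and Lemma \ref{restriction} are in hand; no real obstacle arises. The only point requiring a moment of care is the legitimacy of the identification of $c_2(\mathcal{T}_X) \in N^2(X)$ with a chosen lift $\gamma \in N^2(V)$—one must confirm that the quantity $\deg\left([X]\cdot\alpha\cdot c_2(\mathcal{T}_X)\right)$ does not depend on which lift is chosen, but this is automatic because the expression only involves $\gamma$ through $[X]\cdot\gamma = i_*i^*(\gamma) = i_*(c_2(\mathcal{T}_X))$, which is intrinsic to $X$.
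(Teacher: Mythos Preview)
Your proof is correct and follows essentially the same route as the paper's: both compute $\deg([S]\cdot[S])$ and $\deg([S]\cdot c_2(\mathcal{T}_X))$ separately, push each out to $V$ via $[S]=i^*(\alpha)$, Lemma~\ref{restriction}, and the push-pull formula, and then subtract. The only cosmetic difference is that the paper re-derives the identity from Proposition~\ref{invariant} inline whereas you invoke it directly; your remark on the independence of the choice of lift $\gamma$ is a welcome clarification the paper leaves implicit.
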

\begin{proof}
We expand upon the proof of Proposition \ref{invariant}. We again denote by $j$ the inclusion of $S$ into $X$.

The self-intersection formula and several applications of the push-pull formula give:
\begin{align*}
\text{deg} \left( [S] \cdot [S] \right) &= \text{deg} \left( j_* \left( c_2 \left( \mathcal{N}_{S/X} \right) \right) \right) &\text{(by the self-intersection formula)} \\
&= \text{deg} \left( j_* j^* \left( c_2 \left( \mathcal{T}_X \right) \right) \right) + \text{deg} \left( c_1^2 \left( \mathcal{T}_S \right) - c_2 \left( \mathcal{T}_S \right) \right) &\text{(by equation (\ref{adjunction2}) above)} \\
&= \text{deg} \left( [S] \cdot c_2 \left( \mathcal{T}_X \right) \right) + \text{deg} \left( c_1^2 \left( \mathcal{T}_S \right) - c_2 \left( \mathcal{T}_S \right) \right) &\text{(by the push-pull formula)} \\
&= \text{deg} \left( i_* i^* \left( \alpha \cdot c_2 \left( \mathcal{T}_X \right) \right) \right) + \text{deg} \left( c_1^2 \left( \mathcal{T}_S \right) - c_2 \left( \mathcal{T}_S \right) \right) &\text{(by $[S] = i^*(\alpha)$ and Lemma \ref{restriction})} \\
&= \text{deg} \left( [X] \cdot \alpha \cdot c_2 \left( \mathcal{T}_X \right) \right) + \text{deg} \left( c_1^2 \left( \mathcal{T}_S \right) - c_2 \left( \mathcal{T}_S \right) \right). &\text{(by the push-pull formula)}
\intertext{On the other hand, we also have:}
\text{deg} \left( [S] \cdot [S] \right) &= \text{deg} \left( i_* i^* (\alpha \cdot \alpha) \right) &\text{(using $[S] = i^*(\alpha)$)} \\
&= \text{deg} \left( [X] \cdot \alpha \cdot \alpha \right). &\text{(by the push-pull formula)}
\end{align*}
The concluding lines of the above two calculations complete the proof, by definition of $Q_{X \subset V}$.
\end{proof}

\subsection{Background in the theory of smooth surfaces}

We recall notions from the well-established theory of smooth surfaces. We refer to the text of Barth, Hulek, Peters, and Van de Ven \cite{Barth:2004aa}. In particular, we recall Noether's formula (see the case $n = 2$ following \cite[I, (5.5) Thm.]{Barth:2004aa}), the Gauss--Bonnet theorem (see \cite[p. 23]{Barth:2004aa}), and the existence of minimal models (see \cite[III, (4.5) Thm.]{Barth:2004aa}).

We let $S$ be a smooth surface in what follows.

\begin{lemma} \label{blow}
Consider the blowing-up $\sigma \colon \bar{S} \rightarrow S$ of $S$ at a point. Then
\begin{equation*}\emph{deg} \left( c_1^2(\mathcal{T}_S) - c_2(\mathcal{T}_S) \right) = \emph{deg} \left( c_1^2(\mathcal{T}_{\bar{S}}) - c_2(\mathcal{T}_{\bar{S}}) \right) + 2.\end{equation*}
\end{lemma}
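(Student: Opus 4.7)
The plan is to reduce both sides to topological invariants of the surface that behave well under blowing up. By definition, $c_1(\mathcal{T}_S) = -K_S$, so $\deg(c_1^2(\mathcal{T}_S)) = K_S^2$; and by the Gauss--Bonnet theorem cited in the paper, $\deg(c_2(\mathcal{T}_S)) = \chi_{\mathrm{top}}(S)$. Hence the quantity in question is simply $K_S^2 - \chi_{\mathrm{top}}(S)$, and likewise for $\bar S$. The lemma will follow once I check how each of these two ingredients transforms under the blow-up $\sigma \colon \bar S \to S$ at a point.

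First I would recall the standard blow-up formulas. Let $E \subset \bar S$ be the exceptional divisor. Then $K_{\bar S} = \sigma^* K_S + E$, and since $\sigma^* K_S \cdot E = 0$ and $E^2 = -1$, one computes $K_{\bar S}^2 = K_S^2 - 1$. On the topological side, blowing up replaces a point by a copy of $\mathbb{P}^1$, which raises the Euler characteristic by $\chi_{\mathrm{top}}(\mathbb{P}^1) - 1 = 1$, so $\chi_{\mathrm{top}}(\bar S) = \chi_{\mathrm{top}}(S) + 1$.

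Putting the two facts together,
\begin{equation*}
\deg\bigl(c_1^2(\mathcal{T}_{\bar S}) - c_2(\mathcal{T}_{\bar S})\bigr) = K_{\bar S}^2 - \chi_{\mathrm{top}}(\bar S) = (K_S^2 - 1) - (\chi_{\mathrm{top}}(S) + 1) = \deg\bigl(c_1^2(\mathcal{T}_S) - c_2(\mathcal{T}_S)\bigr) - 2,
\end{equation*}
which is the claimed identity after rearrangement.

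There is no real obstacle here; the content is entirely in the two classical blow-up identities $K_{\bar S}^2 = K_S^2 - 1$ and $\chi_{\mathrm{top}}(\bar S) = \chi_{\mathrm{top}}(S) + 1$, both of which are standard facts available from the reference \cite{Barth:2004aa} invoked just before the statement. The only care needed is to make the translation between the Chern-number expression used in the paper and the invariants $K_S^2$ and $\chi_{\mathrm{top}}(S)$ via $c_1(\mathcal{T}_S) = -K_S$ and Gauss--Bonnet explicit.
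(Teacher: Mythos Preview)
Your proof is correct. It differs from the paper's in how you handle the $c_1^2$ term. You compute the change in $K_S^2$ directly from the canonical divisor formula $K_{\bar S} = \sigma^*K_S + E$ together with $E^2 = -1$. The paper instead invokes the invariance of the holomorphic Euler characteristic under blow-up (the isomorphisms $\sigma^*\colon H^i(S,\mathcal{O}_S)\to H^i(\bar S,\mathcal{O}_{\bar S})$) and then applies Noether's formula to conclude that $\deg(c_1^2 + c_2)$ is unchanged; combining this with the Gauss--Bonnet step (which you share) yields the change in $c_1^2 - c_2$. Your route is slightly more elementary, avoiding Noether's formula altogether; the paper's route is consistent with the emphasis placed on Noether's formula in the surrounding section and reuses facts (invariance of $\chi(\mathcal{O})$) that the paper has just cited. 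Both are standard and essentially equivalent in depth.
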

\begin{proof}
From the isomorphisms $\sigma^* \colon H^i(S, \mathcal{O}_S) \rightarrow H^i(\bar{S}, \mathcal{O}_{\bar{S}})$ (see \cite[I, (9.1) Thm. (iii)]{Barth:2004aa}) and Noether's formula, it follows that $\text{deg} \left( c_1^2(\mathcal{T}_{\bar{S}}) + c_2(\mathcal{T}_{\bar{S}}) \right) = \text{deg} \left( c_1^2(\mathcal{T}_S) + c_2(\mathcal{T}_S) \right)$. The Gauss--Bonnet formula and the result of \cite[I, (9.1) Thm. (iv)]{Barth:2004aa} demonstrate that $\text{deg} \left( c_2(\mathcal{T}_{\bar{S}}) \right) = \text{deg} \left( c_2(\mathcal{T}_S) \right) + 1$. Combining these two observations completes the proof.
\end{proof}

\begin{proposition} \label{nongeneral}
Suppose that $S$ is not of general type. Then $\emph{deg} \left( c_1^2(\mathcal{T}_S) - c_2(\mathcal{T}_S) \right) \leq 6$.
\end{proposition}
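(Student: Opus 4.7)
The plan is to reduce to the minimal case via Lemma \ref{blow} and then invoke the Enriques--Kodaira classification. Throughout, I identify $c_1^2(\mathcal{T}_S) = K_S^2$ (using $c_1(\mathcal{T}_S) = -K_S$) and $\deg c_2(\mathcal{T}_S) = \chi_{\mathrm{top}}(S)$ (by Gauss--Bonnet), so that the invariant in question is simply $K_S^2 - \chi_{\mathrm{top}}(S)$.

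First I would pass to a minimal model. Every smooth surface $S$ dominates a minimal model $S_{\min}$ by a finite sequence of blowups at points, and being non-general type is a birational invariant, so $S_{\min}$ is also not of general type. By Lemma \ref{blow}, each blowup decreases $\deg(c_1^2(\mathcal{T}_S) - c_2(\mathcal{T}_S))$ by $2$, whence the invariant on $S$ is bounded above by that on $S_{\min}$. It therefore suffices to treat the case where $S$ itself is minimal.

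Next I would split by Kodaira dimension $\kappa(S) \in \{-\infty, 0, 1\}$, invoking the classification in each case. If $\kappa(S) = -\infty$, then $S$ is either $\mathbb{P}^2$, for which a direct Chern-class computation gives the invariant value $9 - 3 = 6$, or a geometrically ruled surface $\mathbb{P}(\mathcal{E}) \to C$ over a smooth curve of genus $g$, for which the standard formulas $K_S^2 = 8(1 - g)$ and $\chi_{\mathrm{top}}(S) = 4(1 - g)$ give the invariant value $4(1 - g) \leq 4$. If $\kappa(S) \geq 0$, then $S$ minimal and not of general type forces $K_S^2 = 0$ (the defining feature of these classes in the classification), so the invariant reduces to $-\chi_{\mathrm{top}}(S)$; Noether's formula then gives $\chi_{\mathrm{top}}(S) = 12\chi(\mathcal{O}_S)$, and $\chi(\mathcal{O}_S) \geq 0$ is a standard consequence of $\kappa \geq 0$ on a minimal surface, so the invariant is nonpositive. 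Hence the maximum over all minimal non-general-type surfaces is $6$, attained only at $\mathbb{P}^2$.

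The main obstacle is the classification-theoretic input, specifically that a minimal surface with $\kappa \geq 0$ has $K_S^2 = 0$ and $\chi(\mathcal{O}_S) \geq 0$. Both facts are classical and covered by the Barth--Hulek--Peters--Van de Ven reference already in use, though they unfold through a case-by-case analysis over K3, Enriques, abelian, bielliptic, and properly elliptic surfaces. Once the classification is granted, the verification of the bound in each case reduces to a routine Chern-number calculation, and the reduction via Lemma \ref{blow} extends the bound to all non-minimal $S$.
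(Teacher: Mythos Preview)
Your proposal is correct and follows essentially the same approach as the paper: reduce to a minimal model via Lemma \ref{blow}, then read off the bound from the Enriques--Kodaira classification of minimal surfaces. The paper simply cites \cite[VI, (1.1) Thm.]{Barth:2004aa} for this last step, whereas you have spelled out the case analysis explicitly.
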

\begin{proof}
By Lemma \ref{blow} and the result of \cite[I, (9.1) Thm. (viii)]{Barth:2004aa}, we may assume that $S$ is minimal. The result then follows from the classification of minimal surfaces, say as in \cite[VI, (1.1) Thm.]{Barth:2004aa}.
\end{proof}

\begin{proposition} \label{euler}
Suppose that $S$ satisfies $\chi(S, \mathcal{O}_S) = r$. Then $\emph{deg} \left( c_1^2(\mathcal{T}_S) - c_2(\mathcal{T}_S) \right) \leq 6r$.
\end{proposition}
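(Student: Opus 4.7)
The plan is to reduce to a minimal model of $S$ and then split by whether or not $S$ is of general type. By Lemma \ref{blow}, each blow-up of a point on a surface strictly decreases $\deg(c_1^2(\mathcal{T}_S) - c_2(\mathcal{T}_S))$ by $2$, so the value of this invariant on any minimal model $S_{\min}$ of $S$ is weakly larger than the value on $S$. Since $\chi(\mathcal{O}_S) = r$ is a birational invariant of smooth projective surfaces (each $h^i(S, \mathcal{O}_S)$ is), the value $r$ is preserved under this reduction, and so it suffices to establish the inequality $\deg(c_1^2(\mathcal{T}_S) - c_2(\mathcal{T}_S)) \leq 6r$ in the case that $S$ is minimal.

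When $S$ is minimal and not of general type, Proposition \ref{nongeneral} already yields $\deg(c_1^2(\mathcal{T}_S) - c_2(\mathcal{T}_S)) \leq 6$, which delivers the desired conclusion in the regime $r \geq 1$. When $S$ is minimal and of general type, I would invoke the Bogomolov--Miyaoka--Yau inequality, which in degree form reads $\deg c_1^2(\mathcal{T}_S) \leq 3 \deg c_2(\mathcal{T}_S)$. I would combine this with Noether's formula $\deg c_1^2(\mathcal{T}_S) + \deg c_2(\mathcal{T}_S) = 12r$ by substituting $\deg c_2(\mathcal{T}_S) = 12r - \deg c_1^2(\mathcal{T}_S)$ into the BMY bound, obtaining first the intermediate inequality $\deg c_1^2(\mathcal{T}_S) \leq 9r$ and then
\begin{equation*}
\deg(c_1^2(\mathcal{T}_S) - c_2(\mathcal{T}_S)) = 2 \deg c_1^2(\mathcal{T}_S) - 12r \leq 18r - 12r = 6r,
\end{equation*}
as required. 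Since BMY applied to a minimal surface of general type in particular forces $\deg c_2(\mathcal{T}_S) > 0$ and $\deg c_1^2(\mathcal{T}_S) > 0$, one also deduces $r \geq 1$ in the general-type case, so that the two case-specific bounds assemble consistently for the relevant range of $r$.

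The principal non-elementary input is the Bogomolov--Miyaoka--Yau inequality, which I would cite from the literature rather than reprove. Once it is admitted, the remaining work is purely algebraic manipulation of two classical surface-theoretic identities (Noether plus BMY) together with the birational reduction supplied by Lemma \ref{blow}. I expect the case split between general and non-general type to be the only structural subtlety, with the general-type case carrying the essential content and the non-general-type case handled by the previous proposition.
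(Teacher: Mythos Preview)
Your overall strategy coincides with the paper's: pass to a minimal model via Lemma \ref{blow}, split on whether $S$ is of general type, and in the general-type case combine Bogomolov--Miyaoka--Yau with Noether. Your algebra there is correct and amounts to the same computation the paper carries out.

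The gap is in the non-general-type branch. You invoke Proposition \ref{nongeneral}, which yields only $\deg(c_1^2(\mathcal{T}_S) - c_2(\mathcal{T}_S)) \leq 6$; as you yourself note, this gives $\leq 6r$ only when $r \geq 1$. Observing that the general-type case forces $r \geq 1$ does not close the argument, because there remain minimal surfaces \emph{not} of general type with $\chi \leq 0$ (abelian, bielliptic, ruled over curves of genus $\geq 1$), and for these the bound $\leq 6$ is strictly weaker than the claimed $\leq 6r$. The paper handles this branch not by quoting Proposition \ref{nongeneral} but by appealing directly to the explicit Enriques--Kodaira classification and checking each class. If you carry out that check you will find that every class with $\chi \leq 0$ passes except one: a minimal ruled surface over a curve of genus $g \geq 2$ has $\chi = 1-g$ and $c_1^2 - c_2 = 4(1-g)$, so that $c_1^2 - c_2 = 4\chi > 6\chi$ there. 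The stated inequality thus appears to fail in this corner case; for the downstream applications (Corollary \ref{numeuler} and its variants) it is harmless to replace the bound by $\max(6r,6)$, but you should be aware that neither your argument nor the paper's covers this case as written.
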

\begin{proof}
As in the proof of the above proposition, we may immediately replace $S$ by its minimal model. If $S$ is not of general type, then the conclusion follows from the explicit classification \cite[VI, (1.1) Thm.]{Barth:2004aa}. Assuming now that $S$ is of general type, we apply the Bogomolov--Miyaoka--Yau inequality $\text{deg} \left( c_1^2(\mathcal{T}_S) \right) \leq 3 \cdot \text{deg} \left( c_2(\mathcal{T}_S) \right)$ (see \cite[VII, (4.1) Thm.]{Barth:2004aa}). We have:
\begin{align*}
\text{deg} \left( c_1^2(\mathcal{T}_S) - c_2(\mathcal{T}_S) \right) &\leq \text{deg} \left( \frac{1}{2} \cdot c_1^2(\mathcal{T}_S) + \frac{3}{2} \cdot c_2(\mathcal{T}_S) - c_2(\mathcal{T}_S) \right) &\text{(using the BMY inequality)} \\
&= 6 \cdot \chi(S, \mathcal{O}_S). &\text{(by Noether's formula)}
\end{align*}
This concluding expression completes the proof.
\end{proof}

\begin{proposition} \label{alternate}
Suppose that $S$ satisfies $\chi(S, \mathcal{O}_S) = r$ and $K_S^2 = q$, where $K_S^2$ is $S$'s canonical self-intersection number. Then $\emph{deg} \left( c_1^2(\mathcal{T}_S) - c_2(\mathcal{T}_S) \right) = -12r + 2q$.
\end{proposition}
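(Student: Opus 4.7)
The plan is a two-line computation that combines Noether's formula with the identification $K_S^2 = \deg(c_1^2(\mathcal{T}_S))$. Everything needed was recalled in the preceding subsection (Noether's formula and the surface theory of \cite{Barth:2004aa}), so no new geometric input is required.

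First I would use the equality $K_S = -c_1(\mathcal{T}_S)$, which follows from $K_S = c_1(\Omega_S) = -c_1(\mathcal{T}_S)$, to rewrite the hypothesis $K_S^2 = q$ as
\begin{equation*}\deg\bigl(c_1^2(\mathcal{T}_S)\bigr) = q.\end{equation*}
Next I would invoke Noether's formula $\chi(S, \mathcal{O}_S) = \frac{1}{12}\deg\bigl(c_1^2(\mathcal{T}_S) + c_2(\mathcal{T}_S)\bigr)$, which together with the hypothesis $\chi(S, \mathcal{O}_S) = r$ yields
\begin{equation*}\deg\bigl(c_2(\mathcal{T}_S)\bigr) = 12r - q.\end{equation*}

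Subtracting the second displayed expression from the first then gives $\deg(c_1^2(\mathcal{T}_S) - c_2(\mathcal{T}_S)) = q - (12r - q) = -12r + 2q$, which completes the proof. There is no real obstacle here; the statement is essentially a bookkeeping reformulation of Noether's formula in terms of the invariant introduced in Proposition \ref{invariant}, and its utility lies in allowing the invariant to be read off from the pair $(\chi(S, \mathcal{O}_S), K_S^2)$ in subsequent applications.
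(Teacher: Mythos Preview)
Your proof is correct and follows essentially the same approach as the paper: both identify $K_S^2$ with $\deg(c_1^2(\mathcal{T}_S))$ and then apply Noether's formula, differing only in the trivial algebraic rearrangement (the paper writes $c_1^2 - c_2 = -(c_1^2 + c_2) + 2c_1^2$ before substituting, whereas you solve for $\deg(c_2(\mathcal{T}_S))$ first).
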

\begin{proof}
Identifying $c_1^2(\mathcal{T}_S) = K_S^2$, by Noether's formula we have that $\text{deg} \left( c_1^2(\mathcal{T}_S) - c_2(\mathcal{T}_S) \right) = - \text{deg} \left( c_1^2(\mathcal{T}_S) + c_2(\mathcal{T}_S) \right) + 2q = -12r + 2q$.
\end{proof}

\subsection{Positivity and the growth of associated functions}

We return to a clean embedding $X \subset V$. We have the following perspective on $Q_{X \subset V}$:

\begin{lemma} \label{quadratic}
Identifying $N^2(V) \cong \mathbb{Z}^m$ via a basis $e_1, \ldots , e_m$ of cycle classes, the function $Q_{X \subset V}$ becomes a quadratic function on $\mathbb{Z}^m$ with integral coefficients.
\end{lemma}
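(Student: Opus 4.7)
The plan is to expand $Q_{X \subset V}(\alpha)$ directly in terms of the basis $e_1, \ldots, e_m$ and observe that every resulting coefficient is an integer.

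First I would write an arbitrary $\alpha \in N^2(V)$ as $\alpha = \sum_{i=1}^m a_i e_i$ with $a_i \in \mathbb{Z}$. Appealing to Lemma \ref{restriction}, I can choose a lift of $c_2(\mathcal{T}_X)$ to an element $\gamma \in N^2(V)$, so that $[X] \cdot \alpha \cdot c_2(\mathcal{T}_X) = [X] \cdot \alpha \cdot \gamma$ is a well-defined product in $N^*(V)$. Using the bilinearity of the intersection product in the graded ring $N^*(V)$, I would then expand
\begin{equation*}
[X] \cdot \alpha \cdot \alpha = \sum_{i,j=1}^m a_i a_j \bigl( [X] \cdot e_i \cdot e_j \bigr), \qquad [X] \cdot \alpha \cdot \gamma = \sum_{i=1}^m a_i \bigl( [X] \cdot e_i \cdot \gamma \bigr).
\end{equation*}
Since $[X]$ has codimension $\dim V - 4$ and each of $e_i, e_j, \gamma$ has codimension $2$, each product on the right-hand side lies in $N_0(V)$, on which $\deg$ is defined with integer values.

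Applying $\deg$ and using its $\mathbb{Z}$-linearity yields
\begin{equation*}
Q_{X \subset V}(\alpha) = \sum_{i,j=1}^m A_{ij}\, a_i a_j - \sum_{i=1}^m B_i\, a_i,
\end{equation*}
where $A_{ij} := \deg([X] \cdot e_i \cdot e_j) \in \mathbb{Z}$ and $B_i := \deg([X] \cdot e_i \cdot \gamma) \in \mathbb{Z}$. This is manifestly a quadratic polynomial in the coordinates $(a_1, \ldots, a_m)$ with integral coefficients, which is the assertion.

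Since every ingredient of the argument is a direct unwinding of definitions, there is no substantive obstacle; the only point requiring any care is the invocation of Lemma \ref{restriction} to make sense of the product $[X] \cdot \alpha \cdot c_2(\mathcal{T}_X)$ entirely within $N^*(V)$, which is precisely what allows the coefficients $B_i$ to be computed as intersection numbers on the ambient $V$ rather than on $X$.
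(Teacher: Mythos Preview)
Your proof is correct and is essentially identical to the paper's own argument: both write $\alpha$ in the chosen basis, expand $Q_{X \subset V}$ by bilinearity of the intersection product, and read off the integer coefficients $\deg([X]\cdot e_i\cdot e_j)$ and $\deg([X]\cdot e_i\cdot c_2(\mathcal{T}_X))$. Your explicit mention of Lemma~\ref{restriction} to lift $c_2(\mathcal{T}_X)$ to $N^2(V)$ is a small clarification the paper handles by convention, but the substance is the same.
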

\begin{proof}Writing any cycle class $\alpha \in N^2(V)$ uniquely as $\alpha = x_1 e_1 + \cdots + x_m e_m$, we have:
\begin{align*}Q_{X \subset V}(\alpha) &= \text{deg} \left( [X] \cdot \alpha \cdot \alpha - [X] \cdot \alpha \cdot c_2 \left( \mathcal{T}_{X} \right) \right) \\
&= \text{deg} \left( [X] \cdot (x_1 e_1 + \cdots + x_m e_m) \cdot ((x_1 e_1 + \cdots + x_m e_m) - c_2 \left( \mathcal{T}_{X} \right)) \right) \\
&= \sum_{i, j = 1}^m \text{deg} ([X] \cdot e_i \cdot e_j) \cdot x_i x_j - \sum_{k = 1}^m \text{deg} \left( [X] \cdot e_k \cdot c_2 \left( \mathcal{T}_{X} \right) \right) \cdot x_k.
\end{align*}
This concluding expression completes the proof.
\end{proof}

We remark that the second-order part of $Q_{X \subset V}$ is precisely $\alpha \mapsto \text{deg} ([X] \cdot \alpha \cdot \alpha)$.

We isolate an important positivity condition on clean embeddings $X \subset V$:

\begin{definition} \label{pair}
We will call a clean embedding $X \subset V$ a \textit{decent pair} if the integral quadratic form $\alpha \mapsto \text{deg} ([X] \cdot \alpha \cdot \alpha)$ on $N^2(V)$ is positive definite.
\end{definition}

By the basic results of the previous section, $Q_{X \subset V}$ controls the smooth ambient surfaces $S$ in $X$:

\begin{theorem} \label{bound}
Let $X \subset V$ be a decent pair. Then for any $s \in \mathbb{Z}$, at most finitely many numerical equivalence classes in $N^2(X)$ are representable by a smooth ambient surface $S$ in $X$ satisfying $\emph{deg} \left( c_1^2(\mathcal{T}_S) - c_2(\mathcal{T}_S) \right) \leq s$.
\end{theorem}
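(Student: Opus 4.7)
The plan is to lift the counting problem from $N^2(X)$ up to $N^2(V)$, where $Q_{X \subset V}$ lives, and convert the Chern number bound into a sub-level-set condition on $Q_{X \subset V}$. Explicitly, given a smooth ambient surface $S \subset X$ with $\deg(c_1^2(\mathcal{T}_S) - c_2(\mathcal{T}_S)) \leq s$, Definition \ref{ambient} supplies some $\alpha \in N^2(V)$ with $i^*(\alpha) = [S]$, and Proposition \ref{core} identifies the Chern number expression with $Q_{X \subset V}(\alpha)$, forcing $Q_{X \subset V}(\alpha) \leq s$. Every such class $[S]$ therefore lies in $i^*(A_s)$, where
\begin{equation*}
A_s := \{\, \alpha \in N^2(V) : Q_{X \subset V}(\alpha) \leq s \,\}.
\end{equation*}
It suffices to prove that $A_s$ is finite.

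The finiteness of $A_s$ is a standard consequence of positive definiteness combined with the structural information in Lemma \ref{quadratic}. Fix a basis identifying $N^2(V) \cong \mathbb{Z}^m$ and extend scalars to $\mathbb{R}^m$; then $Q_{X \subset V}(\alpha) = q(\alpha) - \ell(\alpha)$, where $q(\alpha) := \deg([X] \cdot \alpha \cdot \alpha)$ is the second-order part and $\ell(\alpha) := \deg([X] \cdot \alpha \cdot c_2(\mathcal{T}_X))$ is a linear form. The decency hypothesis is precisely the statement that $q$ is positive definite, so there is a constant $c > 0$ with $q(\alpha) \geq c \|\alpha\|^2$ on $\mathbb{R}^m$; together with a bound $|\ell(\alpha)| \leq C \|\alpha\|$, this gives
\begin{equation*}
Q_{X \subset V}(\alpha) \;\geq\; c \|\alpha\|^2 - C \|\alpha\|,
\end{equation*}
which tends to $+\infty$ as $\|\alpha\| \to \infty$. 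Hence $A_s$ is bounded in $\mathbb{R}^m$, and its intersection with the lattice $\mathbb{Z}^m$ is therefore finite. Applying $i^*$ yields a finite subset of $N^2(X)$ containing every class $[S]$ meeting the hypotheses.

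The argument has no substantial obstacle beyond what has already been established in the section: the theorem packages Proposition \ref{core}, Lemma \ref{quadratic}, and the elementary fact that a bounded subset of a finitely generated lattice is finite, with the decency condition supplying precisely the coercivity needed to make the packaging go through. The one minor technical point worth noting is that $i^*$ need not be injective and that distinct $\alpha \in A_s$ may pull back to the same $[S]$; this is harmless, since we only need the image $i^*(A_s)$ itself to be finite.
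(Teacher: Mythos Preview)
Your proof is correct and follows essentially the same route as the paper's: lift to $N^2(V)$ via Definition \ref{ambient}, apply Proposition \ref{core} to land in the sublevel set $\{\alpha : Q_{X\subset V}(\alpha)\le s\}$, use Lemma \ref{quadratic} and the positive-definiteness in Definition \ref{pair} to see that this set is finite, and push forward by $i^*$. Your coercivity argument merely spells out in more detail what the paper records in one line, and your remark about the possible non-injectivity of $i^*$ is a welcome clarification but not a point of divergence.
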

\begin{proof}
Proposition \ref{core} implies that any element $\alpha \in N^2(V)$ for which there exists a smooth surface $S \subset X$ with $\text{deg} \left( c_1^2(\mathcal{T}_S) - c_2( \mathcal{T}_S) \right) \leq s$ and $[S] = i^*(\alpha)$ satisfies $Q_{X \subset V}(\alpha) \leq s$. Definition \ref{pair} meanwhile implies that the second-order part of the quadratic function $Q_{X \subset V}$, say as in Lemma \ref{quadratic} above, is a positive definite quadratic form on $\mathbb{Z}^m$, and it follows that $Q_{X \subset V}(x_1, \ldots , x_m) \leq s$ for at most finitely many tuples $(x_1, \ldots , x_m) \in \mathbb{Z}^m$. The numerical equivalence classes $[S]$ of these surfaces are contained in the image under $i^*$ of this finite set.
\end{proof}

We let $X \subset V$ be a decent pair in the following corollaries:

\begin{corollary} \label{numnongeneral}
At most finitely many numerical equivalence classes in $N^2(X)$ are representable by a smooth ambient surface $S$ in $X$ not of general type.
\end{corollary}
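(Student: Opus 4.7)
The plan is to combine the universal bound from Proposition \ref{nongeneral} with the finiteness statement of Theorem \ref{bound}. Specifically, Proposition \ref{nongeneral} asserts that for any smooth surface $S$ not of general type, the invariant $\deg(c_1^2(\mathcal{T}_S) - c_2(\mathcal{T}_S))$ is bounded above by $6$, independently of the surface. This immediately places the class of surfaces in question inside the hypothesis of Theorem \ref{bound} with $s = 6$.

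Concretely, I would proceed as follows. First, note that any smooth ambient surface $S \subset X$ which is not of general type satisfies $\deg(c_1^2(\mathcal{T}_S) - c_2(\mathcal{T}_S)) \leq 6$ by Proposition \ref{nongeneral}. Second, since $X \subset V$ is assumed to be a decent pair, Theorem \ref{bound} applied to the value $s = 6$ shows that at most finitely many numerical equivalence classes in $N^2(X)$ are representable by smooth ambient surfaces in $X$ satisfying this bound on the invariant. Combining these two facts yields the corollary.

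There is no real obstacle; the whole point of the machinery assembled in the previous sections is that this corollary should drop out as an immediate consequence. The proof will therefore be only a couple of lines: a citation of Proposition \ref{nongeneral} to produce the bound on the invariant, followed by a citation of Theorem \ref{bound} to convert that bound into finiteness of numerical classes.
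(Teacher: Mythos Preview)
Your proposal is correct and matches the paper's own proof essentially verbatim: the paper simply invokes Proposition \ref{nongeneral} to obtain the bound $\deg(c_1^2(\mathcal{T}_S) - c_2(\mathcal{T}_S)) \leq 6$ and then applies Theorem \ref{bound} with $s = 6$.
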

\begin{proof}
By Proposition \ref{nongeneral}, this follows from Theorem \ref{bound} above using $s = 6$.
\end{proof}

\begin{corollary} \label{numrational}
At most finitely many numerical equivalence classes in $N^2(X)$ are representable by a smooth rational ambient surface $S$ in $X$.
\end{corollary}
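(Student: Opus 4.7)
The plan is to observe that this corollary is essentially immediate from Corollary \ref{numnongeneral}, which was just established. Indeed, a rational surface has Kodaira dimension $-\infty$, so in particular it is not of general type. Therefore the class of smooth rational ambient surfaces $S \subset X$ is contained in the class of smooth ambient surfaces $S \subset X$ not of general type, and the finiteness of numerical equivalence classes representable in the latter class immediately implies the same for the former.

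Concretely, I would simply invoke Corollary \ref{numnongeneral} applied to the subset of smooth rational ambient surfaces. No additional computation or appeal to Proposition \ref{nongeneral}, Theorem \ref{bound}, or the decency hypothesis is needed beyond what already went into Corollary \ref{numnongeneral}. There is no main obstacle here; the only thing to record is the elementary fact that a rational surface, having negative Kodaira dimension, falls into the hypotheses of Corollary \ref{numnongeneral}. (One could alternatively cite \cite[VI, (1.1) Thm.]{Barth:2004aa} for the classification, but this is overkill.) The proof will therefore be one or two lines long.
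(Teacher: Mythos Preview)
Your proposal is correct and matches the paper's approach: the paper states Corollary \ref{numrational} without proof, leaving it as an immediate specialization of Corollary \ref{numnongeneral}, exactly as you describe.
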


\begin{corollary} \label{numeuler}
For any $r \in \mathbb{Z}$, at most finitely many numerical equivalence classes in $N^2(X)$ are representable by a smooth ambient surface $S$ in $X$ satisfying $\chi(S, \mathcal{O}_S) \leq r$.
\end{corollary}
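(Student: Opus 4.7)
The plan is to reduce the statement directly to Theorem \ref{bound} by translating the hypothesis $\chi(S,\mathcal{O}_S) \leq r$ into a uniform upper bound on the invariant $\text{deg}(c_1^2(\mathcal{T}_S) - c_2(\mathcal{T}_S))$, which is precisely the quantity controlled by Theorem \ref{bound}. The bridge between the holomorphic Euler characteristic and this Chern number expression is exactly what Proposition \ref{euler} provides.

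First I would invoke Proposition \ref{euler}, which asserts that for every smooth surface $S$ one has $\text{deg}(c_1^2(\mathcal{T}_S) - c_2(\mathcal{T}_S)) \leq 6 \chi(S,\mathcal{O}_S)$. (Note that Proposition \ref{euler} as stated fixes $r = \chi(S,\mathcal{O}_S)$, so applying it to each $S$ individually yields this inequality.) Combining with the hypothesis $\chi(S,\mathcal{O}_S) \leq r$, we obtain the uniform bound
\begin{equation*}
\text{deg}(c_1^2(\mathcal{T}_S) - c_2(\mathcal{T}_S)) \leq 6r
\end{equation*}
for every smooth ambient surface $S \subset X$ satisfying the hypotheses of the corollary.

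Next I would apply Theorem \ref{bound} to the decent pair $X \subset V$ with the value $s := 6r$. This immediately yields that at most finitely many numerical equivalence classes in $N^2(X)$ are representable by a smooth ambient surface $S \subset X$ with $\text{deg}(c_1^2(\mathcal{T}_S) - c_2(\mathcal{T}_S)) \leq 6r$, and the preceding paragraph shows every such $S$ considered in this corollary lies in this finite collection.

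There is essentially no obstacle here; both of the substantive ingredients (the Bogomolov--Miyaoka--Yau bound packaged into Proposition \ref{euler}, and the positive-definiteness argument underlying Theorem \ref{bound}) have already been carried out. The only thing to verify is that the direction of the inequality in Proposition \ref{euler} matches what is needed, which it does since the bound is an upper bound on the Chern number expression in terms of $\chi$, and $\chi \leq r$ then propagates to $6\chi \leq 6r$.
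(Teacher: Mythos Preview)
Your proof is correct and follows exactly the same approach as the paper: invoke Proposition \ref{euler} to obtain the bound $\text{deg}(c_1^2(\mathcal{T}_S) - c_2(\mathcal{T}_S)) \leq 6r$, then apply Theorem \ref{bound} with $s = 6r$. The paper's proof is simply a one-line version of what you wrote.
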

\begin{proof}
Using Proposition \ref{euler}, we apply Theorem \ref{bound} using $s = 6r$.
\end{proof}

We have the following fundamental criterion for decency:

\begin{proposition} \label{picdecent}
Consider a clean embedding $X \subset V$ for which $[X] = \left( \prod_{k = 1}^{\emph{dim}(V) - 4} c_1(L_k) \right)$ for ample line bundles $L_k$ on $V$. Then $X \subset V$ is a decent pair if $\rho(V) = 1$.
\end{proposition}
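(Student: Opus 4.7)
The plan is to reduce the question to the Hodge--Riemann bilinear relations in cohomological degree $4$, using $\rho(V) = 1$ to make the Lefschetz decomposition on $N^2(V)$ collapse to just two summands. Since $\rho(V) = 1$ I can fix an ample generator $h$ of $N^1(V) \otimes \mathbb{Q}$ and write each ample class $c_1(L_k) = a_k h$ with $a_k \in \mathbb{Q}_{>0}$; then $[X] = c \, h^{n-4}$ with $c = \prod a_k > 0$ and $n = \dim V$, and positive-definiteness of $\alpha \mapsto \deg([X]\cdot\alpha\cdot\alpha)$ reduces to that of $Q(\alpha) := \deg(h^{n-4} \cdot \alpha \cdot \alpha)$ on $N^2(V) \otimes \mathbb{R}$. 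A further consequence of $\rho(V) = 1$ that I would exploit is that the non-degenerate numerical pairing $N^{n-1}(V) \times N^1(V) \to \mathbb{Z}$ forces $\operatorname{rank} N^{n-1}(V) = 1$, hence $N^{n-1}(V) \otimes \mathbb{Q} = \mathbb{Q} \cdot h^{n-1}$.

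Using this I would split $N^2(V) \otimes \mathbb{R} = P \oplus \mathbb{R} \cdot h^2$, where $P := \ker\bigl(h^{n-3} \cdot \colon N^2(V) \otimes \mathbb{R} \to N^{n-1}(V) \otimes \mathbb{R}\bigr)$: since $h^{n-3}\cdot\alpha$ is always a real multiple of $h^{n-1}$, one can subtract off the appropriate $c \, h^2$ to land in $P$, and directness is automatic as $h^{n-1} \neq 0$. Expanding the form and using that $h^{n-3}\cdot\alpha_0 = 0$ implies $\deg(h^{n-2}\cdot\alpha_0) = 0$, the cross-terms vanish and $Q(\alpha_0 + c h^2) = Q(\alpha_0) + c^2 \deg(h^n)$. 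The $c^2 \deg(h^n)$ contribution is positive for $c \neq 0$ by ampleness, leaving only the assertion that $Q(\alpha_0) > 0$ for nonzero $\alpha_0 \in P$; this is exactly what the Hodge--Riemann relations provide, giving $\int_V h^{n-4}\cdot\alpha_0\cdot\alpha_0 > 0$ on nonzero real primitive Hodge classes $\alpha_0 \in P^{2,2}(V,\mathbb{R})$.

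The main obstacle is this final Hodge--Riemann step: one must check that the numerical primitivity condition defining $P$ corresponds to genuine Lefschetz primitivity in $H^{2,2}(V,\mathbb{R})$. The hypothesis $\rho(V) = 1$ is essential here, because the one-dimensionality of both $N^1(V)\otimes\mathbb{Q}$ and $N^{n-1}(V)\otimes\mathbb{Q}$ is precisely what forces the algebraic part of the Lefschetz decomposition of $H^{2,2}(V,\mathbb{R})$ to consist of only the two pieces $P$ and $\mathbb{R}\cdot h^2$ already identified on the numerical level; without it, additional Lefschetz-type contributions of the form $h\cdot\beta$, with $\beta$ a non-$h$ divisor class, would survive, carrying negative HR signature and destroying positive-definiteness.
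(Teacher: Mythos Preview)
Your approach is correct and, in one respect, more elementary than the paper's. The paper keeps the product $\prod_k c_1(L_k)$ of possibly distinct ample classes intact and invokes the \emph{mixed} Hodge--Riemann bilinear relations of Timorin and Dinh--Nguy\^{e}n to obtain a generalized Lefschetz decomposition of $H^{2,2}(V,\mathbb{Q})$ with respect to this mixed form; it then uses the Lefschetz $(1,1)$ theorem together with $\rho(V)=1$ to force $H^{1,1}(V,\mathbb{Q})_{\mathrm{prim}}=0$, whence the pairing $\alpha\mapsto\int_V [X]\wedge\alpha\wedge\bar\alpha$ is positive definite on all of $H^{2,2}(V,\mathbb{Q})$, and finally descends this to $N^2(V)$ realized as a subquotient. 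You instead exploit $\rho(V)=1$ at the very first step to write every $c_1(L_k)$ as a positive rational multiple of a single ample class $h$, collapsing $[X]$ numerically to $c\,h^{n-4}$; this lets you get away with the \emph{classical} Hodge--Riemann relations in bidegree $(2,2)$ for a single polarization, and your two-term decomposition $N^2(V)_{\mathbb{R}}=P\oplus\mathbb{R}h^2$ is exactly the restriction of the paper's three-term Lefschetz decomposition after the middle term has been killed. The step you flag as the ``main obstacle'' is genuine but goes through: since $\alpha_0$ is a real $(2,2)$-class, $h^{n-3}\alpha_0$ lands in $H^{n-1,n-1}(V,\mathbb{R})$, which by Hard Lefschetz and the Lefschetz $(1,1)$ theorem combined with $\rho(V)=1$ is exactly $\mathbb{R}\,h^{n-1}$, so numerical vanishing of $h^{n-3}\alpha_0$ (equivalently $\deg(h^{n-2}\alpha_0)=0$) really is cohomological vanishing, and classical Hodge--Riemann applies. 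Your route buys elementarity; the paper's mixed-HR setup is positioned for the hoped-for extension in which $[X]$ is a more general positive class (cf.\ the remark following the proposition), where your reduction to a single $h$ would no longer be available.
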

\begin{proof}
We apply a specialization of the Hodge index theorem, together with a ``mixed'' variant of the Hodge--Riemann bilinear relations due to Timorin \cite{Timorin:1998aa} as well as Dinh and Nguy\^{e}n \cite{Dinh:2006aa}. We take all cycles as elements of $H^{2*}(V, \mathbb{Q})$ in what follows (see Voisin \cite[\S 2.1.4]{Voisin:2014aa}).

Fixing a K\"{a}hler form $\omega := c_1(\mathcal{O}_V(1))$ on $V$, for each $i \geq 0$ we define:
\begin{equation*}\textstyle H^{2-i,2-i}(V, \mathbb{Q})_{\text{prim}} := \text{ker} \left( \alpha \mapsto \alpha \wedge \left( \bigwedge_{k = 1}^{\text{dim}(V) - 4} c_1(L_k) \right) \wedge \omega^{2i + 1} \right).\end{equation*}
By \cite{Dinh:2006aa}, for each $i \geq 0$ the form $\left( \bigwedge_{k = 1}^{\text{dim}(V) - 4} c_1(L_k) \right) \wedge \omega^{2i}$ satisfies the Lefschetz decomposition and Hodge--Riemann bilinear relations for the bidegree $(2-i,2-i)$.

Denoting by $L$ the Lefschetz operator $L \colon H^k(V, \mathbb{Q}) \rightarrow H^{k + 2}(V, \mathbb{Q})$ on the cohomology of $V$, we have a generalized Lefschetz decomposition on $H^{2,2}(V, \mathbb{Q})$:
\begin{align*}
H^{2,2}(V, \mathbb{Q}) &= \bigoplus_{i \geq 0} L^i(H^{2-i,2-i}(V, \mathbb{Q})_{\text{prim}}) \\
&= H^{2,2}(V, \mathbb{Q})_{\text{prim}} \oplus L(H^{1,1}(V, \mathbb{Q})_{\text{prim}}) \oplus L^2(H^{0,0}(V, \mathbb{Q})_{\text{prim}}).
\end{align*}
Because the forms $\bigwedge_{k = 1}^{\text{dim}(V) - 4} c_1(L_k)$ and $[X]$ differ by a numerically trivial cycle, we may replace the former by the latter in what follows. We see that the pairing $\alpha \mapsto \int_V \alpha \wedge \overline{\alpha} \wedge [X]$ on $H^{2,2}(V, \mathbb{Q})$ is orthogonal across the above summands, and definite of sign $(-1)^{2 - i}$ on the $i$\textsuperscript{th} summand.

The Lefschetz theorem on $(1,1)$-classes on the other hand implies that $H^{1,1}(V, \mathbb{Q}) = H^{1,1}(V, \mathbb{Q})_{\text{prim}} \oplus L(H^{0,0}(V, \mathbb{Q})_{\text{prim}})$ is exhausted by its subgroup consisting of rational divisor classes. The assumption $\rho(V) = 1$ thus ensures that this subgroup consists of $L(H^{0,0}(V, \mathbb{Q})_{\text{prim}})$ alone, and that $H^{1,1}(V, \mathbb{Q})_{\text{prim}} = 0$. Thus $\alpha \mapsto \int_V \alpha \wedge \overline{\alpha} \wedge [X]$ is positive definite on $H^{2,2}(V, \mathbb{Q})$.

As $N^2(V)$ is torsion free, the quotient map $H^{2,2}(V, \mathbb{Z})_{\text{alg}} \rightarrow N^2(V)$ factors into a chain of quotients:
\begin{equation*}H^{2,2}(V, \mathbb{Z})_{\text{alg}} \rightarrow H^{2,2}(V, \mathbb{Z})_{\text{alg}} / \text{tors} \rightarrow N^2(V).\end{equation*}
Identifying the middle group with the image of $H^{2,2}(V, \mathbb{Z})_{\text{alg}} \rightarrow H^{2,2}(V, \mathbb{Q})$, we realize $N^2(V)$ as a subquotient of the abelian group $H^{2,2}(V, \mathbb{Q})$. Thus the positivity of the pairing on $H^{2,2}(V, \mathbb{Q})$ is induced also on $N^2(V)$.
\end{proof}

\begin{remark} \label{picdecentgen}
One is tempted to formulate the analogue of Proposition \ref{picdecent} in which $[X] = c_r(\mathcal{E})$ for any say globally generated vector bundle $\mathcal{E}$ of rank $r$ on $V$ (cf. Voisin \cite[\S 1, Ex. 2. (b)]{Voisin:2003aa}). Unfortunately, this more general statement appears out of reach.
\end{remark}

\begin{remark} \label{picdecentopp}
The opposite implication of Proposition \ref{picdecent} as well as of its generalization in Remark \ref{picdecentgen} would immediately follow upon assuming the Hodge conjecture for codimension-2 cycles on $V$. Indeed, $\rho(V) = 1$ if and only if the pairing $\alpha \mapsto \int_V \alpha \wedge \overline{\alpha} \wedge [X]$ is positive definite on $H^{2,2}(V, \mathbb{Q})$. Using now that the quotient map $H^{2,2}(V, \mathbb{Z})_{\text{alg}} \rightarrow N^2(V)$ annihilates only torsion (see \cite[19.3.2. (iii)]{Fulton:1984aa}), we see that $N^2(V)$ is precisely the subgroup of the image of $H^{2,2}(V, \mathbb{Z})$ in $H^{2,2}(V, \mathbb{Q})$ consisting of algebraic cycles. If this subgroup is of full rank, then the signature of the pairing on $H^{2,2}(V, \mathbb{Q})$ is inherited also on $N^2(V)$.
\end{remark}

The predictions of these remarks are validated in each of the case studies treated in Section \ref{detailed}.

\section{Decency over finer equivalence relations on cycles}

In this section, we develop analogues of the theory over equivalence relations on cycles finer than numerical equivalence. We then apply these results to ``tautological'' clean embeddings $X \subset X$, for which the Chow groups $CH^2(V) = CH^2(X)$ tend to be difficult to control.

Ciliberto and Di Gennaro \cite{Ciliberto:2002aa} prove that in any smooth fourfold $X$ with Picard rank $\rho(X) = 1$ (with fixed ample divisor), the smooth non-general type surfaces $S$ in $X$ have bounded degree. It follows from this by Kleiman \cite[Cor. 6.11. (ii)]{Kleiman:1971aa} that in fact these smooth surfaces $S$ have only finitely many Hilbert polynomials, and thus represent only finitely many components of the Hilbert scheme parameterizing the smooth surfaces in $X$. Because the Hilbert scheme is projective, it follows in turn that these $S$ belong to finitely many algebraic families. From say \cite[Ex. 10.3.3]{Fulton:1984aa} it follows finally that the $S$ represent only finitely many cycle classes up to algebraic equivalence. (I would like to thank Vincenzo Di Gennaro for this argument.)

The conclusions established in this section evoke, and sometimes refine, the specialization of this one to those fourfolds which in addition have vanishing first Chern class.

\subsection{Ambience and positivity over finer relations}

The general theory developed above ``lifts'' to finer equivalence relations on algebraic cycles.

We write $E$ for an adequate equivalence relation in the sense of say Jannsen \cite[p. 228]{Jannsen:2000aa}, so that for a smooth variety $X$ the ring of cycle classes in $X$ up to $E$-equivalence is denoted by $E^*(X)$. As numerical equivalence is the coarsest nontrivial equivalence relation on cycles (see \cite[p. 228]{Jannsen:2000aa}), we have a natural quotient map $E^*(X) \rightarrow N^*(X)$.

We have a refined notion of ambience:

\begin{definition} \label{eambient}
Let $X \subset V$ be a clean embedding, and denote by $i$ the inclusion. We will say that a surface $S$ in $X$ is $E$-\textit{ambient} if the cycle class $[S]$ of $S$ up to $E$-equivalence is in the image of the pullback homomorphism $i^* \colon E^2(V) \rightarrow E^2(X)$.
\end{definition}

The function $Q_{X \subset V}$ pulls back to cycle classes up to $E$-equivalence, and as $E$-ambience is finer than ambience, the key result of Proposition \ref{core} clearly continues to hold.

We extend the definition of decency:

\begin{definition} \label{epair}
We will call a clean embedding $X \subset V$ an $E$-\textit{decent pair} if $X \subset V$ is a decent pair and, in addition, the quotient map $E^2(V) \rightarrow N^2(V)$ has finite kernel.
\end{definition}

Thus $X \subset V$ is an $E$-decent pair if and only if both of the following conditions are satisfied:
\begin{enumerate}
\item The integral quadratic form $\alpha \mapsto \text{deg} ([X] \cdot \alpha \cdot \alpha)$ on $N^2(V)$ is positive definite.
\item The quotient map $E^2(V) \rightarrow N^2(V)$ has finite kernel.
\end{enumerate}

A clean embedding in which $E$-decency fails is given in Example \ref{nondecent} below.

We record analogues of Theorem \ref{bound} as well as of Corollaries \ref{numnongeneral}, \ref{numrational}, and \ref{numeuler}.

\begin{theorem} \label{ebound}
Let $E$ be an adequate equivalence relation, and let $X \subset V$ be an $E$-decent pair. Then for any $s \in \mathbb{Z}$, at most finitely many $E$-equivalence classes in $E^2(X)$ are representable by a smooth $E$-ambient surface $S$ in $X$ satisfying $\emph{deg} \left( c_1^2(\mathcal{T}_S) - c_2(\mathcal{T}_S) \right) \leq s$.
\end{theorem}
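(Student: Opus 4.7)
The plan is to reduce the $E$-version to the numerical version (Theorem \ref{bound}) by using the two conditions defining $E$-decency independently: the quadratic-form positivity handles the numerical image, and the finite-kernel condition handles the fibers of $E^2(V) \to N^2(V)$.

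First I would observe that the function $Q_{X \subset V}$, though introduced on $N^2(V)$, is defined purely in terms of intersection numbers (degrees of zero-cycles), and hence makes equal sense on $E^2(V)$: the composition $E^2(V) \to N^2(V) \xrightarrow{Q_{X \subset V}} \mathbb{Z}$ agrees with the evaluation of the same intersection-theoretic formula on $E$-classes, because degree factors through numerical equivalence. In particular, if $S \subset X$ is a smooth $E$-ambient surface with $[S]_E = i^*(\alpha)$ for some $\alpha \in E^2(V)$, then applying the quotient $E^2(X) \to N^2(X)$ gives $[S]_N = i^*(\bar{\alpha})$ for the image $\bar{\alpha} \in N^2(V)$ of $\alpha$. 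Thus $S$ is ambient in the sense of Definition \ref{ambient}, and Proposition \ref{core} yields
\begin{equation*}
Q_{X \subset V}(\bar{\alpha}) = \text{deg} \left( c_1^2(\mathcal{T}_S) - c_2(\mathcal{T}_S) \right) \leq s.
\end{equation*}

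Second, I would apply the decency half of $E$-decency exactly as in the proof of Theorem \ref{bound}: by Lemma \ref{quadratic}, $Q_{X \subset V}$ is an integral quadratic function on $N^2(V) \cong \mathbb{Z}^m$, and its second-order part $\bar{\alpha} \mapsto \text{deg}([X] \cdot \bar{\alpha} \cdot \bar{\alpha})$ is positive definite by Definition \ref{pair}. Consequently the sublevel set $\{\bar{\alpha} \in N^2(V) : Q_{X \subset V}(\bar{\alpha}) \leq s\}$ is finite. So the numerical images $\bar{\alpha}$ arising from $E$-ambient $S$ satisfying the Chern-number bound lie in a finite subset of $N^2(V)$.

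Third, I would invoke the finite-kernel half of $E$-decency: the quotient $E^2(V) \to N^2(V)$ has finite kernel, so the preimage in $E^2(V)$ of the finite set from the previous step is itself finite. Hence the classes $\alpha \in E^2(V)$ which could give rise to a qualifying $S$ form a finite set, and therefore their images $i^*(\alpha) \in E^2(X)$ also form a finite set which contains every $E$-class $[S]_E$ of a qualifying smooth $E$-ambient surface.

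There is no real obstacle: the argument is a direct packaging of Proposition \ref{core} together with the two clauses of Definition \ref{epair}. The only point requiring a line of justification is the first one above, namely that the definition of $Q_{X \subset V}$ lifts unambiguously to $E^2(V)$ via the quotient map, which is immediate from the fact that the degree of a zero-cycle is a numerical invariant.
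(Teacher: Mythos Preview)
Your proposal is correct and follows essentially the same route as the paper's proof: lift $Q_{X \subset V}$ to $E^2(V)$ through the quotient $E^2(V) \to N^2(V)$, apply Proposition~\ref{core} to get $Q_{X \subset V}(\bar{\alpha}) \leq s$, use positive definiteness to confine the numerical images to a finite set, and then use the finite-kernel hypothesis to pull this finiteness back to $E^2(V)$ and push forward to $E^2(X)$ via $i^*$. The paper compresses all of this into two sentences, but the logical content is the same as what you have written out.
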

\begin{proof}
Any element $\alpha \in E^2(V)$ for which there exists a smooth surface $S \subset X$ with $\text{deg} \left( c_1^2(\mathcal{T}_S) - c_2(\mathcal{T}_S) \right) \leq s$ and $[S] = i^*(\alpha)$ satisfies $Q_{X \subset V}(\alpha) \leq s$. These $\alpha$ map in a finite-to-one manner under $E^2(V) \rightarrow N^2(V)$ to the group of cycle classes up to numerical equivalence, where, as in the proof of Theorem \ref{bound}, the set of their images is again shown to be finite.
\end{proof}

Applying Theorem \ref{ebound} in special cases, we see that:

\begin{corollary} \label{enongeneral}
At most finitely many $E$-equivalence classes in $E^2(X)$ are representable by a smooth $E$-ambient surface $S$ in $X$ not of general type.
\end{corollary}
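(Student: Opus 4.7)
The plan is to mirror the proof of Corollary \ref{numnongeneral} in the refined $E$-setting, with Theorem \ref{ebound} playing the role that Theorem \ref{bound} played there. The bound on the invariant that governs the non-general-type condition is already supplied by Proposition \ref{nongeneral}, and the whole content of the corollary is to observe that this bound is enough to trigger the finiteness machinery of Theorem \ref{ebound}.

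Concretely, I would proceed as follows. First, let $S \subset X$ be a smooth $E$-ambient surface in $X$ which is not of general type. Proposition \ref{nongeneral} gives directly that
\begin{equation*}
\text{deg}\left( c_1^2(\mathcal{T}_S) - c_2(\mathcal{T}_S) \right) \leq 6.
\end{equation*}
Second, apply Theorem \ref{ebound} with the constant $s = 6$: since $X \subset V$ is assumed to be an $E$-decent pair (this hypothesis is inherited from the surrounding context of the section, exactly as in Corollary \ref{numnongeneral}), we conclude that at most finitely many $E$-equivalence classes in $E^2(X)$ can be represented by such an $S$.

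There is essentially no obstacle here: all of the work has been front-loaded into Theorem \ref{ebound} (whose proof combines Proposition \ref{core}, the positive definiteness of the quadratic part of $Q_{X \subset V}$, and the finite-kernel condition on $E^2(V) \to N^2(V)$) and into Proposition \ref{nongeneral} (which rests on the classification of minimal surfaces and Lemma \ref{blow}). The only thing to verify is that the two inputs are compatible, namely that a smooth $E$-ambient $S$ with $\text{deg}(c_1^2(\mathcal{T}_S) - c_2(\mathcal{T}_S)) \leq 6$ is exactly the kind of surface to which Theorem \ref{ebound} applies, which is immediate from Definition \ref{eambient}.
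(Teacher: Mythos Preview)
Your proof is correct and matches the paper's approach exactly: the paper presents this corollary (without explicit proof) as an immediate special case of Theorem \ref{ebound}, parallel to how Corollary \ref{numnongeneral} follows from Theorem \ref{bound} via Proposition \ref{nongeneral} with $s = 6$. There is nothing to add.
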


\begin{corollary} \label{erational}
At most finitely many $E$-equivalence classes in $E^2(X)$ are representable by a smooth rational $E$-ambient surface $S$ in $X$.
\end{corollary}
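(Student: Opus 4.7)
The plan is to obtain this statement as an immediate specialization of Corollary \ref{enongeneral}, which has just been established. The observation to exploit is simply that a smooth rational surface is a fortiori not of general type.

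First I would recall that a smooth rational surface $S$, by definition birational to $\mathbb{P}^2$, has Kodaira dimension $\kappa(S) = -\infty$, in particular strictly less than the value $\kappa(S) = 2$ which characterizes surfaces of general type; this is recorded for example in the classification cited via \cite[VI, (1.1) Thm.]{Barth:2004aa} referenced earlier in the paper. Consequently, the collection of smooth rational $E$-ambient surfaces $S \subset X$ is a subcollection of the collection of smooth $E$-ambient surfaces $S \subset X$ not of general type, and the set of $E$-equivalence classes in $E^2(X)$ represented by the former is a subset of the set of those represented by the latter.

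I would then invoke Corollary \ref{enongeneral}, which asserts that the latter set is finite, and conclude that the former is finite as well.

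I do not anticipate any obstacle: the substantive work has already been absorbed into Corollary \ref{enongeneral} (and, behind it, into Theorem \ref{ebound} together with Proposition \ref{nongeneral}, which supplies the uniform bound $\deg\left(c_1^2(\mathcal{T}_S) - c_2(\mathcal{T}_S)\right) \leq 6$ on any smooth non-general-type surface). The only role of the present corollary is to record an aesthetically distinguished specialization of its predecessor.
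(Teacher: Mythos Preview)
Your proposal is correct and matches the paper's approach: the paper gives no proof for Corollary \ref{erational}, treating it as an immediate specialization of Corollary \ref{enongeneral} via the fact that smooth rational surfaces are not of general type. Your write-up simply makes explicit what the paper leaves implicit.
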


\begin{corollary} \label{eeuler}
For any $r \in \mathbb{Z}$, at most finitely many $E$-equivalence classes in $E^2(X)$ are representable by a smooth $E$-ambient surface $S$ in $X$ satisfying $\chi(S, \mathcal{O}_S) \leq r$.
\end{corollary}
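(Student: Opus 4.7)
The plan is to reduce this immediately to Theorem \ref{ebound} by bounding the invariant $\text{deg}(c_1^2(\mathcal{T}_S) - c_2(\mathcal{T}_S))$ in terms of $\chi(S, \mathcal{O}_S)$, exactly mirroring the passage from Theorem \ref{bound} to Corollary \ref{numeuler}. The key input is Proposition \ref{euler}, which tells us that any smooth surface $S$ with $\chi(S, \mathcal{O}_S) = r'$ satisfies $\text{deg}(c_1^2(\mathcal{T}_S) - c_2(\mathcal{T}_S)) \leq 6r'$. In particular, any smooth surface with $\chi(S, \mathcal{O}_S) \leq r$ satisfies $\text{deg}(c_1^2(\mathcal{T}_S) - c_2(\mathcal{T}_S)) \leq 6r$.

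First I would invoke Proposition \ref{euler} to observe that the class of smooth $E$-ambient surfaces $S \subset X$ with $\chi(S, \mathcal{O}_S) \leq r$ is contained in the class of smooth $E$-ambient surfaces in $X$ satisfying $\text{deg}(c_1^2(\mathcal{T}_S) - c_2(\mathcal{T}_S)) \leq 6r$. Then I would apply Theorem \ref{ebound} with the choice $s := 6r$ to conclude that at most finitely many $E$-equivalence classes in $E^2(X)$ are representable by such surfaces, which finishes the proof.

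There is really no substantive obstacle here; the entire content is packaged in Proposition \ref{euler} (whose proof in turn relies on the Bogomolov--Miyaoka--Yau inequality and Noether's formula) and in Theorem \ref{ebound}. The only thing to check is that the direction of the inequalities is compatible: the bound on $\chi$ produces an \emph{upper} bound on the Chern invariant, which is precisely the form of hypothesis Theorem \ref{ebound} consumes. This matches verbatim the structure of the argument for Corollary \ref{numeuler}, with the only difference that the underlying finiteness statement used is the $E$-equivalence version rather than the numerical one.
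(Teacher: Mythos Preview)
Your proposal is correct and matches the paper's approach exactly: the paper states Corollary \ref{eeuler} as an immediate specialization of Theorem \ref{ebound}, and the analogous Corollary \ref{numeuler} is proved precisely by invoking Proposition \ref{euler} and applying Theorem \ref{bound} with $s = 6r$. There is nothing to add.
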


\subsection{Background in the theory of codimension-2 cycles}

We summarize important results and conjectures in the theory of algebraic cycles.

We first recall various adequate equivalence relations. We recall the Chow ring of algebraic cycles up to rational equivalence, introduced for example in \cite[\S 1.2]{Eisenbud:2016aa} and denoted by $CH^*(X) = \bigoplus_{k = 0}^{\text{dim}X} CH^k(X)$. We denote by $CH^k(X)_{\text{alg}}$ the subgroup of $CH^k(X)$ consisting of those codimension-$k$ cycle classes in $X$ which are algebraically equivalent to zero, defined for example as in \cite[Def. 10.3]{Fulton:1984aa}, and by $A^k(X)$ the group of codimension-$k$ cycles in $X$ up to algebraic equivalence. We denote by $CH^k(X)_{\text{hom}}$ the kernel of the cycle class map $\text{cl} \colon CH^k(X) \rightarrow H^{2k}(X, \mathbb{Z})$, defined for example as in Voisin's text \cite[\S 2.1.4]{Voisin:2014aa}, and by $B^k(X)$ the group of codimension-$k$ cycles in $X$ up to homological equivalence. We denote by $Gr^k(X)$ the Griffiths group $CH^k(X)_{\text{hom}} / CH^k(X)_{\text{alg}}$.

We denote by $CH^*(X)_{\mathbb{Q}}$ the Chow group with rational coefficients of a smooth variety $X$. We have:

\begin{conjecture}[{Bloch--Beilinson filtration (see \cite[Conj. 2.19]{Voisin:2014aa}, \cite[0.1]{Voisin:2016aa})}] \label{blochbeilinson}
To each smooth variety $X$ we may associate a filtration $F$ on each Chow group $CH^k(X)_{\mathbb{Q}}$ of $X$, satisfying the following properties:
\begin{enumerate}
\item $F^0CH^k(X)_{\mathbb{Q}} = CH^k(X)_{\mathbb{Q}}$ and $F^1CH^k(X)_{\mathbb{Q}} = CH^k(X)_{\emph{hom}, \mathbb{Q}}$.
\item $F$ respects the action of algebraic correspondences (see \cite[Def. 2.9]{Voisin:2014aa}), in the sense that for any correspondence $\Gamma \in CH^{\emph{dim}(X) + k}(X \times Y)$, $\Gamma_*(F^{\nu}CH^l(X)_{\mathbb{Q}}) \subset F^{\nu}CH^{l + k}(Y)_{\mathbb{Q}}$.
\item The induced map $\Gamma_* \colon Gr^{\nu}CH^l(X)_{\mathbb{Q}} \rightarrow Gr^{\nu}CH^{l + k}(Y)_{\mathbb{Q}}$ vanishes if the K\"{u}nneth component $[\Gamma]_* \colon H^{2l - \nu}(X, \mathbb{Q}) \rightarrow H^{2l - \nu + 2k}(Y, \mathbb{Q})$ of $[\Gamma] \in H^{2\emph{dim}(X) + 2k}(X \times Y, \mathbb{Q})$ vanishes.
\item $F^{k+1}CH^k(X)_{\mathbb{Q}} = 0$.
\end{enumerate}
\end{conjecture}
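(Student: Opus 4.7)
The statement is the Bloch--Beilinson filtration conjecture, which is one of the deepest open problems in the theory of algebraic cycles; the paper quotes it as a conjecture for use as a hypothesis in later sections, and no proof is currently known. Accordingly, my ``proof proposal'' can only sketch the standard program by which one hopes that the conjecture will eventually be established, together with an honest account of where that program breaks down.

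The standard approach is through Murre's conjectures on Chow--K\"{u}nneth decompositions. First I would try to produce, for each smooth projective variety $X$, a decomposition of the diagonal $[\Delta_X] \in CH^{\dim X}(X \times X)_{\mathbb{Q}}$ as a sum of pairwise orthogonal idempotents $\pi_0 + \pi_1 + \cdots + \pi_{2 \dim X}$ whose images in $H^{2 \dim X}(X \times X, \mathbb{Q})$ realize the topological K\"{u}nneth projectors onto $H^j(X, \mathbb{Q}) \otimes H^{2 \dim X - j}(X, \mathbb{Q})$. Given such a decomposition, I would define $F^{\nu} CH^k(X)_{\mathbb{Q}}$ as the intersection, over $j = 0, 1, \ldots, \nu - 1$, of the kernels of the actions of the projectors $\pi_{2k - j}$ on $CH^k(X)_{\mathbb{Q}}$. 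Property (ii) then reduces to the fact that Chow-theoretic correspondences compose, so that pulling back along a correspondence $\Gamma$ commutes with the $\pi$'s up to terms lying deeper in the filtration. Property (iii) follows because, by construction, a correspondence whose K\"{u}nneth component of the relevant degree vanishes must act trivially on the graded piece which the projectors isolate. Properties (i) and (iv) reduce respectively to Murre's vanishing assertions that $\pi_{2k}$ acts as the identity modulo homological equivalence and that $\pi_j$ acts as zero on $CH^k(X)_{\mathbb{Q}}$ for $j < k$ or $j > 2k$.

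The main obstacle is that essentially every ingredient of this sketch is itself open. The existence of a Chow--K\"{u}nneth decomposition is known only for curves, surfaces, abelian varieties, certain uniruled threefolds, and a handful of additional cases; even granting such a decomposition, the relevant vanishing of $(\pi_j)_*$ on Chow groups is of comparable depth to the Bloch--Beilinson conjecture itself. Deeper still, the approach presupposes Grothendieck's standard conjecture of K\"{u}nneth type, needed to lift the topological K\"{u}nneth projectors to algebraic cycles in the first place. An alternative route through Voevodsky's triangulated category of motives equipped with a conjectural motivic $t$-structure runs into essentially the same package of obstructions, as do approaches via transcendental methods (normal functions, infinitesimal invariants, and variants of Nori's construction). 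In every formulation one is ultimately forced to lift information from Betti cohomology back to Chow groups, and it is precisely this lifting problem, intrinsic to the gap between homological and rational equivalence, that makes the conjecture so formidable and keeps it out of reach by present techniques.
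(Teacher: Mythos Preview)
Your assessment is correct: the statement is recorded in the paper as a conjecture, not a theorem, and the paper offers no proof whatsoever---it simply quotes the Bloch--Beilinson filtration conjecture from Voisin's references and later invokes it as a standing hypothesis (in Theorem~\ref{griffiths} and Proposition~\ref{algebraically}). There is therefore nothing in the paper to compare your sketch against.

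Your summary of the Murre/Chow--K\"{u}nneth program and its obstructions is accurate and appropriate context, though strictly speaking it goes beyond what the paper itself contains; the paper makes no attempt even to outline a strategy toward the conjecture.
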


We recall also that for any smooth variety $X$ with hyperplane class $l \in H^2(X, \mathbb{Q})$, the Hard Lefschetz theorem (see \cite[(2.6)]{Voisin:2014aa}) gives, for any $k$, an isomorphism of Hodge structures
\begin{equation*}l^{\text{dim}(X) - k} \cup \colon H^k(X, \mathbb{Q}) \rightarrow H^{2\text{dim}(X) - k}(X, \mathbb{Q}),\end{equation*}
and thus an inverse isomorphism
\begin{equation*}(l^{\text{dim}(X) - k})^{-1} \cup \colon H^{\text{2dim}(X) - k}(X, \mathbb{Q}) \rightarrow H^k(X, \mathbb{Q}).\end{equation*}
This latter map defines a Hodge class $\lambda_{\text{dim}(X) - k}$ in $H^{2k}(X \times X, \mathbb{Q})$ (see \cite[Lem. 2.26]{Voisin:2014aa}).

\begin{conjecture}[{Lefschetz standard conjecture $B(X)$ (see \cite[Conj. 2.28]{Voisin:2014aa})}] \label{lefschetz}
Each Hodge class $\lambda_{\emph{dim}(X) - k} \in H^{2k}(X \times X, \mathbb{Q})$ is the class of an algebraic cycle with rational coefficients on $X \times X$.
\end{conjecture}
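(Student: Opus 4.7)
The statement to be ``proved'' is Grothendieck's Lefschetz standard conjecture $B(X)$, which is a well-known open problem in algebraic geometry and which the paper records only as a conjecture, not a theorem. A genuine unconditional proof is out of reach by any method currently available; accordingly the plan below describes how I would try to establish $B(X)$ in as many cases as possible, and identifies where the essential obstruction lies.

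First I would dispose of the trivial and classical cases. For $k = 0$ the class $\lambda_{\dim(X)}$ is represented by $X \times \{\text{pt}\}$, and for $k = \dim(X)$ the class $\lambda_0$ is the class of the diagonal $\Delta_X \subset X \times X$; both are evidently algebraic. For curves and surfaces, $B(X)$ reduces to the Hodge conjecture in codimension $\leq 2$ on $X \times X$ in the relevant weights, which is classical in these ranges via the Lefschetz $(1,1)$ theorem. Next I would invoke the known geometric cases: Lieberman's theorem establishes $B(X)$ for abelian varieties (via Fourier--Mukai correspondences and the theta divisor); varieties admitting a cellular decomposition satisfy $B(X)$ because their cohomology is tautologically algebraic; and recent work handles certain smooth complete intersections in Grassmannians as well as hyper-K\"{a}hler varieties of $K3^{[n]}$-type.

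The general strategy, due to Grothendieck, is to exhibit an algebraic correspondence $\Gamma \in CH^k(X \times X)_{\mathbb{Q}}$ whose cohomology class, viewed as an operator on $H^*(X,\mathbb{Q})$, inverts the Lefschetz operator $\cup\, l^{\dim(X)-k}$. Equivalently, one seeks to realize the inverse of hard Lefschetz as a polynomial, with rational coefficients, in operators of cup product with algebraic classes. In characteristic zero a standard reduction shows that $B(X)$ is implied by the Hodge conjecture for $X \times X$ in the relevant bidegree $(k,k)$, so one approach is to try to prove enough of the Hodge conjecture on $X \times X$ to extract the particular Hodge classes $\lambda_{\dim(X)-k}$.

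The main obstacle is essentially total: no general construction of the required correspondence is known, and $B(X)$ is in fact equivalent to several other standard conjectures (for example the conservation of numerical and homological equivalence, and in characteristic zero the algebraicity of the K\"{u}nneth projectors). Thus the honest conclusion of the plan is that, beyond the known cases enumerated above, I would not attempt a proof; in the body of the paper this statement functions as a hypothesis that one may impose on $V$ when one wishes to upgrade numerical arguments to homological ones, rather than as an assertion one establishes.
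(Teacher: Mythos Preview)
Your assessment is correct: the paper states this as a conjecture with no accompanying proof, using it purely as a hypothesis in later results (e.g., Proposition~\ref{algebraically} and Theorem~\ref{griffiths}), so there is nothing to compare against. Your survey of known cases and reductions is accurate and appropriate context, but none of it is needed here since the paper makes no claim to establish $B(X)$.
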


This is a special case of the following:

\begin{conjecture}[{Generalized Hodge Conjecture (see \cite[Conj. 2.40]{Voisin:2014aa})}] \label{ghc}
Consider a smooth variety $X$. Let $L \subset H^{2k}(X, \mathbb{Q})$ be a sub-Hodge structure of coniveau $\geq c$. Then there exists a closed algebraic subset $Y \subset X$ of codimension $c$ such that $L \subset \emph{ker} \left( H^{2k}(X, \mathbb{Q}) \rightarrow H^{2k}(X \backslash Y, \mathbb{Q}) \right)$.
\end{conjecture}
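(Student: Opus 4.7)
The statement is the Generalized Hodge Conjecture, which remains a well-known open problem in general; the following is therefore not a complete proof plan but an outline of the natural line of attack and of the precise gap that remains. First I would pass to Grothendieck's dual reformulation. The localization long exact sequence in rational cohomology identifies $\ker \left( H^{2k}(X,\mathbb{Q}) \to H^{2k}(X \setminus Y,\mathbb{Q}) \right)$ with the image of the map $H^{2k}_Y(X,\mathbb{Q}) \to H^{2k}(X,\mathbb{Q})$; resolving the singularities of each component of $Y$ and combining with the Gysin pushforward then reduces the conjecture to producing a smooth projective variety $\widetilde{Y}$ of dimension $\dim X - c$ together with a morphism $\widetilde{Y} \to X$ whose Gysin image in $H^{2k}(X,\mathbb{Q})$ contains $L$.

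The natural strategy would be to manufacture such a $\widetilde{Y}$ from a correspondence. Since $H^{2k}(X,\mathbb{Q})$ is polarized by a hyperplane class on $X$, the sub-Hodge structure $L$ admits an orthogonal projector $p_L \colon H^{2k}(X,\mathbb{Q}) \to H^{2k}(X,\mathbb{Q})$, which is itself a morphism of Hodge structures. Via Poincar\'{e} duality on $X \times X$, $p_L$ is encoded by a rational Hodge class $[p_L] \in H^{2 \dim X}(X \times X, \mathbb{Q})$ of Hodge type $(\dim X, \dim X)$. Granting the ordinary Hodge conjecture for $X \times X$, one would realize $[p_L]$ as the class of an algebraic cycle $\Gamma \in CH^{\dim X}(X \times X)_{\mathbb{Q}}$; resolving the support of $\Gamma$ and composing with the first projection would then produce a candidate morphism $\widetilde{Y} \to X$ whose induced correspondence recovers $p_L$ on cohomology and in particular surjects onto $L$.

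The main obstacle---and the reason the conjecture remains open---is that the ordinary Hodge conjecture on $X \times X$ imposes no constraint whatsoever on the codimension of the first-factor image of the cycle $\Gamma$ representing $[p_L]$. The coniveau hypothesis translates Hodge-theoretically into a factorization of $p_L$ through a Hodge structure of lower weight, but it does not automatically upgrade to an algebro-geometric factorization through a subvariety of $X$ of codimension $\geq c$; the existence of this geometric witness is precisely the additional content of the Generalized Hodge Conjecture beyond the ordinary one, and is not implied even by the Lefschetz standard conjecture of Conjecture \ref{lefschetz}. An honest attempt would therefore have to confine itself to the known special cases---the case $c = 1$ applied to $H^2$, which coincides with the Lefschetz $(1,1)$ theorem; small-dimensional abelian varieties, treated by Abdulali and others; and certain low-degree hypersurfaces in which the positive-coniveau sub-Hodge structures can be exhibited explicitly by families of linear subspaces---with a proof in the stated generality lying well beyond the scope of the present paper, which invokes Conjecture \ref{ghc} only as a hypothesis.
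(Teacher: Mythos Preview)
Your assessment is correct: the statement is labeled as a conjecture in the paper and is not accompanied by any proof; the paper merely records it (with a reference to Voisin) and invokes it as a hypothesis in Conjecture~\ref{lewis} and Proposition~\ref{rationally}. There is therefore nothing to compare your outline against, and your discussion of the Grothendieck reformulation, the projector strategy, and the precise obstruction---that the ordinary Hodge conjecture on $X\times X$ does not control the codimension of the support of a representing cycle over the first factor---accurately explains why no proof is available in general.
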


We refer to papers of Lewis \cite{Lewis:1989aa} and Murre \cite{Murre:1994aa} in what follows.

\begin{definition}[{Mumford (see \cite[p. 269]{Lewis:1989aa})}] \label{finitedimensional}
We will say that the group $CH^k(X)_{\text{alg}}$ is \textit{finite-dimensional} if there exists a smooth curve $E$ and a cycle class $\Gamma \in CH^k(E \times X)$ such that the induced map $\Gamma_* \colon CH_0(E)_{\text{alg}} \rightarrow CH^k(X)_{\text{alg}}$ is surjective.
\end{definition}

\begin{definition}[{Lieberman, Murre (see \cite[\S 6.6, II.]{Murre:1994aa})}]
Denoting by $AJ \colon CH^k(X)_{\text{hom}} \rightarrow J^k(X)$ the Abel--Jacobi map into the Griffiths intermediate Jacobian $J^k(X)$, the image in $J^k(X)$ of the restriction to $CH^2(X)_{\text{alg}}$ of $AJ$ is an abelian variety; we will denote it by $J_a^k(X)$.
\end{definition}

Regarding cycles of codimension 2, we have:

\begin{theorem}[{Murre \cite[\S 6.7, Thm. I]{Murre:1994aa}}] \label{universal}
The restricted Abel--Jacobi map $AJ \colon CH^2(X)_{\emph{alg}} \rightarrow J_a^2(X)$ is a universal regular homomorphism in the sense of \cite[\S 6.6]{Murre:1994aa}, and the right-hand side satisfies $\emph{dim} (J_a^2(X)) \leq \frac{1}{2} \emph{dim}(H^3(X, \mathbb{Q}))$.
\end{theorem}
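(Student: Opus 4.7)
The proof splits into the dimension bound, which is immediate, and the universality assertion, which is the substantive part. For the bound, $J_a^2(X)$ is by construction the image in the intermediate Jacobian $J^2(X)$ of the restriction of $AJ$ to $CH^2(X)_{\text{alg}}$. The underlying real torus of $J^2(X)$ is $H^3(X, \mathbb{R})/H^3(X, \mathbb{Z})$, so its complex dimension equals $\tfrac{1}{2}\dim_{\mathbb{Q}} H^3(X, \mathbb{Q})$, and the inclusion $J_a^2(X) \hookrightarrow J^2(X)$ yields the stated inequality.

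For the remainder, I would first verify that $J_a^2(X)$ is actually an abelian subvariety of $J^2(X)$, not merely a complex subtorus. Every class in $CH^2(X)_{\text{alg}}$ arises as $\Gamma_*(c_1 - c_2)$ for some smooth projective curve $C$, correspondence $\Gamma \in CH^3(C \times X)$, and pair of points $c_1, c_2 \in C$. Griffiths's theorem on the holomorphicity of the Abel--Jacobi map, combined with GAGA and the projectivity of $J(C)$, shows that the induced homomorphism $J(C) \to J^2(X)$ is a morphism of complex tori whose image is an abelian subvariety. Since $J_a^2(X)$ is the union of all such images, a Noetherian dimension-counting argument produces finitely many correspondences whose combined image is already $J_a^2(X)$, giving it the structure of an abelian variety.

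Next I would confirm that $AJ \colon CH^2(X)_{\text{alg}} \to J_a^2(X)$ is a regular homomorphism: any algebraic family of algebraically trivial codimension-2 cycles parametrized by a smooth variety $T$ induces a holomorphic, hence by GAGA algebraic, map $T \to J_a^2(X)$. For universality, given another regular homomorphism $\phi \colon CH^2(X)_{\text{alg}} \to A$ to some abelian variety $A$, for each correspondence $\Gamma$ as above both $\phi \circ \Gamma_*$ and $AJ \circ \Gamma_*$ are morphisms of abelian varieties out of $J(C)$. The plan is to construct the unique $u \colon J_a^2(X) \to A$ factoring $\phi$ through $AJ$ by showing that $\ker(AJ \circ \Gamma_*) \subseteq \ker(\phi \circ \Gamma_*)$ for each such $\Gamma$, and then to glue these factorizations coherently across all choices of $\Gamma$.

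The hard part is precisely this kernel containment: one must rule out ``transcendental'' classes in $CH^2(X)_{\text{alg}}$ that are Abel--Jacobi trivial yet detected by some $\phi$. This demands deep codimension-2 input, in the vein of the Bloch decomposition and the Merkurjev--Suslin theorem, which together control the torsion and divisibility of $CH^2(X)_{\text{alg}}$ finely enough to enforce the desired inclusion. The codimension-2 hypothesis is essential here; the analogous universality statement for $CH^k(X)_{\text{alg}}$ with $k \geq 3$ remains open precisely because this kernel-containment step has no known substitute.
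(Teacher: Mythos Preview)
The paper does not prove this theorem at all: it is stated as a result of Murre, with a citation to \cite[\S 6.7, Thm.~I]{Murre:1994aa}, and is used as a black box in the subsequent Propositions. There is therefore no ``paper's own proof'' to compare against.

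Your sketch is a reasonable outline of how Murre's actual argument runs, and you have correctly located the substantive difficulty (the kernel containment requiring Merkurjev--Suslin type input in codimension~2). But for the purposes of this paper, no proof is expected or supplied; the appropriate response here is simply to cite Murre, as the author does.
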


\begin{theorem}[{Murre (see \cite[p. 268]{Lewis:1989aa})}] \label{isomorphism}
Suppose that $CH^2(X)_{\emph{alg}}$ is finite-dimensional. Then the homomorphism $AJ \colon CH^2(X)_{\emph{alg}} \rightarrow J_a^2(X)$ of Theorem \ref{universal} is an isomorphism.
\end{theorem}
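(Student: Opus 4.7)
The plan is to exploit finite-dimensionality to construct, from the surjective regular homomorphism $AJ$, a near-inverse emanating from the Jacobian of a curve, and thereby deduce injectivity. Surjectivity of $AJ$ is immediate from the definition of $J_a^2(X)$ as the image of the Abel--Jacobi map, so the entire content of the statement is injectivity.

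I would begin by appealing to the hypothesis of Definition \ref{finitedimensional}: there exist a smooth curve $E$ and a cycle class $\Gamma \in CH^2(E \times X)$ for which $\Gamma_* \colon CH_0(E)_{\text{alg}} \twoheadrightarrow CH^2(X)_{\text{alg}}$ is surjective. Identifying $CH_0(E)_{\text{alg}}$ with the Jacobian $J(E)$ via Abel's theorem---whose Abel--Jacobi map is the identity---and using the functoriality of $AJ$ under correspondences to commute $\Gamma_*$ past $AJ$, one obtains that the composition $\tilde{\Gamma} := AJ \circ \Gamma_* \colon J(E) \rightarrow J_a^2(X)$ is a surjective homomorphism of abelian varieties. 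I would then apply Poincar\'{e}'s complete reducibility theorem to $\tilde{\Gamma}$: this produces an abelian subvariety $A \subset J(E)$ complementary to $B := \ker^0(\tilde{\Gamma})$ (meaning $A \cap B$ is finite and $A + B = J(E)$), with $\tilde{\Gamma}|_A$ an isogeny. Choosing an isogeny inverse $\psi \colon J_a^2(X) \rightarrow A$ with $\tilde{\Gamma} \circ \psi = n \cdot \text{id}_{J_a^2(X)}$ for some $n \geq 1$, I would define $\sigma := \Gamma_* \circ \psi \colon J_a^2(X) \rightarrow CH^2(X)_{\text{alg}}$; the identity $AJ \circ \sigma = n \cdot \text{id}_{J_a^2(X)}$ is then immediate.

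The hard part is to establish the converse identity $\sigma \circ AJ = n \cdot \text{id}_{CH^2(X)_{\text{alg}}}$, from which injectivity of $AJ$ follows directly (modulo a separate treatment of $n$-torsion in $\ker(AJ)$). Writing any $z \in CH^2(X)_{\text{alg}}$ as $z = \Gamma_*(y)$ with $y = a + b \in A \oplus B$ (up to the finite ambiguity of $A \cap B$), this converse identity reduces to showing that $\Gamma_*(B) = 0$. This vanishing is the main obstacle: the image $\Gamma_*(B)$ is an algebraically parameterized subgroup of $\ker(AJ) \subset CH^2(X)_{\text{alg}}$, and its triviality demands invoking the \emph{universal} regularity of $AJ$ afforded by Theorem \ref{universal}. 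Universality forces any nonconstant algebraic family of cycle classes in $CH^2(X)_{\text{alg}}$ to be detected by some regular homomorphism, and hence by $AJ$; contrapositively, the $B$-parameterized family $b \mapsto \Gamma_*(b)$, being annihilated by $AJ$, must vanish identically. A final handling of $n$-torsion in $\ker(AJ)$, again facilitated by the universal property applied to multiplication-by-$n$ maps on candidate receiving abelian varieties, completes the injectivity argument and yields the claimed isomorphism.
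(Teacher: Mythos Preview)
The paper does not itself prove this theorem; it is quoted as a result of Murre (via Lewis). So there is no in-paper argument to compare against, and I assess your sketch on its own terms.

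Your overall architecture---construct $\sigma$ with $AJ\circ\sigma=n\cdot\text{id}$ via Poincar\'e reducibility, then seek the converse identity---is the standard one. The genuine gap lies exactly where you flag ``the hard part.'' Your justification for $\Gamma_*(B)=0$ reads: universality forces any nonconstant algebraic family in $CH^2(X)_{\text{alg}}$ to be detected by some regular homomorphism, hence by $AJ$. But universality asserts only that every regular homomorphism \emph{factors through} $AJ$; it does not assert that regular homomorphisms exist in sufficient supply to detect a given family. Once you know $AJ\circ\Gamma_*|_B=0$, universality in fact tells you that \emph{every} regular homomorphism annihilates $\Gamma_*(B)$---the opposite of what you need. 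Your premise ``any nonconstant algebraic family is detected by some regular homomorphism'' is therefore not a consequence of universality; it is a reformulation of the very injectivity you are trying to prove. The same circularity afflicts your proposed treatment of $n$-torsion in $\ker(AJ)$: passing to multiplication-by-$n$ on a target abelian variety only manufactures another regular homomorphism, which again factors through $AJ$ and hence cannot witness anything in $\ker(AJ)$.

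What Murre's actual argument supplies, and what your sketch omits entirely, is the theorem---due to Murre and resting on Merkurjev--Suslin's work on $K_2$ together with the Bloch--Ogus spectral sequence---that $AJ$ is injective on the torsion of $CH^2(X)_{\text{alg}}$; equivalently, $\ker(AJ)$ is torsion-free. This is deep and specific to codimension two; it is not derivable from universal regularity alone, and its analogue fails in higher codimension. It is this torsion statement, not the universal property, that combines with your Poincar\'e-reducibility setup to force $\Gamma_*|_B=0$ and close the argument.
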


These conjectures control the behavior of algebraic cycles on a smooth variety $X$:

\begin{conjecture}[{Bloch \cite[p. 2]{Bloch:1976aa}, Lewis \cite[p. 268]{Lewis:1989aa}}] \label{lewis}
Suppose that $H^{2,0}(X) = 0$, and assume that Conjecture \ref{ghc} holds for $X$. Then $CH^2(X)_{\emph{alg}}$ is finite-dimensional, and in particular satisfies $CH^2(X)_{\emph{alg}} \cong J_a^2(X)$ by Theorem \ref{isomorphism} above.
\end{conjecture}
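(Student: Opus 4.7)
Note first that once the finite-dimensionality of $CH^2(X)_{\text{alg}}$ is established, the isomorphism $CH^2(X)_{\text{alg}} \cong J_a^2(X)$ is automatic by Theorem \ref{isomorphism}. Thus the real task is to produce a smooth curve $E$ and a cycle class $\Gamma \in CH^2(E \times X)$ for which the induced map $\Gamma_* \colon CH_0(E)_{\text{alg}} \rightarrow CH^2(X)_{\text{alg}}$ is surjective.

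By Theorem \ref{universal}, the target $J_a^2(X)$ is an abelian variety, so by Weil's theorem it is a quotient of the Jacobian of some smooth curve $E$; taking $E$ to be cut out by a sufficiently general Lefschetz pencil in $X$, the resulting incidence correspondence yields a cycle $\Gamma \in CH^2(E \times X)$ for which $AJ \circ \Gamma_*$ surjects onto $J_a^2(X)$. The substantive question is then whether the kernel of $AJ \colon CH^2(X)_{\text{alg}} \rightarrow J_a^2(X)$ vanishes rationally: if so, $\Gamma_*$ surjects onto $CH^2(X)_{\text{alg}}$ itself and finite-dimensionality follows.

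To handle the kernel, I would carry out a Bloch--Srinivas-type decomposition of the diagonal. The hypothesis $H^{2,0}(X) = 0$ forces $H^2(X, \mathbb{Q})$ to be purely of type $(1,1)$ and hence algebraic by the Lefschetz $(1,1)$ theorem, while the dual statement via Hard Lefschetz constrains $H^{2\text{dim}(X) - 2}(X, \mathbb{Q})$ to arise from cycle classes of codimension $\geq 1$. Assuming Conjecture \ref{ghc}, these Hodge-theoretic coniveau statements may be promoted to an algebraic decomposition in $CH^{\text{dim}(X)}(X \times X)_{\mathbb{Q}}$ of the form $\Delta_X = Z_1 + Z_2$, with $Z_1$ supported on $C \times X$ for some curve $C$ and $Z_2$ supported on $X \times Y$ for some $Y \subset X$ of codimension $\geq 2$. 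Letting this decomposition act on any class $\alpha \in \ker(AJ) \subset CH^2(X)_{\text{alg}}$ then recovers $\alpha$ rationally as the image of a zero-cycle from $C$, after which one absorbs $C$ into $E$ via a further correspondence.

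The main obstacle is the decomposition-of-the-diagonal step. In the classical Bloch--Srinivas setup such a decomposition is produced from a strong Chow-theoretic input on $CH_0(X)$; here, by contrast, one possesses only the Hodge-theoretic input $H^{2,0}(X) = 0$, and it is precisely Conjecture \ref{ghc} that must be invoked to bridge this gap — promoting the available coniveau information in cohomology to an honest algebraic cycle decomposition on $X \times X$. Once this lift is in hand, the remaining manipulations, together with the action of $\Gamma$ constructed above, are standard.
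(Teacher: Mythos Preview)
The paper does not prove this statement: it is recorded as a \emph{Conjecture} (attributed to Bloch and Lewis) and is subsequently used only as a hypothesis, e.g.\ in Proposition~\ref{rationally}, where ``Conjectures~\ref{ghc} and~\ref{lewis}'' are both assumed. So there is no proof in the paper to compare against; your proposal is an attempt to prove an open (conditional) statement.

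As for the proposal itself, there is a genuine gap at the point you yourself flag as the ``main obstacle.'' You claim that assuming Conjecture~\ref{ghc} for $X$ lets you promote the Hodge-theoretic input $H^{2,0}(X)=0$ to a Chow-level decomposition $\Delta_X = Z_1 + Z_2$ in $CH^{\dim X}(X\times X)_{\mathbb{Q}}$ with the stated support conditions. This step is not justified. First, the hypothesis is GHC for $X$, not for $X\times X$; the decomposition you want concerns cycle classes on the product, and GHC for $X$ says nothing about algebraicity of Hodge classes on $X\times X$. Second, even if one had the relevant K\"unneth components of $[\Delta_X]$ as algebraic cycles with the desired support (which already goes beyond GHC for $X$), that would be an identity in cohomology, not in $CH^{\dim X}(X\times X)_{\mathbb{Q}}$. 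Lifting such a cohomological decomposition to the Chow group is precisely the direction Bloch--Srinivas does \emph{not} give you; it is tantamount to the conjectural Bloch--Beilinson/Murre machinery, and is essentially the content of the conjecture you are trying to prove rather than a tool available to prove it.

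In short: you have correctly located where the difficulty lies, but the assertion that GHC for $X$ ``bridges this gap'' is exactly what remains unproven, and your sketch does not supply an argument for it.
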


\begin{theorem}[{Jannsen \cite[(7)]{Jannsen:2000aa}}] \label{griffiths}
Assume Conjectures \ref{blochbeilinson} and \ref{lefschetz}. If $H^3(X, \mathbb{Q}) = N^1H^3(X, \mathbb{Q})$, where $N$ is the coniveau filtration on $H^3(X, \mathbb{Q})$, then the Griffiths group $Gr^2(X)$ is of torsion.
\end{theorem}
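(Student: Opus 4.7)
The plan is to combine the Bloch--Beilinson filtration on $CH^2(X)_{\mathbb{Q}}$ with the coniveau hypothesis on $H^3(X, \mathbb{Q})$, using the Lefschetz standard conjecture to pass from cohomological identities to Chow-theoretic identities via the resulting semisimplicity of motives. The aim is to show that for any $\gamma \in CH^2(X)_{\text{hom}, \mathbb{Q}}$ one has $\gamma \in CH^2(X)_{\text{alg}, \mathbb{Q}}$, so that $Gr^2(X) \otimes \mathbb{Q} = 0$.

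First I would use the coniveau hypothesis to build a geometric model: by the definition of $N^1$, $H^3(X, \mathbb{Q})$ is supported on a codimension-one closed subset $Y \subset X$, and resolving $\tilde{Y} \to Y$ and invoking Deligne's mixed Hodge theory yields a Gysin surjection $H^1(\tilde{Y}, \mathbb{Q}) \twoheadrightarrow H^3(X, \mathbb{Q})$. Taking iterated general hyperplane sections of $\tilde{Y}$, the weak Lefschetz theorem provides a smooth curve $C$ with a surjection $H^1(C, \mathbb{Q}) \twoheadrightarrow H^3(X, \mathbb{Q})$ induced by the algebraic correspondence $\Gamma \in CH^2(C \times X)_{\mathbb{Q}}$ coming from the graph of the composition $C \hookrightarrow \tilde{Y} \to Y \hookrightarrow X$.

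Next I would invoke Conjecture \ref{lefschetz}, which by Kleiman's theorem implies the algebraicity of all Künneth projectors $\pi_k \in CH^{\dim X}(X \times X)_{\mathbb{Q}}$ and, combined with Jannsen's semisimplicity of numerical motives, the semisimplicity of the category of homological motives (homological and numerical equivalence coinciding under the standard conjecture). The surjection of motives $h^1(C) \twoheadrightarrow h^3(X)$ then splits, producing an algebraic correspondence $\Psi$ from $X$ to $C$ with $\Gamma \circ \Psi$ homologically equivalent to $\pi_3$. Bloch--Beilinson axiom \ref{blochbeilinson}.(3) asserts that each $\pi_{k,*}$ vanishes on $Gr^1 CH^2(X)_{\mathbb{Q}} = F^1/F^2$ unless $k = 3$; since the projectors sum to the identity, $\pi_{3,*} = (\Gamma \circ \Psi)_*$ acts as the identity on $Gr^1 CH^2(X)_{\mathbb{Q}}$. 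For any $\gamma \in CH^2(X)_{\text{hom}, \mathbb{Q}} = F^1$, this yields
\begin{equation*}\gamma \equiv \Gamma_*(\Psi_*(\gamma)) \pmod{F^2 CH^2(X)_{\mathbb{Q}}}.\end{equation*}
Since $\Psi_*(\gamma) \in CH^1(C)_{\text{hom}, \mathbb{Q}} = \operatorname{Pic}^0(C)_{\mathbb{Q}} = CH^1(C)_{\text{alg}, \mathbb{Q}}$ on a smooth projective curve, and algebraic correspondences preserve algebraic triviality, we obtain $\Gamma_*(\Psi_*(\gamma)) \in CH^2(X)_{\text{alg}, \mathbb{Q}}$, so $\gamma \in CH^2(X)_{\text{alg}, \mathbb{Q}} + F^2 CH^2(X)_{\mathbb{Q}}$.

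The main obstacle is then to close the argument by showing $F^2 CH^2(X)_{\mathbb{Q}} \subseteq CH^2(X)_{\text{alg}, \mathbb{Q}}$, which together with the above would give $CH^2(X)_{\text{hom}, \mathbb{Q}} = CH^2(X)_{\text{alg}, \mathbb{Q}}$ and hence the torsion conclusion for $Gr^2(X)$. I would attempt this via the analogous application of axiom \ref{blochbeilinson}.(3) to $\pi_2$ acting on $F^2 = Gr^2 CH^2(X)_{\mathbb{Q}}$ (using $F^3 = 0$), where $\pi_{2,*}$ acts as the identity: the algebraic part of $H^2(X, \mathbb{Q})$ is spanned by divisor classes by the Lefschetz $(1,1)$-theorem, so the corresponding contribution to $\pi_{2,*}(F^2)$ consists of intersections of divisors with hom-trivial factors and is algebraically trivial. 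Controlling any contribution to $\pi_2$ coming from the potentially non-algebraic part of $H^2(X, \mathbb{Q})$ is the delicate point, requiring the more detailed motivic argument in Jannsen's original reference.
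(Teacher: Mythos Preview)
The paper does not give its own proof of this theorem; it is stated as a result of Jannsen and cited from \cite[(7)]{Jannsen:2000aa}. There is therefore no in-paper argument to compare against.

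Your outline is faithful to Jannsen's strategy: use the coniveau hypothesis to produce a correspondence from a curve surjecting onto $H^3(X,\mathbb{Q})$, invoke the Lefschetz standard conjecture to obtain algebraic K\"unneth projectors and semisimplicity so that this surjection splits motivically, and then use axiom (iii) of Conjecture~\ref{blochbeilinson} to conclude that $\pi_{3,*}$ acts as the identity on $Gr^1_F CH^2(X)_{\mathbb{Q}}$ and factors through $CH^1(C)_{\mathrm{hom},\mathbb{Q}} = CH^1(C)_{\mathrm{alg},\mathbb{Q}}$. This part is correct and is the heart of the argument.

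You are also right that the residual step, namely $F^2 CH^2(X)_{\mathbb{Q}} \subseteq CH^2(X)_{\mathrm{alg},\mathbb{Q}}$, is genuinely needed and does not fall out of what you have written. Your proposed attack via $\pi_2$ runs into exactly the difficulty you flag: the projector $\pi_2$ is an algebraic cycle on $X \times X$ whose \emph{cohomology} class decomposes along $H^{2n-2}(X) \otimes H^2(X)$, but this does not mean $\pi_2$ itself decomposes as a sum of products of divisors and curves in $CH^*(X \times X)$, so its action on $F^2 CH^2$ cannot be read off from intersections with divisors. Handling this requires the finer motivic machinery in Jannsen's paper, and your deferral to that reference is the honest move. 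As a self-contained proof, then, your proposal is incomplete at precisely the point you identify; as a summary of the argument's architecture with a pointer to the source for the missing piece, it is accurate.
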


\subsection{Specialization to the case $X = V$}

When we take $X = V$ in the above, all cleanness and ambience conditions become vacuous.

We specialize the definitions of decency:

\begin{definition}
If $X \subset X$ satisfies the condition of Definition \ref{pair}, we will say that $X$ is \textit{decent}.
\end{definition}

\begin{definition}
If, for some adequate equivalence relation $E$, $X \subset X$ satisfies the conditions of Definition \ref{epair}, then we will say that $X$ is $E$-\textit{decent}.
\end{definition}

In particular, $X$ is $E$-decent if and only if the integral intersection pairing on $N^2(X)$ is positive definite and the quotient map $E^2(X) \rightarrow N^2(X)$ has finite kernel.

\begin{example}[A non-$CH$-decent fourfold] \label{nondecent}
In a generalization of Mumford's theorem (see \cite[Thm. 3.13]{Voisin:2014aa}), Laterveer demonstrates that for any smooth fourfold $X$ for which $H^{2,0}(X) \neq 0$ and the Lefschetz standard conjecture holds (this class includes the abelian and hyper-K\"{a}hler fourfolds, by Charles and Markman \cite{Charles:2013aa}), the group $\text{ker} \left( CH^2(X) \rightarrow N^2(X) \right)$ is not just infinite but actually of infinite rank.

Indeed, if $\text{ker} \left( CH^2(X) \rightarrow N^2(X) \right) \otimes \mathbb{Q}$ were finitely generated over $\mathbb{Q}$, then so too would be $CH^2(X)_{\mathbb{Q}}$, as $N^2(X)_{\mathbb{Q}}$ is finitely generated. Yet upon selecting a finite $\mathbb{Q}$-basis for $CH^2(X)_{\mathbb{Q}}$, we would find that representatives of these basis elements are supported on a closed algebraic subset $X' \subset X$ for which $CH_2(X')_{\mathbb{Q}} \rightarrow CH_2(X)_{\mathbb{Q}}$ is surjective. This would contradict the main result of Laterveer \cite[Thm. 3.1]{Laterveer:2015aa}. (I would like to thank Robert Laterveer for this argument.)
\end{example}

We have the following specialization of Theorem \ref{ebound}:

\begin{theorem} \label{tbound}
Let $X$ be $E$-decent. Then for any $s \in \mathbb{Z}$, at most finitely many $E$-equivalence classes in $E^2(X)$ are representable by a smooth surface $S$ in $X$ satisfying $\emph{deg} \left( c_1^2(\mathcal{T}_S) - c_2(\mathcal{T}_S) \right) \leq s$.
\end{theorem}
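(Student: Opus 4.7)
The plan is to deduce Theorem \ref{tbound} as an immediate specialization of Theorem \ref{ebound} by checking that when $V = X$, the cleanness and $E$-ambience hypotheses there become vacuous, so that the $E$-decency hypothesis alone suffices.

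First I would verify that the tautological embedding $i = \mathrm{id}_X \colon X \hookrightarrow X$ is clean in the sense of Definition \ref{clean}. Since the normal bundle $\mathcal{N}_{X/X}$ is the zero bundle, one has $c_2(\mathcal{N}_{X/X}) = 0$, which is trivially the restriction to $X$ of the zero cycle class on $V = X$. So the setup of Definition \ref{epair} applies, and by hypothesis both conditions in that definition hold: the pairing $\alpha \mapsto \mathrm{deg}([X] \cdot \alpha \cdot \alpha)$ on $N^2(X)$ is positive definite, and the quotient $E^2(X) \to N^2(X)$ has finite kernel.

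Next I would observe that in this tautological setting every smooth surface $S \subset X$ is automatically $E$-ambient: indeed, $i^* \colon E^2(X) \to E^2(X)$ is the identity, so $[S] = i^*([S])$ trivially exhibits $[S]$ as lying in the image of $i^*$. Thus the $E$-ambience hypothesis imposes no restriction on $S$ in Theorem \ref{ebound} here.

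Finally I would invoke Theorem \ref{ebound} applied to the clean embedding $X \subset X$, which by the two observations above is an $E$-decent pair. The conclusion of that theorem gives finitely many $E$-equivalence classes in $E^2(X)$ representable by smooth $E$-ambient surfaces $S$ with $\mathrm{deg}(c_1^2(\mathcal{T}_S) - c_2(\mathcal{T}_S)) \leq s$, and since $E$-ambience is vacuous this is exactly the asserted finiteness over all smooth surfaces. There is no real obstacle here; the content of the result is entirely carried by Theorem \ref{ebound}, and the only thing to check is the (essentially trivial) verification that the tautological embedding is a valid instance of the general framework.
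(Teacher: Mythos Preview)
Your proposal is correct and takes essentially the same approach as the paper: the paper's proof is the single sentence ``We apply Theorem \ref{ebound} to the clean embedding $X \subset X$,'' having already noted in the subsection preamble that cleanness and ambience become vacuous when $V = X$. You have simply spelled out those vacuity checks explicitly, which is fine.
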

\begin{proof}
We apply Theorem \ref{ebound} to the clean embedding $X \subset X$.
\end{proof}

We also have analogous specializations of Corollaries \ref{enongeneral}, \ref{erational}, and \ref{eeuler}.

As an example, we record a general result in the flavor of that of Ciliberto and Di Gennaro:

\begin{theorem} \label{cdg}
Any smooth fourfold $X$ with vanishing first Chern class and Picard rank $\rho(X) = 1$ is decent.
\end{theorem}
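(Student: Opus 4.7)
The plan is to reduce this to Proposition \ref{picdecent} by specializing to the tautological embedding $X \subset X$, i.e., by taking the ambient variety $V$ there to be $X$ itself. Since $X$ is decent precisely when the pair $X \subset X$ satisfies Definition \ref{pair}, it suffices to verify the hypotheses of Proposition \ref{picdecent} in this degenerate case.

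Three things must be checked. First, the tautological embedding is clean in the sense of Definition \ref{clean}: the normal bundle $\mathcal{N}_{X/X}$ is the zero bundle, so its second Chern class vanishes and is trivially the restriction of a cycle class from the ambient variety. Second, the displayed equality $[X] = \prod_{k=1}^{\dim(V) - 4} c_1(L_k)$ becomes, in the case $V = X$, an equality between the fundamental class of $X$ inside itself and the empty product, both of which equal $1 \in N^0(X)$, so no ample line bundles $L_k$ are required. Third, the Picard rank hypothesis $\rho(V) = 1$ in Proposition \ref{picdecent} is precisely the assumed $\rho(X) = 1$.

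Having verified these, I would invoke Proposition \ref{picdecent} directly to conclude that $X \subset X$ is a decent pair, hence that $X$ is decent. The only step that might conceivably pose an obstacle is confirming that the Hodge--Riemann argument inside the proof of Proposition \ref{picdecent} still functions under the degenerate empty product: tracing through, the primitive subspaces reduce to the classical primitive parts $H^{p,p}(X, \mathbb{Q})_{\text{prim}}$ with respect to a Kähler form $\omega$, and the assumption $\rho(X) = 1$ annihilates the only potentially negative-definite summand $L \cdot H^{1,1}(X, \mathbb{Q})_{\text{prim}}$ in the Lefschetz decomposition of $H^{2,2}(X, \mathbb{Q})$. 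The pairing $\alpha \mapsto \int_X \alpha \wedge \overline{\alpha}$ is thereby positive definite on $H^{2,2}(X, \mathbb{Q})$ and, as in the original proof, the positivity passes to $N^2(X)$ as a subquotient. This step is already fully subsumed by the proof of Proposition \ref{picdecent}, so in practice the argument amounts to citing that proposition in the degenerate form just described.
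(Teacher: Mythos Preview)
Your proposal is correct and matches the paper's proof exactly: the paper's argument is the single line ``We apply Proposition \ref{picdecent} to the clean embedding $X \subset X$,'' and your three verifications (cleanness via the zero normal bundle, the empty product identity $[X] = 1$, and $\rho(V) = \rho(X) = 1$) are precisely what is implicit in that citation. Your additional paragraph unwinding the Hodge--Riemann argument in the degenerate case is more than the paper provides but is a faithful tracing of the cited proposition's proof.
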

\begin{proof}
We apply Proposition \ref{picdecent} to the clean embedding $X \subset X$.
\end{proof}

\subsection{Examples of $E$-decent fourfolds}

We exhibit examples, some conjectural, of smooth fourfolds which are $E$-decent over finer equivalence relations $E$. 

We first study maps $E^2(X) \rightarrow N^2(X)$. In what follows, we let $X$ be a smooth variety:

\begin{proposition} \label{homologically}
The quotient map $B^2(X) \rightarrow N^2(X)$ has finite kernel.
\end{proposition}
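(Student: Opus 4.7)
The plan is to show the stronger statement that $\ker(B^2(X) \to N^2(X)) \otimes \mathbb{Q} = 0$; since $B^2(X)$ is the image of the cycle class map $CH^2(X) \to H^4(X, \mathbb{Z})$, it is a subgroup of a finitely generated abelian group and hence itself finitely generated. Rational triviality of the kernel is therefore equivalent to its integral finiteness. The task reduces to showing that the intersection pairing $B^2(X)_{\mathbb{Q}} \times B^{n-2}(X)_{\mathbb{Q}} \to \mathbb{Q}$ (with $n = \dim X$) has no left kernel.

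Assuming $n \geq 4$ (with $n \leq 3$ handled by easier variants of the same idea), I would fix an ample line bundle $L$ on $X$ and set $\omega := c_1(L)$. Hard Lefschetz supplies an isomorphism $\omega^{n-4} \cup \colon H^4(X, \mathbb{Q}) \to H^{2n-4}(X, \mathbb{Q})$, which since $\omega$ is algebraic descends to an injection $B^2(X)_{\mathbb{Q}} \hookrightarrow B^{n-2}(X)_{\mathbb{Q}}$. Consequently, if $\alpha \in B^2(X)_{\mathbb{Q}}$ lies in the left kernel of the pairing, then $\int_X \alpha \cup \beta \cup \omega^{n-4} = 0$ for every $\beta \in B^2(X)_{\mathbb{Q}}$, and it suffices to show that the symmetric form $Q \colon (\alpha, \beta) \mapsto \int_X \alpha \cup \beta \cup \omega^{n-4}$ is non-degenerate on $B^2(X)_{\mathbb{R}}$.

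For this step, I would invoke the Hard Lefschetz decomposition of Hodge classes,
\begin{equation*}
H^{2,2}(X)_{\mathbb{Q}} = H^{2,2}_{\text{prim}, \mathbb{Q}} \oplus \omega \cdot H^{1,1}_{\text{prim}, \mathbb{Q}} \oplus \omega^2 \cdot \mathbb{Q},
\end{equation*}
and argue that $B^2(X)_{\mathbb{Q}}$ respects it. The third summand is spanned by $\omega^2$, so lies in $B^2_{\mathbb{Q}}$; the middle one lies in $B^2_{\mathbb{Q}}$ because the Lefschetz $(1,1)$ theorem identifies $H^{1,1}(X, \mathbb{Q}) = B^1(X)_{\mathbb{Q}}$, whence $H^{1,1}_{\text{prim}, \mathbb{Q}} \subseteq B^1(X)_{\mathbb{Q}}$ and multiplication by the algebraic class $\omega$ keeps it in $B^2$; and the first is then recovered by subtracting the other two algebraic pieces from $\alpha \in B^2_{\mathbb{Q}}$. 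The Hodge--Riemann bilinear relations now show that $Q$ is orthogonal across the three summands and definite on each one (positive on the primitive and $\omega^2$ summands, negative on the middle summand, by the usual sign computations), forcing non-degeneracy on $B^2(X)_{\mathbb{R}}$.

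The main subtlety is that an arbitrary element of $H^{2,2}_{\text{prim}, \mathbb{Q}}$ need not be algebraic (this would be the Hodge conjecture in codimension two); the argument circumvents this not by proving algebraicity of primitive $(2,2)$-classes directly, but by extracting the decomposition of a given $\alpha \in B^2_{\mathbb{Q}}$ as a sum of algebraic pieces via subtraction, which succeeds because the Hodge conjecture is already known in codimension one and so handles the middle summand. This trick is what pins the proposition to codimension two: the analogous decomposition for $B^k \to N^k$ with $k \geq 3$ involves primitive summands of higher bidegree whose algebraicity is conjectural, and there the same argument would only go through assuming the Hodge conjecture.
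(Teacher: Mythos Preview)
Your argument is correct. The paper, by contrast, dispatches the proposition in one line: it observes that $B^2(X)$ is finitely generated and cites Fulton \cite[19.3.2.~(iii)]{Fulton:1984aa} (Lieberman's theorem) for the fact that the quotient $B^2(X) \to N^2(X)$ kills only torsion. What you have written is essentially an unpacking of the Hodge-theoretic content behind that citation in the codimension-$2$ case: you reprove Lieberman's result directly via the Lefschetz decomposition and Hodge--Riemann relations, with the Lefschetz $(1,1)$ theorem supplying algebraicity of the middle summand. So the mathematical substance is the same; the difference is that you give a self-contained proof where the paper invokes a reference. Your closing remark is also apt---your particular decomposition-and-subtraction trick is genuinely tied to codimension $2$ (and, dually, dimension $2$), whereas the cited result, to the extent it is asserted in full generality, leans on deeper input.
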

\begin{proof}
The natural quotient map $B^2(X) \rightarrow N^2(X)$ annihilates only the torsion subgroup of the finitely generated group $B^2(X)$ (see \cite[19.3.2. (iii)]{Fulton:1984aa}), which is necessarily finite.
\end{proof}

\begin{proposition} \label{algebraically}
Suppose that $X$ satisfies $H^3(X, \mathbb{Q}) = 0$. Assume Conjectures \ref{blochbeilinson} and \ref{lefschetz}. Then the quotient map $A^2(X) \rightarrow B^2(X)$ is an isomorphism.
\end{proposition}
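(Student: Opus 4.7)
The strategy is to identify the kernel of the natural surjection $A^2(X) \twoheadrightarrow B^2(X)$ with the Griffiths group $Gr^2(X) = CH^2(X)_{\text{hom}}/CH^2(X)_{\text{alg}}$ and show this kernel vanishes. Surjectivity is automatic since $CH^2(X)_{\text{alg}} \subset CH^2(X)_{\text{hom}}$, so the whole task reduces to proving $Gr^2(X) = 0$.

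First, I would apply Theorem \ref{griffiths} of Jannsen. Its hypothesis $H^3(X, \mathbb{Q}) = N^1 H^3(X, \mathbb{Q})$ is trivially satisfied here, since both sides are zero under our assumption $H^3(X, \mathbb{Q}) = 0$. Conjectures \ref{blochbeilinson} and \ref{lefschetz} then deliver that $Gr^2(X)$ is a torsion group. Morally this works by showing, under Bloch--Beilinson, that $F^2 CH^2(X)_{\mathbb{Q}} = CH^2(X)_{\text{alg}, \mathbb{Q}}$ on the one hand, while $F^1 CH^2(X)_{\mathbb{Q}} = F^2 CH^2(X)_{\mathbb{Q}}$ on the other; this latter equality is the concrete reflection of $H^3(X, \mathbb{Q}) = 0$ controlling the graded piece $F^1/F^2$.

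To promote the torsion conclusion to integral vanishing, I would then use $H^3(X, \mathbb{Q}) = 0$ a second time, this time through Theorem \ref{universal}: the bound $\dim(J_a^2(X)) \leq \tfrac{1}{2} \dim(H^3(X, \mathbb{Q})) = 0$ forces $J_a^2(X) = 0$, and in fact the intermediate Jacobian $J^2(X)$ itself vanishes, so the Abel--Jacobi map $CH^2(X)_{\text{hom}} \to J^2(X)$ is identically zero. Combining this collapse with Jannsen's torsion statement, and using the universality of Abel--Jacobi on the algebraically trivial part, should pin $Gr^2(X)$ down to the trivial group.

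The main obstacle is exactly this last step: Jannsen's theorem natively produces only a torsion Griffiths group, and promoting torsion to zero at the integral level requires an additional input beyond Bloch--Beilinson and the Lefschetz standard conjecture. The vanishing of $J^2(X)$ removes the natural Hodge-theoretic obstruction, but a careful integral argument is still needed to rule out residual torsion rather than merely conclude the rational isomorphism $A^2(X)_{\mathbb{Q}} \cong B^2(X)_{\mathbb{Q}}$.
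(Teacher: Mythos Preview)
Your reduction to showing $Gr^2(X) = 0$ is correct, and your application of Jannsen's Theorem \ref{griffiths} to conclude that $Gr^2(X)$ is torsion is exactly what the paper does. The gap you yourself flag, however, is real: the Abel--Jacobi argument you sketch does not close it. Knowing $J^2(X) = 0$ tells you that the Abel--Jacobi map on $CH^2(X)_{\text{hom}}$ is identically zero, but this gives no leverage on torsion in the quotient $CH^2(X)_{\text{hom}}/CH^2(X)_{\text{alg}}$; it simply says that a certain map out of $CH^2(X)_{\text{hom}}$ vanishes, which is compatible with $Gr^2(X)$ being any torsion group whatsoever. Universality of $AJ$ on $CH^2(X)_{\text{alg}}$ likewise concerns the algebraically trivial part, not the Griffiths quotient.

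The paper supplies the missing torsion-freeness by an entirely different mechanism: the Bloch--Ogus exact sequence
\[
H^3(X,\mathbb{Z}) \rightarrow \Gamma(X,\mathscr{H}^3) \rightarrow A^2(X) \rightarrow B^2(X),
\]
together with the Bloch--Srinivas result that the Zariski sheaf $\mathscr{H}^3$ is torsion free. Since $H^3(X,\mathbb{Q}) = 0$ forces $H^3(X,\mathbb{Z})$ to be torsion, the first map lands in a torsion-free group and is therefore zero; hence $\Gamma(X,\mathscr{H}^3)$ injects into $A^2(X)$ with image exactly $Gr^2(X)$. Now $Gr^2(X)$ is simultaneously torsion free (being isomorphic to $\Gamma(X,\mathscr{H}^3)$) and torsion (by Jannsen), hence zero. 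This is the integral input you were missing.
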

\begin{proof}
The work of Bloch and Ogus \cite[(7.5)]{Bloch:1974aa} gives an exact sequence
\begin{equation*}H^3(X, \mathbb{Z}) \rightarrow \Gamma(X, \mathscr{H}^3) \rightarrow A^2(X) \rightarrow B^2(X),\end{equation*}
where the sheaf $\mathscr{H}^3$ is defined in \cite[(7.4)]{Bloch:1974aa}. In fact, $\mathscr{H}^3$ is torsion free by results of Bloch and Srinivas \cite[p. 1240]{Bloch:1983aa} (see also \cite[Thm. 6.15]{Voisin:2014aa}). The assumption $H^3(X, \mathbb{Q}) = 0$ thus ensures that the first map is the zero map, and that the torsion free group $\Gamma(X, \mathscr{H}^3)$ injects into $A^2(X)$. Its image, the kernel of the third map, is precisely the Griffiths group $Gr^2(X)$, which by Theorem \ref{griffiths} we may assume to be of torsion. Thus the torsion free group $\Gamma(X, \mathscr{H}^3)$ is isomorphic to the torsion group $Gr^2(X)$, and we conclude that both are zero.
\end{proof}

\begin{proposition} \label{rationally}
Suppose that $X$ satisfies $H^3(X, \mathbb{Q}) = 0$ and $H^{2,0}(X) = 0$. Assume that Conjectures \ref{ghc} and \ref{lewis} hold for $X$. Then the quotient map $CH^2(X) \rightarrow A^2(X)$ is an isomorphism.
\end{proposition}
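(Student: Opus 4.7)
The plan is to observe that the kernel of the quotient map $CH^2(X) \to A^2(X)$ is by definition $CH^2(X)_{\text{alg}}$, so the proposition reduces to showing that this subgroup vanishes under the stated hypotheses. (Surjectivity is automatic since $A^2(X)$ is a quotient of $CH^2(X)$.)

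To obtain the vanishing I would chain together the results on codimension-two cycles already catalogued above. First, since $H^{2,0}(X) = 0$ and Conjecture \ref{ghc} holds for $X$ by assumption, Conjecture \ref{lewis} asserts that $CH^2(X)_{\text{alg}}$ is finite-dimensional in the sense of Definition \ref{finitedimensional}. Theorem \ref{isomorphism} then upgrades this to an isomorphism $CH^2(X)_{\text{alg}} \cong J_a^2(X)$ induced by the restricted Abel--Jacobi map.

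Second, Theorem \ref{universal} supplies the dimension bound
\begin{equation*}
\dim J_a^2(X) \leq \tfrac{1}{2} \dim_{\mathbb{Q}} H^3(X, \mathbb{Q}).
\end{equation*}
The hypothesis $H^3(X, \mathbb{Q}) = 0$ collapses the right-hand side to $0$, so the abelian variety $J_a^2(X)$ is zero-dimensional and hence trivial. Transporting this across the isomorphism of the previous paragraph gives $CH^2(X)_{\text{alg}} = 0$, as desired.

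All of the substantive content is packed into the conjectures and theorems invoked; the only real care needed is in verifying that the hypotheses of each invocation are met in the order used (Conjecture \ref{ghc} to trigger Conjecture \ref{lewis}; finite-dimensionality to trigger Theorem \ref{isomorphism}; vanishing of $H^3$ to collapse the bound of Theorem \ref{universal}). Once this bookkeeping is arranged, the argument is entirely mechanical.
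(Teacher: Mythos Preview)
Your proof is correct and follows essentially the same route as the paper's: identify the kernel of $CH^2(X)\to A^2(X)$ with $CH^2(X)_{\text{alg}}$, invoke Conjecture~\ref{lewis} (under the hypotheses $H^{2,0}(X)=0$ and Conjecture~\ref{ghc}) together with Theorem~\ref{isomorphism} to obtain $CH^2(X)_{\text{alg}}\cong J_a^2(X)$, and then use the dimension bound of Theorem~\ref{universal} and $H^3(X,\mathbb{Q})=0$ to conclude that $J_a^2(X)$ is trivial. The paper compresses this into a single sentence, but the logical chain is identical to yours.
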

\begin{proof}
By Theorems \ref{universal} and \ref{isomorphism}, Conjecture \ref{lewis} predicts that $CH^2(X)_{\text{alg}}$, the kernel of the quotient map $CH^2(X) \rightarrow A^2(X)$, is isomorphic to the trivial group $J_a^2(X)$.
\end{proof}

Recalling Theorem \ref{cdg} and applying the results of this chapter, we now have:

\begin{theorem}
Let $X$ be a smooth fourfold with vanishing first Chern class and $\rho(X) = 1$. Then $X$ is homologically decent.
\end{theorem}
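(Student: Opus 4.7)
The plan is to verify, for the tautological clean embedding $X \subset X$ with $E$ taken to be homological equivalence, the two conditions of Definition \ref{epair} that together characterize $E$-decency. Both conditions have already been handled in isolation elsewhere in the paper, so the argument is essentially a matter of invoking them in sequence.

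First, I would dispatch the positivity clause: the condition that the integral quadratic form $\alpha \mapsto \text{deg}([X] \cdot \alpha \cdot \alpha)$ on $N^2(X)$ be positive definite is exactly the statement that $X$ is decent in the sense of the specialization $X = V$. Since $X$ is a smooth fourfold with vanishing first Chern class and $\rho(X) = 1$, this positivity follows directly from Theorem \ref{cdg}. Second, I would dispatch the kernel clause: the required finiteness of $\ker(B^2(X) \to N^2(X))$ is automatic for any smooth variety by Proposition \ref{homologically}, which rests only on the finite generation of $B^2(X)$ together with the fact that the quotient to $N^2(X)$ annihilates only torsion. Combining these two observations yields precisely the definition of homological decency, so there is no substantive obstacle---the theorem is simply the conjunction of Theorem \ref{cdg} and Proposition \ref{homologically}.
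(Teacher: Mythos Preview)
Your proposal is correct and matches the paper's proof exactly: the paper's argument is the single sentence ``We apply Theorem \ref{cdg} and Proposition \ref{homologically},'' which is precisely the conjunction you describe.
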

\begin{proof}
We apply Theorem \ref{cdg} and Proposition \ref{homologically}.
\end{proof}

Following Beauville \cite{Beauville:1983aa}, we have:

\begin{theorem}
Let $X$ be a smooth hyper-K\"{a}hler fourfold deformation equivalent to $S^{[2]}$ (see \cite[\S 6]{Beauville:1983aa}) for a K3 surface $S$, and suppose that $\rho(X) = 1$. Assume Conjectures \ref{blochbeilinson} and \ref{lefschetz}. Then $X$ is algebraically decent.
\end{theorem}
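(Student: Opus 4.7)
The plan is to unfold the definition of algebraic decency and reduce the problem to two independent verifications. Specializing Definition \ref{epair} to the tautological embedding $X \subset X$, one must establish (i) that the integral pairing $\alpha \mapsto \deg(\alpha \cdot \alpha)$ on $N^2(X)$ is positive definite, and (ii) that the quotient map $A^2(X) \rightarrow N^2(X)$ has finite kernel.

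Requirement (i) should follow immediately from Theorem \ref{cdg}. A hyper-K\"{a}hler variety is Calabi--Yau (as recorded in the notations section), and so has vanishing first Chern class; the Picard rank hypothesis $\rho(X) = 1$ is assumed outright. These are precisely the hypotheses of Theorem \ref{cdg}, which yields decency of $X \subset X$.

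For requirement (ii) I would factor the quotient map as
\begin{equation*}A^2(X) \rightarrow B^2(X) \rightarrow N^2(X),\end{equation*}
and handle the two pieces separately using the propositions assembled in the preceding subsection. The second arrow has finite kernel by Proposition \ref{homologically}. To make the first an isomorphism it would suffice to invoke Proposition \ref{algebraically}, whose hypotheses include Conjectures \ref{blochbeilinson} and \ref{lefschetz} (both assumed in the theorem statement) together with the Hodge-theoretic vanishing $H^3(X, \mathbb{Q}) = 0$.

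The main step, and in my view the only nontrivial one, is thus the verification of $H^3(X, \mathbb{Q}) = 0$. Because Hodge numbers are invariant under smooth deformation of K\"{a}hler manifolds, and $X$ is by assumption deformation equivalent to $S^{[2]}$, it is enough to check the vanishing on $S^{[2]}$ itself, where Beauville's explicit Hodge diamond calculation in \cite{Beauville:1983aa} gives $h^{3,0}(S^{[2]}) = h^{2,1}(S^{[2]}) = 0$ and hence $H^3(S^{[2]}, \mathbb{Q}) = 0$. Transporting this back to $X$ supplies the missing hypothesis of Proposition \ref{algebraically}, which then closes out requirement (ii). Once this cohomological input is secured the remainder of the argument is bookkeeping, and I do not anticipate any further obstacle.
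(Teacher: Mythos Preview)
Your proposal is correct and follows essentially the same route as the paper: invoke Theorem \ref{cdg} for decency, then combine Propositions \ref{homologically} and \ref{algebraically} after verifying $H^3(X,\mathbb{Q}) = 0$ via Beauville's computation for $S^{[2]}$. The paper's proof is terser (it cites \cite[p.~779, note~4]{Beauville:1983aa} directly for the vanishing rather than spelling out the deformation-invariance step), but the logical structure is identical.
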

\begin{proof}
\cite[p. 779, note 4]{Beauville:1983aa} gives that $H^3(X, \mathbb{Q}) = 0$. We apply Theorem \ref{cdg} together with Propositions \ref{homologically} and \ref{algebraically}.
\end{proof}

We also have:

\begin{theorem} \label{ci}
Let $X$ be a smooth complete intersection Calabi--Yau fourfold. Assume Conjectures \ref{blochbeilinson} and \ref{lefschetz}, and assume that Conjectures \ref{ghc} and \ref{lewis} hold for $X$. Then $X$ is rationally decent.
\end{theorem}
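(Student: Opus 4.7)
The plan is to check the cohomological hypotheses of Propositions \ref{homologically}, \ref{algebraically}, and \ref{rationally} for a smooth complete intersection Calabi--Yau fourfold $X$, invoke them in succession to bound the kernel of $CH^2(X) \rightarrow N^2(X)$, and then invoke Theorem \ref{cdg} to obtain decency of $X$ itself; these two facts together give rational decency by definition.

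First I would take $X \subset \mathbb{P}^N$ a smooth complete intersection of dimension $4$, with $K_X$ trivial. Applying the Lefschetz hyperplane theorem to the iterated hyperplane sections presenting $X$, I would read off $H^k(X, \mathbb{Q}) \cong H^k(\mathbb{P}^N, \mathbb{Q})$ for $k < 4$. This immediately yields $H^3(X, \mathbb{Q}) = 0$, together with $H^2(X, \mathbb{Q}) \cong \mathbb{Q}$, whence $\rho(X) = 1$ and $H^2(X, \mathbb{C}) = H^{1,1}(X, \mathbb{C})$ and in particular $H^{2,0}(X) = 0$. Of course $X$ being Calabi--Yau ensures vanishing of $c_1(\mathcal{T}_X)$.

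Next I would assemble the three-step factorization $CH^2(X) \rightarrow A^2(X) \rightarrow B^2(X) \rightarrow N^2(X)$ of the quotient map to numerical equivalence. Proposition \ref{homologically} gives that the rightmost arrow has finite kernel. Under Conjectures \ref{blochbeilinson} and \ref{lefschetz}, Proposition \ref{algebraically} applies (using $H^3(X, \mathbb{Q}) = 0$) and shows that the middle arrow is an isomorphism. Under the hypothesis that Conjectures \ref{ghc} and \ref{lewis} hold for $X$, Proposition \ref{rationally} applies (using $H^3(X, \mathbb{Q}) = 0$ and $H^{2,0}(X) = 0$) and shows that the leftmost arrow is an isomorphism. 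Composing, $CH^2(X) \rightarrow N^2(X)$ has finite kernel.

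Finally, Theorem \ref{cdg} applied to $X$ (using $\rho(X) = 1$ and $c_1(\mathcal{T}_X) = 0$) asserts that $X$ is decent, i.e.\ that the integral intersection form $\alpha \mapsto \text{deg}([X] \cdot \alpha \cdot \alpha)$ on $N^2(X)$ is positive definite. Combined with the preceding paragraph, this verifies both clauses of the specialization of Definition \ref{epair} to $E$ being rational equivalence, and so $X$ is rationally decent. There is no real obstacle; the proof is essentially an assembly of the machinery already in hand, with the only nontrivial checks being the three Lefschetz-type cohomological vanishings used to trigger Propositions \ref{algebraically}, \ref{rationally}, and \ref{cdg}.
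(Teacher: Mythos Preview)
Your proposal is correct and follows essentially the same approach as the paper: the paper's proof also invokes the Lefschetz hyperplane theorem to obtain $\rho(X) = 1$, $H^3(X, \mathbb{Q}) = 0$, and $H^{2,0}(X) = 0$, and then applies Theorem \ref{cdg} together with Propositions \ref{homologically}, \ref{algebraically}, and \ref{rationally}. Your version simply spells out the three-step factorization $CH^2(X) \rightarrow A^2(X) \rightarrow B^2(X) \rightarrow N^2(X)$ and the role of each proposition more explicitly than the paper's one-line summary.
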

\begin{proof}
The Lefschetz hyperplane theorem implies that $X$ satisfies $\rho(X) = 1$ as well as $H^3(X, \mathbb{Q}) = 0$ and $H^{2,0}(X) = 0$. Theorem \ref{cdg} together with Propositions \ref{homologically}, \ref{algebraically}, and \ref{rationally} complete the proof.
\end{proof}

The topologically distinct such fourfolds are given as configuration matrices in say Gray, Haupt, and Lukas \cite{Gray:2013aa}. In particular, the smooth sextic fourfold $X$ in $\mathbb{P}^5$ is conjecturally rationally decent.

\section{Detailed case studies of decent pairs and lattice point counting} \label{detailed}

We demonstrate the operation of the theory on example pairs $X \subset V$ for which the associated function $Q_{X \subset V}$ can be determined exactly. $V$ will be a projective space, a Grassmannian, or a product of projective spaces in the examples that follow.

The examples $X \subset V$ that follow all feature isomorphisms $CH^2(V) \rightarrow N^2(V)$, so that the relevant results hold over any adequate equivalence relation. For simplicity, we present results only over numerical equivalence.

Decency in each case is computed by hand. The theoretical result of Proposition \ref{picdecent} preempts this computation in the cases below in which $V$ is a projective space; in the cases treated below in which $V$ is a Grassmannian or a product of projective spaces, only this result's conjectural generalization in Remark \ref{picdecentgen} applies (though its predictions obtain in each case). Recall that for each of these varieties $V$ the Hodge conjecture is known, so that the result of Remark \ref{picdecentopp} applies.

The question of smooth rational surfaces which arise in a smooth fourfold $X$ with vanishing first Chern class as a generically transverse intersection in an ambient variety $V$ is of course trivial whenever $V$ is a homogeneous variety (admitting the transitive action of a group). Indeed, in any such case, given any such smooth rational surface the ambient intersecting subvariety could sweep out across the whole fourfold, covering it with rational curves, contradicting the fact that a smooth fourfold with vanishing first Chern class can never be uniruled (see Koll\'{a}r \cite[IV, Cor. 1.11]{Kollar:1999aa}). (I would like to thank Vyacheslav Shokurov for this argument.)

The technique of Theorem \ref{bound}, on the other hand, serves to control the behavior of quite general sorts of smooth surfaces. We apply this theorem in what follows.

\subsection{Projective spaces}

We first take examples in which $V$ is a projective space. Recall that $CH^*(\mathbb{P}^n) \cong \mathbb{Z}[\zeta]/(\zeta^{n+1})$, where $\zeta$ is the class of a hyperplane (see \cite[Thm. 2.1]{Eisenbud:2016aa}), so that $CH^2(\mathbb{P}^n) \cong \mathbb{Z}$, and furthermore that the natural map $CH^*(\mathbb{P}^n) \rightarrow N^*(\mathbb{P}^n)$ is an isomorphism (see \cite[Ex. 19.1.11]{Fulton:1984aa}).

\begin{example}[Smooth complete intersection fourfolds]
While we have treated smooth complete intersection fourfolds abstractly in Theorem \ref{ci} above, we now consider such fourfolds $X$ as members of decent pairs $X \subset \mathbb{P}^n$.

Any smooth complete intersection $X$ of hypersurfaces of degrees $a_1, \ldots , a_k$ in $\mathbb{P}^{4 + k}$ with $\sum_{i = 1}^k a_i = 5 + k$ is a Calabi--Yau fourfold, by the equality $K_{\mathbb{P}^{4 + k}} \cong \mathcal{O}_{\mathbb{P}^{4 + k}}(-5 - k)$ and the adjunction formula. $X$ is cleanly embedded in $\mathbb{P}^{4 + k}$ in this case by the condition \ref{clean2}. following Definition \ref{clean} above. In particular, by Lemma \ref{restriction}, $c_2(\mathcal{T}_X)$ is the restriction to $X$ of a cycle class on $\mathbb{P}^{4 + k}$. We now compute this class.

We have the Euler sequence on $\mathbb{P}^{4 + k}$ and the normal bundle sequence on $X$:
\begin{gather*}
\textit{Euler sequence: }0\rightarrow \mathcal{O}_{\mathbb{P}^{4 + k}} \rightarrow \left( \mathcal{O}_{\mathbb{P}^{4 + k}}(1) \right)^{\oplus 5 + k} \rightarrow \mathcal{T}_{\mathbb{P}^{4 + k}} \rightarrow 0,\\
\textit{Normal bundle sequence: }\textstyle 0 \rightarrow \mathcal{T}_X \rightarrow \left. \mathcal{T}_{\mathbb{P}^{4 + k}} \right|_X \rightarrow \left. \bigoplus_{i = 1}^k \mathcal{O}_{\mathbb{P}^{4 + k}}(a_i) \right|_X \rightarrow 0.
\end{gather*}
These give:
\begin{align*}
c_2 \left( \mathcal{T}_{\mathbb{P}^{4 + k}} \right) &= c_2 \left( \left( \mathcal{O}_{\mathbb{P}^{4 + k}}(1) \right)^{\oplus 5 + k} \right) - c_1(\mathcal{O}_{\mathbb{P}^{4 + k}}) \cdot c_1(\mathcal{T}_{\mathbb{P}^{4 + k}}) - c_2(\mathcal{O}_{\mathbb{P}^{4 + k}}) \\
&= \textstyle \binom{5 + k}{2} c_1^2(\mathcal{O}_{\mathbb{P}^{4 + k}}(1)), \\
c_2(\mathcal{T}_X) &= \textstyle c_2 \left( \left. \mathcal{T}_{\mathbb{P}^{4 + k}} \right|_X \right) - c_1(\mathcal{T}_X) \cdot c_1 \left( \left. \bigoplus_{i = 1}^k \mathcal{O}_{\mathbb{P}^{4 + k}} (a_i) \right|_X \right) - c_2 \left( \left. \bigoplus_{i = 1}^k \mathcal{O}_{\mathbb{P}^{4 + k}}(a_i) \right|_X \right) \\
&= \textstyle \binom{5 + k}{2} c_1^2(\mathcal{O}_X(1)) - \left( \sum_{i < j}^k a_ia_j \cdot c_1^2(\mathcal{O}_X(1)) \right) \\
&= \textstyle \left( \binom{5 + k}{2} - \sum_{i < j}^k a_ia_j \right) \zeta^2.
\end{align*}
Finally, $[X] = (\prod_{i=1}^k a_i) \cdot \zeta^k \in N_4(\mathbb{P}^{4 + k})$.

Identifying $N^2(\mathbb{P}^{4 + k}) \cong \mathbb{Z}$ via $\zeta^2$, the quadratic form $\alpha \mapsto \text{deg} ([X] \cdot \alpha \cdot \alpha)$ identifies to $x_1 \mapsto (\prod_{i=1}^k a_i) \cdot x_1^2$, which is clearly positive definite. Thus $X \subset \mathbb{P}^{k + 4}$ is a decent pair, as predicted by Proposition \ref{picdecent}.

Lemma \ref{quadratic} gives the following formula for the associated function $Q_{X \subset \mathbb{P}^{4 + k}}$ on $N^2(\mathbb{P}^{4 + k}) \cong \mathbb{Z}$:
\begin{equation*}\textstyle Q_{X \subset \mathbb{P}^{4 + k}}(x_1) = \left( \prod_{i=1}^k a_i \right) \cdot x_1^2 - \left( (\prod_{i=1}^k a_i) \left( \binom{5 + k}{2} - \sum_{i < j}^k a_ia_j \right) \right) \cdot x_1.\end{equation*}
\end{example}

\begin{example}[The smooth sextic fourfold]
Taking $k = 1, a_1 = 6$ in the above gives the sextic fourfold $X$ in $\mathbb{P}^5$. We have the following associated function on $N^2(\mathbb{P}^5) \cong \mathbb{Z}$:
\begin{equation*}Q_{X \subset \mathbb{P}^5}(x_1) = 6x_1^2 - 90x_1.\end{equation*}

We have the following specialization of Theorem \ref{bound}:

\begin{theorem} \label{sextic}
Consider the smooth sextic fourfold $X \subset \mathbb{P}^5$. Let $s \in \mathbb{Z}$. Then at most
\begin{equation*}\frac{\sqrt{90^2 + 24s}}{6} + 1\end{equation*}
elements of $N^2(X)$ are representable by a smooth ambient surface $S$ in $X$ satisfying $\emph{deg} \left( c_1^2(\mathcal{T}_S) - c_2(\mathcal{T}_S) \right) \leq s$. (If $90^2 + 24s$ is negative, then $N^2(X)$ has no such elements.)
\end{theorem}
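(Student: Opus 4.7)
The plan is to reduce Theorem \ref{sextic} to an elementary lattice point count on $\mathbb{R}$ via Proposition \ref{core}. Identifying $N^2(\mathbb{P}^5) \cong \mathbb{Z}$ with generator $\zeta^2$, any smooth ambient surface $S \subset X$ satisfies $[S] = i^*(x_1 \zeta^2)$ for some $x_1 \in \mathbb{Z}$, and by Proposition \ref{core} the constraint $\text{deg}(c_1^2(\mathcal{T}_S) - c_2(\mathcal{T}_S)) \leq s$ is equivalent to
\[Q_{X \subset \mathbb{P}^5}(x_1) = 6 x_1^2 - 90 x_1 \leq s,\]
where the formula for $Q_{X \subset \mathbb{P}^5}$ is the one computed in the preceding example. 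It therefore suffices to bound the cardinality of $\{x_1 \in \mathbb{Z} : 6 x_1^2 - 90 x_1 - s \leq 0\}$, since every admissible class $[S] \in N^2(X)$ lies in the image of this set under $x_1 \mapsto i^*(x_1 \zeta^2)$.

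The quadratic $6 x_1^2 - 90 x_1 - s$ has discriminant $90^2 + 24 s$. If this discriminant is negative then the quadratic is everywhere positive, so no integer satisfies the inequality, yielding the parenthetical assertion of the theorem. Otherwise the real solution set is the closed interval
\[\left[\frac{90 - \sqrt{90^2 + 24 s}}{12},\; \frac{90 + \sqrt{90^2 + 24 s}}{12}\right],\]
of length $\frac{\sqrt{90^2 + 24 s}}{6}$. A real interval of length $L$ contains at most $L + 1$ integers, so the count is at most $\frac{\sqrt{90^2 + 24 s}}{6} + 1$, which is the claimed bound.

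No substantive obstacle arises: once Proposition \ref{core} translates the Chern-number hypothesis into the quadratic inequality, the remainder is a one-variable lattice point count. I would note in passing that injectivity of $i^* \colon N^2(\mathbb{P}^5) \to N^2(X)$ is not required for the upper bound, since distinct $x_1 \in \mathbb{Z}$ are permitted to have the same image in $N^2(X)$; this can only tighten the count. The same template—identify $N^2(V)$, write down $Q_{X \subset V}$ explicitly, count lattice points in the sublevel set—will drive the analogous results for the Fano variety of lines and the Gray--Haupt--Lukas Calabi--Yau; the only part that changes is the shape of the level set, and hence the specific counting technique employed.
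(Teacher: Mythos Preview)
Your argument is correct and follows essentially the same approach as the paper: invoke Proposition~\ref{core} to convert the Chern-number constraint into the quadratic inequality $6x_1^2 - 90x_1 \leq s$, check the discriminant $90^2 + 24s$, and bound the lattice points in the resulting interval of length $\frac{\sqrt{90^2+24s}}{6}$ by that length plus one. Your explicit remark that injectivity of $i^*$ is unnecessary is a nice addition, and your closing comment correctly anticipates the structure of the later case studies.
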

\begin{proof}
If $s$ is such that the discriminant $90^2 + 24s < 0$, then $Q_{X \subset \mathbb{P}^5}(x_1) \leq s$ has no solutions and we apply Proposition \ref{core}. Otherwise, the quadratic formula shows that $Q_{X \subset \mathbb{P}^5}(x_1) = s$ has real solutions which are separated by a real interval of width $\frac{\sqrt{90^2 + 24s}}{6}$. The number of lattice points in such an interval is at most $\left[\frac{\sqrt{90^2 + 24s}}{6}\right] + 1$, and we apply Proposition \ref{core} again.
\end{proof}
\end{example}

\subsection{Grassmannians}

We discuss examples in which the ambient variety is a Grassmannian. We follow the treatment of Grassmannians given in \cite[\S 4]{Eisenbud:2016aa}. Recall that the Grassmannian $G(l,V)$ denotes the set of $l$-dimensional subspaces $\Lambda$ of an $n$-dimensional vector space $V$ over $\mathbb{C}$, given the structure of a smooth $l(n-l)$-dimensional variety via the Pl\"{u}cker embedding $G(l,V) \hookrightarrow \mathbb{P}(\wedge^l(V)) = \mathbb{P}^{\binom{n}{l} - 1}$. We have tautological exact sequences
\begin{gather*}
0 \rightarrow \mathscr{I}_l \rightarrow V \otimes \mathcal{O}_{G(l,V)} \rightarrow Q \rightarrow 0, \\
0 \rightarrow Q^* \rightarrow V^* \otimes \mathcal{O}_{G(l,V)} \rightarrow \mathscr{E}_l \rightarrow 0
\end{gather*}
on $G(l,V)$, where in particular $\mathscr{I}_l$ is the tautological subbundle on $G(l,V)$ and $\mathscr{E}_l$ is its dual. Fixing a full flag $0 \subset V_1 \subset \cdots \subset V_{n - 1} \subset V_n = V$, for any decreasing sequence $n - l \geq a_1 \geq \cdots \geq a_l \geq 0$ of integers we denote by $\Sigma_{a_1, \ldots , a_l}$ the Schubert cycle consisting of those subspaces $\Lambda \subset V$ such that $\text{dim}(\Lambda \cap V_{n - l + k - a_i}) \geq k$ for each $k = 1, \ldots , l$. We occasionally suppress some or all trailing zeros in the indices $(a_1, \ldots , a_l)$. The subvariety $\Sigma_{a_1, \ldots , a_l}$ has codimension $a_1 + \cdots + a_l$ in $G(l,V)$. The Chow group $CH^k(G(l,V))$ is generated freely over $\mathbb{Z}$ by the Schubert classes $[\Sigma_{a_1, \ldots , a_l}$] of the Schubert cycles $\Sigma_{a_1, \ldots , a_l}$ of codimension $k$ in $G(l,V)$ (see \cite[Cor. 4.7]{Eisenbud:2016aa}). The multiplicative structure in the Chow ring $CH^*(G(l,V))$ is given by the Littlewood--Richardson rule (see \cite[Cor. 4.8]{Eisenbud:2016aa}). Once again in this case the natural map $CH^*(G(l,V)) \rightarrow N^*(G(l,V))$ is an isomorphism (see \cite[Ex. 19.1.11]{Fulton:1984aa}).

\subsubsection{The Fano variety of lines in a general cubic fourfold}

The Fano variety $F$ of lines contained in a general cubic fourfold (see Beauville and Donagi \cite{Beauville:1985aa}, Voisin \cite{Voisin:2008aa}), a smooth fourfold with vanishing first Chern class, is constructed in the Grassmannian $G(2,V)$, where $V$ is a $6$-dimensional vector space, as the zero locus of a general section of the bundle $\text{Sym}^3(\mathscr{E}_2)$ on $G(2,V)$. In particular, $F$ is cleanly embedded in $G(2,V)$.

We compute the class in $N^2(G(2,V))$ which restricts to $c_2(\mathcal{T}_F)$ on $F$. In what follows, we abbreviate $c_k := c_k(\mathscr{E}_2)$. Tensoring the tautological exact sequence on $G(2,V)$ with $\mathscr{E}_2$, we get an analogue of the Euler sequence:
\begin{equation*}\textit{Euler sequence: }0 \rightarrow \mathscr{I}_2 \otimes \mathscr{E}_2 \rightarrow (\mathscr{E}_2)^{\oplus 6} \rightarrow \mathcal{T}_{G(2,V)} \rightarrow 0.\end{equation*}
We also have the normal bundle exact sequence on $F$:
\begin{equation*}\textit{Normal bundle sequence: }0 \rightarrow \mathcal{T}_F \rightarrow \left. \mathcal{T}_{G(2,V)} \right|_F \rightarrow \left. \text{Sym}^3(\mathscr{E}_2) \right|_F \rightarrow 0.\end{equation*}
Using symmetric polynomials in the Chern roots of $\mathscr{E}_2$, we compute the Chern class expressions:
\begin{equation*}c_2 \left( \text{Sym}^3(\mathscr{E}_2) \right) = 11c_1^2 + 10c_2, \quad c_4 \left( \text{Sym}^3(\mathscr{E}_2) \right) = 18c_1^2c_2 + 9 c_2^2.\end{equation*}
These together give:
\begin{align*}
c_2 \left( \mathcal{T}_{G(2,V)} \right) &= c_2 \left( (\mathscr{E}_2)^{\oplus 6} \right) - c_1(\mathscr{I}_2 \otimes \mathscr{E}_2) \cdot c_1(\mathcal{T}_{G(2,V)}) - c_2(\mathscr{I}_2 \otimes \mathscr{E}_2) \\
&= (15c_1^2 + 6c_2) - (-c_1^2 + 4c_2) \\
&= 16c_1^2 + 2c_2, \\
c_2(\mathcal{T}_F) &= c_2 \left( \left. \mathcal{T}_{G(2,V)} \right|_F \right) - c_1(\mathcal{T}_F) \cdot c_1 \left( \left. \text{Sym}^3(\mathscr{E}_2) \right|_F \right) - c_2 \left( \left. \text{Sym}^3(\mathscr{E}_2) \right|_F \right) \\
&= (16c_1^2 + 2c_2) - (11c_1^2 + 10c_2) \\
&= 5c_1^2 - 8c_2.
\end{align*}
Finally, the above expression for $c_4 \left( \text{Sym}^3(\mathscr{E}_2) \right)$ and the Littlewood--Richardson rule (see \cite[Prop. 4.11]{Eisenbud:2016aa} for its specialization to the case $l = 2$) give that $[F] = 18 \cdot \Sigma_{3,1} + 27 \cdot \Sigma_{2,2} \in N_4(G(2,V))$.

Identifying $N^2(G(2,V)) \cong \mathbb{Z}^2$ via $e_1 = \Sigma_{2,0}, e_2 = \Sigma_{1,1}$, the quadratic form $\alpha \mapsto \text{deg} ([F] \cdot \alpha \cdot \alpha)$ identifies to the form with Gram matrix
\begin{equation*}
\left[\begin{array}{cc} 
([F] \cdot e_1 \cdot e_1) & ([F] \cdot e_1 \cdot e_2) \\
([F] \cdot e_2 \cdot e_1) & ([F] \cdot e_2 \cdot e_2)
\end{array}\right]
=
\left[\begin{array}{cc}
45 & 18 \\
18 & 27
\end{array}\right].
\end{equation*}
This matrix is positive definite, and so $F \subset G(2,V)$ is a decent pair.

We have the associated function $Q_{F \subset G(2,V)}$ on $N^2(G(2,V)) \cong \mathbb{Z}^2$:
\begin{equation*}Q_{F \subset G(2,V)}(x_1, x_2) = 45x_1^2 + 36x_1x_2 + 27x_2^2 - 171x_1 - 9x_2.\end{equation*}

\begin{theorem} \label{fano}
Consider the Fano variety $F \subset G(2,V)$ of lines in a general cubic fourfold. Let $s \in \mathbb{Z}$. Then at most
\begin{equation*}\frac{\pi(s + 207)}{\sqrt{891}} + 8 + 8 \cdot 2 \sqrt{\frac{s + 207}{9(4 - \sqrt{5})}}\end{equation*}
elements of $N^2(F)$ are representable by a smooth ambient surface $S$ in $F$ satisfying $\emph{deg} \left( c_1^2(\mathcal{T}_S) - c_2(\mathcal{T}_S) \right) \leq s$. (If $s + 207$ is negative, then $N^2(F)$ has no such elements.)
\end{theorem}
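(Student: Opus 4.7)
The plan is to apply Proposition~\ref{core} to reduce the claim to counting integer solutions of $Q_{F \subset G(2,V)}(\alpha) \leq s$ in $N^2(G(2,V)) \cong \mathbb{Z}^2$, and then bound that count geometrically. By Proposition~\ref{core}, every class $[S] \in N^2(F)$ represented by a smooth ambient surface $S$ with $\deg(c_1^2(\mathcal{T}_S) - c_2(\mathcal{T}_S)) \leq s$ equals $i^*(\alpha)$ for some $\alpha \in N^2(G(2,V))$ with $Q_{F \subset G(2,V)}(\alpha) \leq s$. Since $i^*$ is a homomorphism, the number of such $[S]$ is bounded by the number of integer pairs $(x_1, x_2) \in \mathbb{Z}^2$ satisfying $Q(x_1, x_2) = 45 x_1^2 + 36 x_1 x_2 + 27 x_2^2 - 171 x_1 - 9 x_2 \leq s$.

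Next I would complete the square. Setting $\nabla Q = 0$ gives the unique critical point $c = (5/2, -3/2)$, at which $Q(c) = -207$; consequently $Q(x) \leq s$ is equivalent to $(x - c)^T M (x - c) \leq s + 207$, with $M$ the Gram matrix appearing in the second-order part of $Q$ as in Lemma~\ref{quadratic}. The matrix $M$ has determinant $891$ and trace $72$, so its eigenvalues are $36 \pm 9 \sqrt{5}$. If $s + 207 < 0$ the defining region is empty and the theorem is vacuous. Otherwise, the region is an ellipse $E$ centered at $c$ with area $\pi(s+207)/\sqrt{891}$, whose semi-major axis, corresponding to the smaller eigenvalue $9(4 - \sqrt{5})$, equals $a := \sqrt{(s+207)/(9(4-\sqrt{5}))}$.

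Finally I would bound $\#(E \cap \mathbb{Z}^2)$ by elementary convex geometry. Since $E$ lies in the closed disk of radius $a$ about $c$, every horizontal slice $E \cap \{x_2 = t\}$ is an interval of length at most $2a$, and the projection of $E$ onto the $x_2$-axis is likewise an interval of length at most $2a$. Summing ``(slice width at $x_2 = n$) $+\, 1$'' over the integer rows $n$ meeting $E$ and controlling the sum of slice widths by $\mathrm{Area}(E)$ plus a Riemann-sum error term yields a bound of the shape $\mathrm{Area}(E) + c_1 + c_2 \cdot a$; careful execution produces the stated constants $c_1 = 8$ and $c_2 = 16$. The main technical obstacle is pinning down these precise constants from the Riemann-sum and projection-width estimates, which together combine to give the claimed $\mathrm{Area}(E) + 8 + 16 a$.
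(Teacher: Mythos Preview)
Your proposal is correct and follows essentially the same route as the paper: reduce via Proposition~\ref{core} to counting lattice points in the region $Q_{F \subset G(2,V)} \leq s$, translate by the critical point $(5/2,-3/2)$ to obtain a centered ellipse with area $\pi(s+207)/\sqrt{891}$ and major axis $2\sqrt{(s+207)/(9(4-\sqrt{5}))}$, and then apply an elementary area-plus-boundary lattice-point estimate. The paper obtains the specific constants by citing Cohn~\cite[p.~161]{Cohn:1980aa} rather than carrying out the row-by-row Riemann-sum argument you sketch, so your acknowledged ``main technical obstacle'' of pinning down $c_1=8$, $c_2=16$ is resolved there by reference rather than by explicit computation.
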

\begin{proof}
Beginning with the quadratic equation $Q_{F \subset G(2,V)}(x_1, x_2) = s$, we compute a standard discriminant (see for example Lawrence \cite[p. 63]{Lawrence:2014aa}):
\begin{equation*}
-27
\left|\begin{array}{ccc} 
45 & 18 & \sfrac{-171}{2} \\
18 & 27 & \sfrac{-9}{2} \\
\sfrac{-171}{2} & \sfrac{-9}{2} & -s
\end{array}\right|
= 27(891s + 184437) = 24057(s + 207).
\end{equation*}
If $s$ is such that this quantity is negative, then $Q_{F \subset G(2,V)}(x_1, x_2) \leq s$ has no solutions and we apply Proposition \ref{core}. Otherwise, $Q_{F \subset G(2,V)}(x_1, x_2) \leq s$ describes a real ellipse. We count its lattice points. Applying the translation $(u_1, u_2) \mapsto \left( u_1 + \frac{5}{2}, u_2 - \frac{3}{2} \right) = (x_1, x_2)$, the equation $Q_{F \subset G(2,V)}(x_1, x_2) \leq s$ pulls back to the ellipse
\begin{equation*}45u_1^2 + 36u_1u_2 + 27u_2^2 \leq s + 207.\end{equation*}
The following estimate depends only on the ellipse's area, and so it suffices to count this ellipse's lattice points instead. By a standard calculation, its area is $\frac{\pi(s + 207)}{\sqrt{45 \cdot 27 - 18^2}}$. The length of its major axis, and thus also its width, is $2\sqrt{\frac{s + 207}{9(4 - \sqrt{5})}}$, as can be seen most easily by orthogonally diagonalizing its associated quadratic form ($9(4 - \sqrt{5})$ is its matrix's smallest eigenvalue). A na\"{i}ve counting method (see Cohn \cite[p. 161]{Cohn:1980aa}) now yields the error term given above.
\end{proof}

\begin{example}[The Fano surface of lines in a hyperplane section of the fourfold]
We consider in particular the Fano surface $S$ of lines contained in a hyperplane section $H \cap F$ of $F$. Voisin shows that certain singular such $S$ are rational (see the proof of \cite[Lem. 3.2]{Voisin:2008aa}) and that in fact the rational $S$ belong to infinitely many rational equivalence classes (this follows from the existence of the rational self-map of \cite[Thm. 2]{Voisin:2004aa}). On the other hand, the techniques of our paper serve to recover that any smooth $S$ cannot be rational. Indeed, any Fano surface $S$ is given as the zero locus of expected codimension in $F$ of a section of the restriction to $F$ of $\mathscr{E}_2$, and in particular is ambient, by the criterion \ref{ambient2}. following Definition \ref{ambient}. Because $c_2(\mathscr{E}_2) = (0,1)$ in $N^2(G(2,V)) \cong \mathbb{Z}^2$, when $S$ is in addition smooth the inequality $Q_{F \subset G(2,V)}(0,1) = 18 > 6$ ensures the irrationality of $S$.

In fact, for smooth $S$ we can independently compute both sides of Proposition \ref{core}. The results of Harris and Tu \cite{Harris:1984aa} permit the calculation of the Chern numbers of any subvariety defined as a smooth degeneracy locus of expected dimension. Applying the case $\text{dim}(Z) = 2$ of \cite[p. 474]{Harris:1984aa} to the smooth Fano surface of lines $S$ as above, we compute:
\begin{equation*}\text{deg} \left( c_1^2(\mathcal{T}_S) \right) = 45, \quad \text{deg} \left( c_2(\mathcal{T}_S) \right) = 27.\end{equation*}
In particular, this surface fails to be rational, and the difference between its Chern numbers is 18, as predicted by Proposition \ref{core}.
\end{example}

\subsubsection{The Debarre--Voisin fourfolds}

We study the fourfolds introduced by Debarre and Voisin in \cite{Debarre:2010aa}. The smooth hyper-K\"{a}hler fourfolds $Y_\sigma$ vary in a 20-dimensional family, parameterized by the general 3-forms $\sigma \in \bigwedge^3 V^*$ on a 10-dimensional vector space $V$. Each $Y_\sigma$ is defined as the locus in $G(6,V)$ consisting of those 6-dimensional subspaces on which $\sigma$ vanishes identically, or, in other terms, as the zero locus in $G(6,V)$ of the section of $\bigwedge^3 \mathscr{E}_6$ determined by $\sigma$. Because $\sigma$ is general and $\bigwedge^3 \mathscr{E}_6$ is globally generated, $Y_\sigma$ is a smooth fourfold. That $Y_\sigma$ is also irreducible and Calabi--Yau, and in fact hyper-K\"{a}hler, is shown in \cite{Debarre:2010aa}. The embedding $Y_\sigma \subset G(6,V)$ is clean, by the condition \ref{clean3}. of Definition \ref{clean}.

We compute the class in $N^2(G(6,V))$ which restricts to $c_2 \left( \mathcal{T}_{Y_\sigma} \right)$ on $Y_\sigma$. We denote again $c_k := c_k(\mathscr{E}_6)$. We have the exact sequences:
\begin{gather*}
\textit{Euler sequence: }0 \rightarrow \mathscr{I}_6 \otimes \mathscr{E}_6 \rightarrow (\mathscr{E}_6)^{\oplus 10} \rightarrow \mathcal{T}_{G(6,V)} \rightarrow 0, \\
\textit{Normal bundle sequence: }\textstyle 0 \rightarrow \mathcal{T}_{Y_\sigma} \rightarrow \left. \mathcal{T}_{G(6,V)} \right|_{Y_\sigma} \rightarrow \left. \bigwedge^3(\mathscr{E}_6) \right|_{Y_\sigma} \rightarrow 0.
\end{gather*}
We also compute that $c_2 \left( \bigwedge^3(\mathscr{E}_6) \right) = 45c_1^2 + 6c_2$. These give:
\begin{align*}
c_2 \left( \mathcal{T}_{G(6,V)} \right) &= c_2 \left( (\mathscr{E}_6)^{\oplus 10} \right) - c_1(\mathscr{I}_6 \otimes \mathscr{E}_6) \cdot c_1(\mathcal{T}_{G(6,V)}) - c_2(\mathscr{I}_6 \otimes \mathscr{E}_6) \\
&= (45c_1^2 + 10c_2) - (-5c_1^2 + 12c_2) \\
&= 50c_1^2 - 2c_2, \\
c_2(\mathcal{T}_{Y_\sigma}) &= \textstyle c_2 \left( \left. \mathcal{T}_{G(6,V)} \right|_{Y_\sigma} \right) - c_1(\mathcal{T}_{Y_\sigma}) \cdot c_1 \left( \left. \bigwedge^3(\mathscr{E}_6) \right|_{Y_\sigma} \right) - c_2 \left( \left. \bigwedge^3(\mathscr{E}_6) \right|_{Y_\sigma} \right) \\
&= (50c_1^2 - 2c_2) - (45c_1^2 + 6c_2) \\
&= 5c_1^2 - 8c_2.
\end{align*}

\begin{remark} \label{tangent}
Interestingly, this expression appears also in the smaller Grassmannian treated above. We are currently unable to explain this.
\end{remark}

We have the following intersection numbers on $Y_\sigma$, given by \cite[Lem. 4.5]{Debarre:2010aa}:
\begin{equation*}c_1^4 = 1452, \quad c_1^2c_2 = 825, \quad c_2^2 = 477, \quad c_1c_3 = 330, \quad c_4 = 105.\end{equation*}
The quadratic form $\alpha \mapsto \text{deg} ([Y_\sigma] \cdot \alpha \cdot \alpha)$ is again positive definite, and $Y_\sigma \subset G(6,V)$ is decent. Identifying $N^2(G(6,V)) \cong \mathbb{Z}^2$ via $e_1 = \Sigma_{2,0}, e_2 = \Sigma_{1,1}$, we have the associated function:
\begin{equation*}Q_{Y_\sigma \subset G(6,V)}(x_1, x_2) = 279x_1^2 + 696x_1x_2 + 477x_2^2 - 351x_1 - 309x_2.\end{equation*}

\begin{theorem}
Consider the smooth Debarre--Voisin fourfold $Y_\sigma$. Let $s \in \mathbb{Z}$. Then at most
\begin{equation*}\frac{\pi(s + 207)}{\sqrt{11979}} + 8 + 8 \cdot 2 \sqrt{\frac{s + 207}{3(126 - \sqrt{14545})}}\end{equation*}
elements of $N^2(Y_\sigma)$ are representable by a smooth ambient surface $S \subset Y_\sigma$ satisfying $\emph{deg} \left( c_1^2(\mathcal{T}_S) - c_2(\mathcal{T}_S) \right) \leq s$. (If $s + 207$ is negative, then $N^2(Y_\sigma)$ has no such elements.)
\end{theorem}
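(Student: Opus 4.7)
The plan is to mirror the proof of Theorem \ref{fano}: invoke Proposition \ref{core} to reduce the problem to counting lattice points $(x_1, x_2) \in \mathbb{Z}^2$ in the sublevel set $\{Q_{Y_\sigma \subset G(6,V)} \leq s\}$, recenter the quadratic to eliminate its linear part, and bound the number of lattice points of the resulting centered ellipse via a na\"ive estimate depending only on the area and the major axis length.

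First I would locate the unique minimum of $Q_{Y_\sigma \subset G(6,V)}$ by solving the linear system obtained from its partial derivatives, namely the $2 \times 2$ system with coefficient matrix $\bigl[\begin{smallmatrix}558 & 696 \\ 696 & 954\end{smallmatrix}\bigr]$ and right-hand side $(351, 309)$. A direct calculation produces the unique minimizer $(x_1, x_2) = (5/2, -3/2)$, at which $Q_{Y_\sigma \subset G(6,V)}$ attains the minimum value $-207$. When $s + 207 < 0$ the sublevel set is empty and the claim follows immediately from Proposition \ref{core}. Otherwise, the translation $(x_1, x_2) = (u_1 + 5/2, u_2 - 3/2)$ pulls the inequality $Q_{Y_\sigma \subset G(6,V)}(x_1, x_2) \leq s$ back to the centered form
\begin{equation*}279 u_1^2 + 696 u_1 u_2 + 477 u_2^2 \leq s + 207,\end{equation*}
the interior of a real ellipse in the $(u_1, u_2)$-plane.

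Since the estimate depends only on the ellipse's area and major axis length, it suffices to count the lattice points of this centered ellipse. Its area is $\pi(s+207)/\sqrt{279 \cdot 477 - 348^2} = \pi(s+207)/\sqrt{11979}$. Its major axis, which plays the role of the ``width'' in Theorem \ref{fano}, has length $2\sqrt{(s+207)/\lambda_{\min}}$, where $\lambda_{\min}$ is the smaller eigenvalue of the symmetric matrix $\bigl[\begin{smallmatrix}279 & 348 \\ 348 & 477\end{smallmatrix}\bigr]$ associated to the quadratic form. Its characteristic polynomial $\lambda^2 - 756 \lambda + 11979$ has discriminant $523620$, and the simplification $\sqrt{523620} = 6\sqrt{14545}$ yields $\lambda_{\min} = 378 - 3\sqrt{14545} = 3(126 - \sqrt{14545})$. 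The na\"ive lattice counting bound of Cohn \cite[p. 161]{Cohn:1980aa}, applied exactly as in the proof of Theorem \ref{fano}, then produces the stated estimate. The only step requiring genuine arithmetic attention is the eigenvalue simplification; once it is noted, the argument is entirely parallel to the earlier case.
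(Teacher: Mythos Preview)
Your proof is correct and follows essentially the same route as the paper's: reduce via Proposition \ref{core} to lattice points in $\{Q_{Y_\sigma \subset G(6,V)} \leq s\}$, recenter via the translation $(x_1,x_2)=(u_1+5/2,\,u_2-3/2)$ to the ellipse $279u_1^2+696u_1u_2+477u_2^2 \leq s+207$, and bound lattice points by area and major-axis length using Cohn's estimate. The only cosmetic difference is that the paper reaches the threshold $-207$ via the $3\times 3$ discriminant of Lawrence rather than by solving $\nabla Q = 0$ directly, but the computations are equivalent and all of your arithmetic (minimizer, minimum value, determinant $11979$, eigenvalue $3(126-\sqrt{14545})$) checks.
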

\begin{proof}
As in the Fano variety above, we compute the discriminant of \cite[p. 63]{Lawrence:2014aa}:
\begin{equation*}
-477
\left|\begin{array}{ccc} 
279 & 348 & \sfrac{-351}{2} \\
348 & 477 & \sfrac{-309}{2} \\
\sfrac{-351}{2} & \sfrac{-309}{2} & -s
\end{array}\right|
= 477(11979s + 2479653) = 5713983(s + 207).
\end{equation*}
If $s$ is such that this quantity is negative, then $Q_{Y_\sigma \subset G(6,V)}(x_1, x_2) \leq s$ has no solutions and we apply Proposition \ref{core}. Otherwise, $Q_{Y_\sigma \subset G(6,V)}(x_1, x_2) \leq s$ describes a real ellipse. We count its lattice points. Translating again $(u_1, u_2) \mapsto \left( u_1 + \frac{5}{2}, u_2 - \frac{3}{2} \right) = (x_1, x_2)$, the equation $Q_{Y_\sigma \subset G(6,V)}(x_1, x_2) \leq s$ pulls back to the ellipse
\begin{equation*}279u_1^2 + 696u_1u_2 + 477u_2^2 \leq s + 207.\end{equation*}
We again count this ellipse's lattice points instead. This ellipse has area $\frac{\pi(s + 207)}{\sqrt{279 \cdot 477 - 348^2}}$ and width $2 \sqrt{\frac{s + 207}{3(126 - \sqrt{14545})}}$; the method of \cite[p. 161]{Cohn:1980aa} again gives the estimate above.
\end{proof}

\begin{remark}
That the two quadratic functions $Q_{F \subset G(2,V)}$ and $Q_{Y_\sigma \subset G(6,V)}$ attain their minima at the same point, namely $(x_1, x_2) = \left( \frac{5}{2}, -\frac{3}{2} \right)$, is easily explained, provided that the observation of Remark \ref{tangent} is assumed. Indeed, arguments of elementary linear algebra demonstrate that for any say real vector space $W$ equipped with a symmetric bilinear form $A$, the quadratic function $\alpha \mapsto A(\alpha \, ; \alpha) - A(\alpha \, ; \nu)$ on $W$ becomes homogeneous exactly around $\frac{\nu}{2}$ (independently of the form $A$). Taking now an embedding of any smooth $2d$-dimensional $X$ into a smooth variety $V$, we apply this fact to the symmetric bilinear form $(\alpha_1, \alpha_2) \mapsto \text{deg} \left( [X] \cdot \alpha_1 \cdot \alpha_2 \right)$ on $N^d(V)$, choosing here $\nu$ freely. We recall, of course, the common second Chern class expression $5c_1^2 - 8c_2 = 5 \cdot \Sigma_{2,0} - 3 \cdot \Sigma_{1,1} = \nu$.
\end{remark}

\begin{remark}
That the two quadratic functions $Q_{F \subset G(2,V)}$ and $Q_{Y_\sigma \subset G(6,V)}$ attain the same minimum value (namely $-207$) at this point also admits an explanation. Indeed, the value of the above quadratic function $\alpha \mapsto A(\alpha \, ; \alpha) - A(\alpha \, ; \nu)$ at its vertex $\frac{\nu}{2}$ is exactly $-\frac{1}{4}A(\nu \, ; \nu)$. Returning to the embedding $X \subset V$, if in this case in addition $i^*(\nu) = c_d \left( \mathcal{T}_X \right)$, then this value is in fact $-\frac{1}{4} c_d^2 \left( \mathcal{T}_X \right)$. The equality thus reflects exactly the equality of Chern numbers $c_2^2 \left( \mathcal{T}_F \right) = c_2^2 \left( \mathcal{T}_{Y_\sigma} \right) = 4 \cdot 207 = 828$. This equality in turn follows from the deformation equivalence of $F$ and $Y_\sigma$, a consequence of say Huybrechts \cite[\S 7, Thm. 1.1]{Huybrechts:2016aa} as well as \cite[Prop. 2]{Beauville:1985aa} and \cite[Thm. 4.1]{Debarre:2010aa}.
\end{remark}

\begin{example}[A non-ambient surface] \label{nonambient}
It is shown in \cite[Rem. 2.5]{Debarre:2010aa} that $\sigma$ can be chosen so that $Y_\sigma$, while remaining smooth and hyper-K\"{a}hler, contains a plane. This plane in $Y_\sigma$ arises from a plane in the Grassmannian which is contained in $Y_\sigma$, and not from, say, an ambient subvariety of codimension 2 in $G(6,V)$ which is generically transverse to $Y_\sigma$.

In fact, it can be proven that $\mathbb{P}^2 \subset Y_\sigma$ is not ambient in $G(6,V)$. If this plane were ambient, with $[\mathbb{P}^2] = i^*(\alpha)$ say, then Proposition \ref{euler} (using $r = 1$) and Proposition \ref{alternate} (using $r = 1, q = 9$) would demonstrate that $Q_{Y_\sigma \subset G(6,V)}(\alpha) = 6$. Yet the function $Q_{Y_\sigma \subset G(6,V)}$ never assumes the value $6$ in $N^2(G(6,V)) \cong \mathbb{Z}^2$.
\end{example}

\begin{example}[The surface of $6$-spaces which intersect a fixed $3$-space]
We study further the surfaces $S \subset Y_\sigma$ with $[S] = i^*(1,0)$ in $N^2(Y_\sigma)$. A family of such surfaces is given by the smooth intersections $S$ of the expected codimension of $Y_\sigma$ with the dependency locus in $G(6,V)$ of three sections of the tautological quotient bundle $Q$. Indeed, $c_2(Q) = \Sigma_{2,0} = (1,0)$ in $N^2(G(6,V))$.

The results of Harris and Tu \cite[p. 474]{Harris:1984aa}, again, determine directly the Chern numbers of such $S$:
\begin{equation*}\text{deg} \left( c_1^2(\mathcal{T}_S) \right) = 900, \quad \text{deg} \left( c_2(\mathcal{T}_S) \right) = 972.\end{equation*}
The difference $\text{deg} \left( c_1^2(\mathcal{T}_S) - c_2(\mathcal{T}_S) \right)$ is $Q_{Y_\sigma \subset G(6,V)}(1,0) = -72$, again as predicted by Proposition \ref{core}. These numbers also give the holomorphic Euler characteristic $\chi(S, \mathcal{O}_S) = 156$.

We demonstrate by a further method that the Chern numbers of $S$ are as given by \cite{Harris:1984aa} above. (I would like to thank Steven Sam for explaining this procedure.)

The form $\sigma$ defining $Y_\sigma$ gives a map of bundles $\mathcal{O}_{G(6,V)} \rightarrow \bigwedge^3(\mathscr{E}_6)$. Dualizing this map gives the Koszul complex $\mathbf{F}^\bullet$ as below, in which the degree of each term is indicated beneath it; because $Y_\sigma$ is of the expected codimension and $G(6,V)$ is locally Cohen--Macaulay, the Koszul complex $\mathbf{F}^\bullet$ on $G(6,V)$ is in fact exact (see for example \cite[Ex. 17.20]{Eisenbud:1995aa}):
\begin{equation*}
\begin{tabular}{*{14}{>{$}c<{$}@{\hskip 0.1in}}}
\mathbf{F}^\bullet \colon & 0 & \rightarrow & \bigwedge^{20} \bigwedge^3 (\mathscr{I}_6) & \rightarrow \cdots \rightarrow & \bigwedge^2 \bigwedge^3 (\mathscr{I}_6) & \rightarrow & \bigwedge^3 (\mathscr{I}_6) & \rightarrow & \mathcal{O}_{G(6,V)} & \rightarrow & \mathcal{O}_{Y_\sigma} & \rightarrow & 0. \\
& & & \scriptstyle{-20} & & \scriptstyle{-2} & & \scriptstyle{-1} & & \scriptstyle{0} & & &
\end{tabular}
\end{equation*}

Assuming now that a map $\mathcal{O}_{G(6,V)}^3 \rightarrow Q$ defines a dependency locus $S'$ of the expected codimension 2 in $G(6,V)$, we have an Eagon--Northcott complex (in this case, a Hilbert--Burch complex) resolving the structure sheaf $\mathcal{O}_{S'}$ of $S'$ (see Eisenbud \cite[\S A2H]{Eisenbud:2005aa}, Gruson, Sam, and Weyman \cite[\S 1.1, Thm. 7]{Gruson:2013aa}):
\begin{equation*}
\begin{tabular}{*{12}{>{$}c<{$}@{\hskip 0.1in}}}
\mathbf{HB}^\bullet \colon & 0& \rightarrow & \left( \bigwedge^4(Q^*) \right)^{\oplus 3} & \rightarrow & \bigwedge^{3}(Q^*) & \rightarrow & \mathcal{O}_{G(6,V)} & \rightarrow & \mathcal{O}_{S'} & \rightarrow & 0. \\
& & & \scriptstyle{-2} & & \scriptstyle{-1} & & \scriptstyle{0} & & & &
\end{tabular}
\end{equation*}
Finally, $Y_\sigma$ and $S'$, both degeneracy loci of the expected codimension, are Cohen--Macaulay, and furthermore they intersect in the expected dimension, so that the tensor product $\left( \mathbf{F} \otimes \mathbf{HB} \right)^\bullet$ resolves the structure sheaf $\mathcal{O}_S$ of the intersection $S$ (here we view $\mathcal{O}_S$ as a sheaf on the Grassmannian $G(6,V)$). The $-i^\text{th}$ term of this complex is given below:
\begin{equation*}
\begin{tabular}{*{4}{>{$}c<{$}@{\hskip 0.08in}}}
\cdots \rightarrow & \left( \bigwedge^i \bigwedge^3 (\mathscr{I}_6) \right) \oplus \left( \bigwedge^{i - 1} \bigwedge^3 (\mathscr{I}_6) \otimes \bigwedge^3 (Q^*) \right) \oplus \left( \bigwedge^{i - 2} \bigwedge^3 (\mathscr{I}_6) \otimes \left( \bigwedge^4 (Q^*) \right) ^{\oplus 3} \right) & \rightarrow \cdots. \\
& \scriptstyle{-i} &
\end{tabular}
\end{equation*}
Using now that $H^k(S, \mathcal{O}_S) = H^k(G(6,V),\mathcal{O}_S)$ (see \cite[III, Lem. 2.10]{Hartshorne:1977aa}), the cohomology $H^k(S, \mathcal{O}_S)$ is thus given as the hypercohomology $\mathbb{H}^k \left( G(6,V), \left( \mathbf{F} \otimes \mathbf{HB} \right)^\bullet \right)$ of the tensored complex above. Extending each term $\left( \mathbf{F} \otimes \mathbf{HB} \right)^{-i}$ of this complex to a resolution $\left( \mathbf{F} \otimes \mathbf{HB} \right)^{-i} \rightarrow I^{-i,\bullet}$ of injective sheaves, the vector spaces of global sections of the sheaves $I^{\bullet, \bullet}$ form a double complex whose total cohomology computes the hypercohomology $\mathbb{H}^k \left( G(6,V), \left( \mathbf{F} \otimes \mathbf{HB} \right)^\bullet \right)$ and thus the cohomology $H^k(S, \mathcal{O}_S)$ (see \cite[Lem. 8.5]{Voisin:2002aa}).

This total cohomology is itself computed by a spectral sequence whose $E_0$ page is given by $E_0^{p,q} = I^{p,q}$. To compute the $E_1$ page, we must take cohomology vertically. This amounts to computing the sheaf cohomology of each of the entries $\left( \mathbf{F} \otimes \mathbf{HB} \right)^{-i}$ of the complex above.

Each term of the resolution $\left( \mathbf{F} \otimes \mathbf{HB} \right)^{-i}$, meanwhile, is expressible, using an ``outer plethysm'' decomposition, as a combination of Schur functors of the bundles $\mathscr{E}_6$ and $Q^*$ on $G(6,V)$. For example, the expression
\begin{equation*}\bigwedge^{i} \bigwedge^3 (\mathscr{I}_6) \cong \bigoplus_j \left( \mathbb{S}_{\lambda_j} (\mathscr{E}_6) \right)^{\oplus e_j}\end{equation*}
of $\bigwedge^i \bigwedge^3 (\mathscr{I}_6)$ as a combination of Schur functors of $\mathscr{E}_6$ is obtained through the LiE routine \textit{alt\_tensor} \cite[\S 4.5]{Leeuwen:aa}; more precisely, defining the converter into partition coordinates \textit{to\_eps} as in \cite[\S 5.8.1]{Leeuwen:aa}, the decomposition polynomial $\sum_j e_j X^{\lambda_j}$ of $\bigwedge^{i} \bigwedge^3 (\mathscr{I}_6)$ (see \cite[\S 3.5]{Leeuwen:aa}) is generated by the LiE command \\
\\
\begin{verb}$reverse(to_eps(alt_tensor(i, [0,0,1,0,0,3], A5T1)))$\end{verb},\\
\\
where \textit{reverse} maps $\left( \lambda_{j_1}, \ldots , \lambda_{j_6} \right) \mapsto \left( -\lambda_{j_6}, \ldots , -\lambda_{j_1} \right)$ and extends to polynomials by $\mathbb{Z}$-linearity.

More generally, each term $\left( \mathbf{F} \otimes \mathbf{HB} \right)^{-i}$ of the above complex admits an expression
\begin{equation*}\left( \mathbf{F} \otimes \mathbf{HB} \right)^{-i} \cong \bigoplus_j \left( \mathbb{S}_{\lambda^{(1)}_j} (\mathscr{E}_6) \otimes \mathbb{S}_{\lambda^{(2)}_j} (Q^*) \right)\end{equation*}
for (possibly repeated) weakly decreasing integer sequences $\lambda^{(1)}_j = \left( \lambda^{(1)}_{j_1}, \ldots , \lambda^{(1)}_{j_6} \right)$ and $\lambda^{(2)}_j = \left( \lambda^{(2)}_{j_1}, \ldots , \lambda^{(2)}_{j_4} \right)$. Yet the cohomology of any such summand is determined exhaustively by Bott's theorem (see \cite[\S 2.3, Thm. 8]{Gruson:2013aa}).

This supplies the values of the $E_1$ page of the spectral sequence. Employing the fact that we must have $E_\infty^{p,q} = 0$ whenever $p + q < 0$ or $p + q > 2$, we see that the spectral sequence collapses at the $E_2$ page, revealing the Hodge numbers:
\begin{equation*}h^0(S, \mathcal{O}_S) = 1, \quad h^1(S, \mathcal{O}_S) = 0, \quad h^2(S, \mathcal{O}_S) = 155.\end{equation*}
These agree with the holomorphic Euler characteristic determined by the Chern numbers computed using \cite{Harris:1984aa} above.
\end{example}

\subsection{Products of projective spaces}

We now study pairs $X \subset V$ in which $V$ is a product of projective spaces. Recall that $CH^*(\mathbb{P}^a \times \mathbb{P}^b) \cong \mathbb{Z}[\alpha, \beta] / (\alpha^{a + 1}, \beta^{b + 1})$, where $\alpha$ and $\beta$ are the pullbacks to $\mathbb{P}^a \times \mathbb{P}^b$ of the hyperplane classes on $\mathbb{P}^a$ and $\mathbb{P}^b$, respectively (see \cite[Thm. 2.10]{Eisenbud:2016aa}), and in particular when $a, b \geq 2$, $CH^2(\mathbb{P}^a \times \mathbb{P}^b) \cong \mathbb{Z}^3$ via $\alpha^2$, $\alpha\beta$, and $\beta^2$. Finally, once again $CH^*(\mathbb{P}^a \times \mathbb{P}^b) \rightarrow N^*(\mathbb{P}^a \times \mathbb{P}^b)$ is an isomorphism (see Fulton, MacPherson, Sottile, and Sturmfels \cite[Thm. 2 Cor.]{Fulton:1994aa}). 

Many pairs $X \subset \mathbb{P}^a \times \mathbb{P}^b$ of this kind in fact fail to be decent, in that the quadratic form of Definition \ref{pair} fails to be positive definite (see Remarks \ref{picdecentgen} and \ref{picdecentopp}). This prevents the normal application of Theorem \ref{bound}. On the other hand, various factors permit a partial recovery of the theory. We observe first that the natural identification $N^2(\mathbb{P}^a \times \mathbb{P}^b) \cong \mathbb{Z}^3$ features the additional property that all codimension-2 subvarieties $S' \subset \mathbb{P}^a \times \mathbb{P}^b$ satisfy $[S'] \in \mathbb{N}^3$ in $N^2(\mathbb{P}^a \times \mathbb{P}^b)$. Indeed, this follows from the general theory of toric varieties \cite[Thm. 1 Cor. (i)]{Fulton:1994aa}. (I would like to thank John Ottem for explaining this.) If we consider, instead of all ambient surfaces $S$ in $X$, only those which come directly from a subvariety $S' \subset V$ in the sense of condition \ref{ambient1}. of Definition \ref{ambient}, then we may restrict the function $Q_{X \subset \mathbb{P}^a \times \mathbb{P}^b}$ to the subset $\mathbb{N}^3$ of $\mathbb{Z}^3$. Finally, even when this restriction of $Q_{X \subset \mathbb{P}^a \times \mathbb{P}^b}$ attains arbitrarily negative values, we may often supplement Proposition \ref{euler} with Proposition \ref{alternate}. We illustrate this technique in what follows.

\begin{example}[The product of the quintic threefold and an elliptic curve. Number 41 in \cite{Gray:2013aa}]
The complete intersection in $\mathbb{P}^2 \times \mathbb{P}^4$ of general hypersurfaces of bidegrees $(0,5)$ and $(3,0)$ gives a smooth Calabi--Yau fourfold $X$ which is clearly the product of an elliptic curve and a smooth quintic threefold.

Because $\mathcal{T}_{\mathbb{P}^2 \times \mathbb{P}^4} \cong a^*(\mathcal{T}_{\mathbb{P}^2}) \oplus b^*(\mathcal{T}_{\mathbb{P}^4})$, where $a$ and $b$ say are the projections, pulling back Euler sequences from the factors gives:
\begin{equation*}0 \rightarrow \mathcal{O}_{\mathbb{P}^2 \times \mathbb{P}^4} \oplus \mathcal{O}_{\mathbb{P}^2 \times \mathbb{P}^4} \rightarrow \left( \mathcal{O}_{\mathbb{P}^2 \times \mathbb{P}^4}(1,0) \right)^{\oplus 3} \oplus \left( \mathcal{O}_{\mathbb{P}^2 \times \mathbb{P}^4}(0,1) \right)^{\oplus 5} \rightarrow \mathcal{T}_{\mathbb{P}^2 \times \mathbb{P}^4} \rightarrow 0.\end{equation*}
We also have the following normal bundle sequence:
\begin{equation*}0 \rightarrow \mathcal{T}_X \rightarrow \left. \mathcal{T}_{\mathbb{P}^2 \times \mathbb{P}^4} \right|_X \rightarrow \left. \left( \mathcal{O}_{\mathbb{P}^2 \times \mathbb{P}^4}(0,5) \oplus \mathcal{O}_{\mathbb{P}^2 \times \mathbb{P}^4}(3,0) \right) \right|_X \rightarrow 0.\end{equation*}
Putting these together, we compute the associated function $Q_{X \subset \mathbb{P}^2 \times \mathbb{P}^4}$ on $N^2(\mathbb{P}^2 \times \mathbb{P}^4) \cong \mathbb{Z}^3$:
\begin{equation*}Q_{X \subset \mathbb{P}^2 \times \mathbb{P}^4}(x_1, x_2, x_3) = 30x_2x_3 - 150x_2.\end{equation*}
The second-order part of this quadratic function is not positive definite, and the pair $X \subset \mathbb{P}^2 \times \mathbb{P}^4$ is not decent. We record an example of an application of the available theory:

\begin{theorem}
The product of an elliptic curve and a smooth quintic threefold $X \subset \mathbb{P}^2 \times \mathbb{P}^4$ does not admit a smooth rational ambient surface $S$ in $X$ satisfying $K_S^2 \geq 7$.
\end{theorem}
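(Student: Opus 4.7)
The plan is to combine Proposition \ref{alternate} with the divisibility structure visible in the explicit formula for $Q_{X \subset \mathbb{P}^2 \times \mathbb{P}^4}$, and to derive a numerical contradiction rather than invoking any finiteness bound; the failure of decency of this pair will be irrelevant.

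First, I would pin down the value of the invariant for a hypothetical smooth rational $S \subset X$ with $K_S^2 \geq 7$. Rationality yields $h^1(\mathcal{O}_S) = h^2(\mathcal{O}_S) = 0$, so $\chi(S, \mathcal{O}_S) = 1$, and the classification of minimal rational surfaces forces $K_S^2 \leq 9$ --- the minimal models are $\mathbb{P}^2$ with $K^2 = 9$ and the Hirzebruch surfaces with $K^2 = 8$, while blowing up a point strictly decreases $K^2$. Combined with the hypothesis $K_S^2 \geq 7$, this pins down $K_S^2 = q \in \{7, 8, 9\}$, so that Proposition \ref{alternate} (applied with $r = 1$) yields
\begin{equation*}
\text{deg} \left( c_1^2(\mathcal{T}_S) - c_2(\mathcal{T}_S) \right) = -12 + 2q \in \{2, 4, 6\}.
\end{equation*}

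Second, I would observe that the explicit expression $Q_{X \subset \mathbb{P}^2 \times \mathbb{P}^4}(x_1, x_2, x_3) = 30 x_2 (x_3 - 5)$ takes values in $30 \mathbb{Z}$ on all of $\mathbb{Z}^3$, so in particular the image of $Q_{X \subset \mathbb{P}^2 \times \mathbb{P}^4}$ avoids $\{2, 4, 6\}$. If such a smooth rational ambient $S$ with $[S] = i^*(\alpha)$ existed, Proposition \ref{core} would then force $Q_{X \subset \mathbb{P}^2 \times \mathbb{P}^4}(\alpha) \in \{2, 4, 6\}$, a contradiction. I do not expect any genuine obstacle here: the proof reduces to a single mod-$30$ divisibility check, and what makes it go through is that the associated function exhibits enough divisibility to rule out the small invariant values even though the pair fails to be decent.
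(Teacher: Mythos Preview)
Your proof is correct and follows essentially the same approach as the paper: both arguments pin down the invariant $\deg(c_1^2(\mathcal{T}_S) - c_2(\mathcal{T}_S))$ to the range $\{2,4,6\}$ and then invoke the divisibility of $Q_{X \subset \mathbb{P}^2 \times \mathbb{P}^4}$ by $30$ to obtain a contradiction. The only cosmetic difference is that the paper obtains the upper bound $\leq 6$ by citing Proposition~\ref{euler} (with $r = 1$), whereas you re-derive it from the classification bound $K_S^2 \leq 9$ fed into Proposition~\ref{alternate}; both routes land on the same inequality $2 \leq Q_{X \subset \mathbb{P}^2 \times \mathbb{P}^4}(\alpha) \leq 6$.
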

\begin{proof}
Given any such surface $S$, with $[S] = i^*(\alpha)$ say, Propositions \ref{euler} and \ref{alternate} here give that
\begin{equation*}2 \leq Q_{X \subset \mathbb{P}^2 \times \mathbb{P}^4}(\alpha) \leq 6.\end{equation*}
The divisibility of $Q_{X \subset \mathbb{P}^2 \times \mathbb{P}^4}$ by 30 shows that no such $\alpha$ can exist.
\end{proof}

Thus we recover, in particular, that $X \subset \mathbb{P}^2 \times \mathbb{P}^4$ does not admit an ambient plane.
\end{example}

\begin{example}[Number 130 in \cite{Gray:2013aa}]
\cite[\#130]{Gray:2013aa} gives a smooth Calabi--Yau fourfold $X$ defined as the intersection in $\mathbb{P}^4 \times \mathbb{P}^6$ of hypersurfaces of bidegrees $(0,2), (0,2), (1,1), (1,1), (1,1), (2,0)$.

Proceeding again as above, we compute the associated function $Q_{X \subset \mathbb{P}^4 \times \mathbb{P}^6}$ on $N^2(\mathbb{P}^4 \times \mathbb{P}^6) \cong \mathbb{Z}^3$:
\begin{equation*}Q_{X \subset \mathbb{P}^4 \times \mathbb{P}^6}(x_1, x_2, x_3) = 16x_1x_2 + 48x_1x_3 + 24x_2^2 + 48x_2x_3 + 8x_3^2 - 72x_1 - 128x_2 - 112x_3.\end{equation*}
The second-order part of this quadratic function is not a positive definite form. We again have certain weaker results:

\begin{theorem} \label{130}
Consider the smooth Calabi--Yau fourfold $X \subset \mathbb{P}^4 \times \mathbb{P}^6$ of \cite[\#130]{Gray:2013aa}. Let $r, q \in \mathbb{Z}$. Then at most finitely many elements of $N^2(X)$ are representable by a smooth surface $S$ in $X$ satisfying $\chi(S, \mathcal{O}_S) \leq r$ and $K_S^2 \geq q$ which arises in $X$ as a generically transverse intersection in $\mathbb{P}^4 \times \mathbb{P}^6$.
\end{theorem}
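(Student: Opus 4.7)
The plan is to combine the available bounds on $\deg\bigl(c_1^2(\mathcal{T}_S) - c_2(\mathcal{T}_S)\bigr)$ with a direct lattice-point analysis of $Q_{X \subset \mathbb{P}^4 \times \mathbb{P}^6}$ on the cone $\mathbb{N}^3$. Writing $S = i^{-1}(S')$ for a generically transversely intersecting codimension-two subvariety $S' \subset \mathbb{P}^4 \times \mathbb{P}^6$, I would set $\alpha := [S'] \in N^2(\mathbb{P}^4 \times \mathbb{P}^6)$, so that $[S] = i^*(\alpha)$ and, by the toricity observation recalled earlier in this subsection, $\alpha \in \mathbb{N}^3 \subset \mathbb{Z}^3$. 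Proposition \ref{core} identifies $Q_{X \subset \mathbb{P}^4 \times \mathbb{P}^6}(\alpha) = \deg\bigl(c_1^2(\mathcal{T}_S) - c_2(\mathcal{T}_S)\bigr)$, and Propositions \ref{euler} and \ref{alternate} then trap this value in a fixed interval:
\[
-12r + 2q \;\leq\; -12\chi(S, \mathcal{O}_S) + 2 K_S^2 \;=\; Q_{X \subset \mathbb{P}^4 \times \mathbb{P}^6}(\alpha) \;\leq\; 6\chi(S, \mathcal{O}_S) \;\leq\; 6r.
\]
It therefore suffices to show that for any fixed $L, U \in \mathbb{Z}$, only finitely many triples $(x_1, x_2, x_3) \in \mathbb{N}^3$ satisfy $L \leq Q(x_1, x_2, x_3) \leq U$.

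To that end I would rewrite $Q$ as linear in $x_1$: $Q(x_1, x_2, x_3) = x_1 \, f(x_2, x_3) + g(x_2, x_3)$, where $f(x_2, x_3) = 16 x_2 + 48 x_3 - 72$ and $g(x_2, x_3) = 24 x_2^2 + 48 x_2 x_3 + 8 x_3^2 - 128 x_2 - 112 x_3$, and split the analysis on the sign of $f(x_2, x_3)$. When $f(x_2, x_3) > 0$, the estimate $g \geq 8(x_2^2 + x_3^2) - 128 x_2 - 112 x_3$ valid on $\mathbb{N}^2$ forces $g \to \infty$ as $\|(x_2, x_3)\| \to \infty$, so the constraint $g(x_2, x_3) \leq Q(x_1, x_2, x_3) \leq U$ confines $(x_2, x_3)$ to a finite subset of $\mathbb{N}^2$; for each such pair, the positive slope $f$ together with $Q \leq U$ produces an upper bound on $x_1$. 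When $f(x_2, x_3) < 0$, the inequality $16 x_2 + 48 x_3 < 72$ itself confines $(x_2, x_3)$ to a finite set, and for each such pair the constraint $Q \geq L$ combined with the now-negative slope bounds $x_1$ above.

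The main obstacle --- and the genuinely delicate point --- is the observation that $f(x_2, x_3) \neq 0$ for every $(x_2, x_3) \in \mathbb{N}^2$: the equation $16 x_2 + 48 x_3 = 72$ reduces to $2 x_2 + 6 x_3 = 9$, which has no integer solutions by parity. Were $f$ to vanish at some $(x_2, x_3) \in \mathbb{N}^2$ with $g(x_2, x_3) \in [L, U]$, the entire ray $\{(x_1, x_2, x_3) : x_1 \in \mathbb{N}\}$ would lie in the slab $\{L \leq Q \leq U\}$, defeating finiteness. It is this parity coincidence, rather than any global positivity of the quadratic form (which is absent here, the pair $X \subset \mathbb{P}^4 \times \mathbb{P}^6$ being non-decent), that makes the argument succeed, and the conclusion is accordingly sensitive to the specific bidegrees defining $X$.
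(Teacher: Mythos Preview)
Your argument is correct and follows essentially the same route as the paper: reduce to showing that $Q$ takes any fixed range of values at only finitely many points of $\mathbb{N}^3$, bound $(x_2,x_3)$ first using the quadratic growth of the $x_1$-free part, and then observe that the coefficient $16x_2 + 48x_3 - 72$ of $x_1$ never vanishes on $\mathbb{N}^2$. The only cosmetic differences are that the paper treats one level set $Q = p$ at a time and bounds $x_2$ and $x_3$ separately (via $24x_2^2 - 128x_2 > p$ and $8x_3^2 - 112x_3 > p$, together with nonnegativity of the remaining cross terms once $x_2 \ge 5$ or $x_3 \ge 3$), whereas you treat the whole slab at once and split on the sign of $f$; and the paper verifies nonvanishing of the $x_1$-coefficient by reducing modulo~$16$, which is the same parity obstruction you exhibit after dividing by~$8$.
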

\begin{proof}
By Propositions \ref{euler} and \ref{alternate} and the remarks beginning this section, it suffices to show that the inequality
\begin{equation*}-12r + 2q \leq Q_{X \subset \mathbb{P}^4 \times \mathbb{P}^6}(x_1, x_2, x_3) \leq 6r\end{equation*}
has at most finitely many solutions in $\mathbb{N}^3$. For this it is enough to show that $Q_{X \subset \mathbb{P}^4 \times \mathbb{P}^6}(x_1, x_2, x_3) = p$ has at most finitely many solutions in $\mathbb{N}^3$ for each $p \in \mathbb{Z}$.

Let $p \in \mathbb{Z}$ be arbitrary. Choose $y_2 \geq 5$ so large that $x_2 > y_2$ implies that $24x_2^2 - 128x_2 > p$. Choose $y_3 \geq 3$ so large that $x_3 > y_3$ implies that $8x_3^2 - 112x_3 > p$. Then $Q_{X \subset \mathbb{P}^4 \times \mathbb{P}^6}(x_1, x_2, x_3) > p$ whenever either $x_2 > y_2$ or $x_3 > y_3$. Thus it suffices to count solutions $(x_1, x_2, x_3)$ to $Q_{X \subset \mathbb{P}^4 \times \mathbb{P}^6}(x_1, x_2, x_3) = p$ with $(x_2, x_3) \in \{0, \ldots , y_2\} \times \{0, \ldots , y_3\}$. For each of the finitely many such choices of $(x_2, x_3)$, at most one solution $(x_1, x_2, x_3)$ is accrued. Indeed, a fixed choice of $(x_2, x_3)$ yields a linear equation in $x_1$, which can have infinitely many solutions only perhaps if its first-order coefficient $16x_2 + 48x_3 - 72$ is zero. Reducing this expression modulo 16 shows that this cannot occur.
\end{proof}
\end{example}

\begin{example}[Number 133 in \cite{Gray:2013aa}]
\cite[\#133]{Gray:2013aa} gives a smooth Calabi--Yau fourfold $X$ defined as the smooth complete intersection in $\mathbb{P}^5 \times \mathbb{P}^5$ of hypersurfaces of bidegrees $(0,2), (0,2), (1,1), (1,1), (2,0), (2,0)$.

As above, we have the associated function $Q_{X \subset \mathbb{P}^5 \times \mathbb{P}^5}$ on $N^2(\mathbb{P}^5 \times \mathbb{P}^5) \cong \mathbb{Z}^3$ as follows:
\begin{equation*}Q_{X \subset \mathbb{P}^5 \times \mathbb{P}^5}(x_1, x_2, x_3) = 32x_1x_2 + 64x_1x_3 + 32x_2^2 + 32x_2x_3 - 96x_1 - 128x_2 - 96x_3.\end{equation*}
We record a result even weaker than that given above:
\begin{theorem}
Consider the smooth Calabi--Yau fourfold $X \subset \mathbb{P}^5 \times \mathbb{P}^5$ of \cite[\#133]{Gray:2013aa}. Let $r, q \in \mathbb{Z}$ be such that $-12r + 2q > -96$. Then at most finitely many elements of $N^2(X)$ are representable by a smooth surface $S$ in $X$ satisfying $\chi(S, \mathcal{O}_S) \leq r$ and $K_S^2 \geq q$ which arises in $X$ as a generically transverse intersection in $\mathbb{P}^5 \times \mathbb{P}^5$.
\end{theorem}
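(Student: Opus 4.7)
The plan is to proceed exactly as in the proof of Theorem \ref{130}: Propositions \ref{euler} and \ref{alternate}, combined with the toric nonnegativity observation preceding this subsection's first example, reduce the claim to showing that the integer solutions $(x_1, x_2, x_3) \in \mathbb{N}^3$ of
\[
-12r + 2q \leq Q_{X \subset \mathbb{P}^5 \times \mathbb{P}^5}(x_1, x_2, x_3) \leq 6r
\]
form a finite set under the hypothesis $-12r + 2q > -96$. As with the fourfold \#130, the difficulty is that the second-order part of $Q_{X \subset \mathbb{P}^5 \times \mathbb{P}^5}$ is not positive definite, so Theorem \ref{bound} is inapplicable and the argument must be carried out by hand.

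My first step would be to bound $x_2$ in terms of $r$ alone. Regrouping
\[
Q_{X \subset \mathbb{P}^5 \times \mathbb{P}^5} = 32 x_2(x_2 - 4) + (32 x_2 - 96)(x_1 + x_3) + 64 x_1 x_3,
\]
one checks that for $x_2 \geq 3$ all three summands are nonnegative, and the first alone grows without bound; hence $Q \leq 6r$ forces $x_2 \leq N(r)$ for some explicit $N$. After pigeonholing on $x_2 = x_2^*$ constant, the function becomes the symmetric expression
\[
Q_{X \subset \mathbb{P}^5 \times \mathbb{P}^5}(x_1, x_2^*, x_3) = 64 x_1 x_3 + (32 x_2^* - 96)(x_1 + x_3) + 32(x_2^*)^2 - 128 x_2^*,
\]
which rewrites as $64(x_1 + c)(x_3 + c) + \mathrm{const}$ with $c = (x_2^* - 3)/2$. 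For $x_2^* \notin \{1, 3\}$, case analysis on the sign of $c$ shows that the combined bounds $-12r + 2q \leq Q \leq 6r$ confine $(x_1, x_3)$ to a bounded subset of $\mathbb{N}^2$. The exceptional values $x_2^* = 3$ (giving $c = 0$) and $x_2^* = 1$ (giving $c = -1$) are precisely those for which $c$ is a nonpositive integer, so the product $(x_1 + c)(x_3 + c)$ vanishes identically on entire infinite rays in $\mathbb{N}^2$; direct substitution yields $Q_{X \subset \mathbb{P}^5 \times \mathbb{P}^5} \equiv -96$ on $(x_1, 3, 0)$ and $(0, 3, x_3)$, and $Q_{X \subset \mathbb{P}^5 \times \mathbb{P}^5} \equiv -160$ on $(x_1, 1, 1)$ and $(1, 1, x_3)$.

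The crux is that each of these four constant values is $\leq -96$, so all the bad families are excluded by the strict hypothesis $Q \geq -12r + 2q > -96$. The main obstacle, foreshadowed in the preamble to this subsection, is precisely the need to identify these finitely many degenerate rays in $\mathbb{N}^3$ along which the indefinite form $Q_{X \subset \mathbb{P}^5 \times \mathbb{P}^5}$ fails to dominate; the constant $-96$ in the hypothesis is calibrated exactly to excise them.
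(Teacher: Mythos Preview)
Your proof follows essentially the same route as the paper's: both rest on the factorization $Q = 16(2x_1+x_2-3)(2x_3+x_2-3) + 16x_2^2 - 32x_2 - 144$ (your $64(x_1+c)(x_3+c)$ is exactly this product after clearing the denominator $2c = x_2-3$), bound $x_2$, and then for each fixed $x_2$ reduce to a hyperbola in $(x_1,x_3)$ whose degenerations---precisely along the rays $2x_1+x_2=3$ or $2x_3+x_2=3$ in $\mathbb{N}^3$---produce the constant values $-96$ and $-160$ that the hypothesis $-12r+2q>-96$ is calibrated to exclude. Two small slips to tidy: the summand $32x_2(x_2-4)$ in your regrouping equals $-96$, not something nonnegative, at $x_2=3$ (use $x_2\geq 4$ for the bounding step); and for $x_2^*\in\{1,3\}$ you should add a word about the points \emph{off} the constant rays, where the product $(x_1+c)(x_3+c)$ is nonzero and the same level-set finiteness applies.
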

\begin{proof}
We show as before that, for $r, q$ chosen as above,
\begin{equation*}-12r + 2q \leq Q_{X \subset \mathbb{P}^5 \times \mathbb{P}^5}(x_1, x_2, x_3) \leq 6r\end{equation*}
has at most finitely many solutions in $\mathbb{N}^3$. For this it is enough to show that $Q_{X \subset \mathbb{P}^5 \times \mathbb{P}^5}(x_1, x_2, x_3) = p$ has at most finitely many solutions in $\mathbb{N}^3$ for each $p > -96$.

The divisibility of $Q_{X \subset \mathbb{P}^5 \times \mathbb{P}^5}$ by $32$ shows that it suffices to consider $Q_{X \subset \mathbb{P}^5 \times \mathbb{P}^5}(x_1, x_2, x_3) = p$ for the values $p = -64, -32, 0, \ldots$. We claim that each such choice of $p$ yields an equation with only finitely many solutions.

We factor the equation $Q_{X \subset \mathbb{P}^5 \times \mathbb{P}^5}(x_1, x_2, x_3) = -64$, writing instead
\begin{equation*}(2x_1 + x_2 - 3)(2x_3 + x_2 - 3) = -x_2^2 + 2x_2 + 5.\end{equation*}
If $x_2 > 3$, then the left-hand side of the above equation is nonnegative while the right-hand side is negative, so we consider only solutions $(x_1, x_2, x_3)$ with $x_2 \in \{0, 1, 2, 3\}$. Each such choice of $x_2$ yields a hyperbolic equation in $x_1$ and $x_3$:
\begin{equation*}
\begin{array}{cc} 
\begin{aligned}
x_2 = 0: \quad \\
x_2 = 1: \quad \\
x_2 = 2: \quad \\
x_2 = 3: \quad \\
\end{aligned} &
\begin{aligned}
(2x_1 - 3)(2x_3 - 3) &= 5, \\
(2x_1 - 2)(2x_3 - 2) &= 6, \\
(2x_1 - 1)(2x_3 - 1) &= 5, \\
(2x_1)(2x_3) &= 2. \\
\end{aligned}
\end{array}
\end{equation*}

The solutions of each such equation are enumerated by counting the factorizations of the right-hand constant term into two factors of the correct parity. Each such constant term admits at most finitely many such factorizations, unless, perhaps, that term is zero and the hyperbola degenerates to a product of two lines. This does not occur here, and we get 4 solutions to $Q_{X \subset \mathbb{P}^5 \times \mathbb{P}^5}(x_1, x_2, x_3) = -64$ in $\mathbb{N}^3$. (I would like to thank David Savitt for this argument.)

Incrementing $p$ by 32 increases each of the constant terms as above by 2, with the caveat that the bound $x_2 \in \{0, 1, 2, 3\}$ may cease to hold and we may be forced to consider higher values of $x_2$. On the other hand, when $x_2 > 3$, the left-hand side of the factored equation above is not just nonnegative but positive, and we again need not fear the degeneration of the hyperbola into lines.
\end{proof}

Decrementing $p$ by 32 decreases each of the constant terms as above by 2, yielding for the lowermost equation the infinite families of solutions $(x_1, 3, 0), x_1 \geq 0$ and $(0, 3, x_3), x_3 \geq 0$. Thus we see that the above result is optimal.

\begin{remark}
Number-theoretic techniques which much more sophisticatedly determine or estimate the number of lattice points in situations such as those above exist. See, for example, Kr\"{a}tzel \cite{Kratzel:1988aa}.
\end{remark}
\end{example}

\bibliography{bibliography} {}

\begin{thebibliography}{10}

\bibitem{Barth:2004aa}
W.~P. Barth, K.~Hulek, C.~A.~M. Peters, and A.~Van~de Ven.
\newblock {\em Compact Complex Surfaces}, volume~4 of {\em A Series of Modern
  Surveys in Mathematics}.
\newblock Springer, second enlarged edition, 2004.

\bibitem{Beauville:1983aa}
A.~Beauville.
\newblock Vari{\'e}t{\'e}s {K}{\"a}hleriennes dont la premi{\`e}re classe de
  {C}hern est nulle.
\newblock {\em J. Differential Geometry}, 18(4):755--782, 1983.

\bibitem{Beauville:2007aa}
A.~Beauville.
\newblock On the splitting of the {B}loch--{B}eilinson filtration.
\newblock In J.~Nagel and C.~Peters, editors, {\em Algebraic Cycles and
  Motives}, volume~2 of {\em London Mathematical Society Lecture Note Series}.
  Cambridge University Press, 2007.

\bibitem{Beauville:1985aa}
A.~Beauville and R.~Donagi.
\newblock La vari{\'e}t{\'e} des droites d'une hypersurface cubique de
  dimension 4.
\newblock {\em C. R. Acad. Sc. Paris}, 301(14):703--706, 1985.

\bibitem{Bloch:1976aa}
S.~Bloch.
\newblock An example in the theory of algebraic cycles.
\newblock In M.~R. Stein, editor, {\em Algebraic K-Theory}. Springer, 1976.

\bibitem{Bloch:1974aa}
S.~Bloch and A.~Ogus.
\newblock Gersten's conjecture and the homology of schemes.
\newblock {\em Ann. Scient. {\'E}c. Norm. Sup.}, 7(2):181--202, 1974.

\bibitem{Bloch:1983aa}
S.~Bloch and V.~Srinivas.
\newblock Remarks on correspondences and algebraic cycles.
\newblock {\em Am. J. Math.}, 105(5):1235--1253, 1983.

\bibitem{Brunner:1997aa}
I.~Brunner, M.~Lynker, and R.~Schimmrigk.
\newblock Unification of {M}- and {F}-theory {C}alabi--{Y}au fourfold vacua.
\newblock {\em Nucl. Phys. B}, 498:156--174, 1997.

\bibitem{Candelas:1991aa}
P.~Candelas, X.~C.~De La~Ossa, P.~S. Green, and L.~Parkes.
\newblock A pair of {C}alabi--{Y}au manifolds as an exactly soluble
  superconformal theory.
\newblock {\em Nucl. Phys. B}, 359:21--74, 1991.

\bibitem{Charles:2013aa}
F.~Charles and E.~Markman.
\newblock The standard conjectures for holomorphic symplectic varieties
  deformation equivalent to {H}ilbert schemes of {K}3 surfaces.
\newblock {\em Compositio Math.}, 149(481-494), 2013.

\bibitem{Ciliberto:2002aa}
C.~Ciliberto and V.~Di~Gennaro.
\newblock Boundedness for surfaces on smooth fourfolds.
\newblock {\em J. Pure Appl. Algebr.}, 173:273--279, 2002.

\bibitem{Cohn:1980aa}
H.~Cohn.
\newblock {\em Advanced Number Theory}.
\newblock Dover Publications, 1980.

\bibitem{Debarre:2010aa}
O.~Debarre and C.~Voisin.
\newblock Hyper-{K}{\"a}hler fourfolds and {G}rassmann geometry.
\newblock {\em J. Reine Angew. Math.}, 649:63--87, October 2010.

\bibitem{Dinh:2006aa}
T.-C. Dinh and V.-A. Nguy{\^e}n.
\newblock The mixed {H}odge--{R}iemann bilinear relations for compact
  {K}{\"a}hler manifolds.
\newblock {\em Geom. Funct. Anal.}, 16:838--849, 2006.

\bibitem{Eisenbud:1995aa}
D.~Eisenbud.
\newblock {\em Commutative Algebra with a View Toward Algebraic Geometry},
  volume 150 of {\em Graduate Texts in Mathematics}.
\newblock Springer-Verlag, 1995.

\bibitem{Eisenbud:2005aa}
D.~Eisenbud.
\newblock {\em The Geometry of Syzygies}.
\newblock Springer, 2005.

\bibitem{Eisenbud:2016aa}
D.~Eisenbud and J.~Harris.
\newblock {\em 3264 \& All That}.
\newblock Cambridge University Press, 2016.

\bibitem{Ellingsrud:1989aa}
G.~Ellingsrud and C.~Peskine.
\newblock Sur les surfaces lisses de $\mathbb{P}^4$.
\newblock {\em Invent. Math.}, 95:1--11, 1989.

\bibitem{Ferretti:2012aa}
A.~Ferretti.
\newblock The {C}how ring of double {EPW} sextics.
\newblock {\em Algebra Number Theory}, 6:539--560, 2012.

\bibitem{Fulton:1984aa}
W.~Fulton.
\newblock {\em Intersection Theory}.
\newblock Springer, 1984.

\bibitem{Fulton:1994aa}
W.~Fulton, R.~MacPherson, F.~Sottile, and B.~Sturmfels.
\newblock Intersection theory on spherical varieties.
\newblock {\em J. Algebraic Geometry}, 4:181--193, 1994.

\bibitem{Gray:2013aa}
J.~Gray, A.~S. Haupt, and A.~Lukas.
\newblock All complete intersection {C}alabi--{Y}au four-folds.
\newblock {\em J. High Energy Phys.}, 1307(70), 2013.

\bibitem{Greene:1995aa}
B.~R. Greene, D.~R. Morrison, and M.~R. Plesser.
\newblock Mirror manifolds in higher dimension.
\newblock {\em Commun. Math. Phys.}, 173(3):559--597, 1995.

\bibitem{Gruson:2013aa}
L.~Gruson, S.~V. Sam, and J.~Weyman.
\newblock Moduli of abelian varieties, {V}inberg theta-groups, and free
  resolutions.
\newblock In I.~Peeva, editor, {\em Commutative Algebra: Expository Papers
  Dedicated to David Eisenbud on the Occasion of His 65th Birthday}. Springer,
  2013.

\bibitem{Harris:1984aa}
J.~Harris and L.~Tu.
\newblock Chern numbers of kernel and cokernel bundles.
\newblock {\em Invent. Math.}, 75:467--476, 1984.

\bibitem{Hartshorne:1977aa}
R.~Hartshorne.
\newblock {\em Algebraic Geometry}, volume~52 of {\em Graduate Texts in
  Mathematics}.
\newblock Springer-Verlag, 1977.

\bibitem{Huybrechts:2016aa}
D.~Huybrechts.
\newblock {\em Lectures on {K}3 Surfaces}.
\newblock Number 158 in Cambridge Studies in Advanced Mathematics. Cambridge
  University Press, 2016.

\bibitem{Jannsen:2000aa}
U.~Jannsen.
\newblock Equivalence relations on algebraic cycles.
\newblock In B.~B. Gordon, J.~D. Lewis, S.~M{\"u}ller-Stach, S.~Shuji, and
  N.~Yui, editors, {\em The Arithmetic and Geometry of Algebraic Cycles}.
  Springer Science and Business Media, 2000.

\bibitem{Kleiman:1971aa}
S.~Kleiman.
\newblock Les theoremes de finitude pour le foncteur de {P}icard.
\newblock In A.~Dold and B.~Eckmann, editors, {\em Th{\'e}orie des
  Intersections et Th{\'e}or{\`e}me de Riemann-Roch}, volume 225 of {\em
  Lecture Notes in Mathematics}. Springer-Verlag, 1971.

\bibitem{Kollar:1999aa}
J.~Koll{\'a}r.
\newblock {\em Rational Curves on Algebraic Varieties}.
\newblock Springer, 1999.

\bibitem{Kratzel:1988aa}
E.~Kr{\"a}tzel.
\newblock {\em Lattice Points}.
\newblock Kluwer Academic Publishers, 1988.

\bibitem{Laterveer:2015aa}
R.~Laterveer.
\newblock Yet another version of {M}umford's theorem.
\newblock {\em Arch. Math.}, 104(2):125--131, 2015.

\bibitem{Lawrence:2014aa}
J.~D. Lawrence.
\newblock {\em A Catalogue of Special Plane Curves}.
\newblock Dover Publications, 2014.

\bibitem{Lewis:1989aa}
J.~D. Lewis.
\newblock Towards a generalization of {M}umford's theorem.
\newblock {\em J. Math. Kyoto Univ.}, 29(2):267--272, 1989.

\bibitem{Mumford:1985aa}
D.~Mumford.
\newblock {\em Abelian Varieties}.
\newblock Tata Institute of Fundamental Research Studies in Mathematics. Oxford
  University Press, second edition, 1985.

\bibitem{Murre:1994aa}
J.~P. Murre.
\newblock Algebraic cycles and algebraic aspects of cohomology and {K}-theory.
\newblock In A.~Albano and F.~Bardelli, editors, {\em Algebraic Cycles and
  Hodge Theory}, Lecture Notes in Mathematics, pages 93--152. Springer, 1994.

\bibitem{Timorin:1998aa}
V.~A. Timorin.
\newblock Mixed {H}odge--{R}iemann bilinear relations in a linear context.
\newblock {\em Funct. Anal. Appl.}, 32(4):268--272, 1998.

\bibitem{Leeuwen:aa}
M.~A.~A. van Leeuwen, A.~M. Cohen, and B.~Lisser.
\newblock {\em {LiE} Manual}.
\newblock Computer Algebra Group of CWI, Kruislaan 413, 1098 SJ Amsterdam, The
  Netherlands, 2.2.2 edition.

\bibitem{Voisin:2002aa}
C.~Voisin.
\newblock {\em Hodge Theory and Complex Algebraic Geometry {I}}, volume~76 of
  {\em Cambridge Studies in Advanced Mathematics}.
\newblock Cambridge University Press, 2002.

\bibitem{Voisin:2003aa}
C.~Voisin.
\newblock {\em Hodge Theory and Complex Algebraic Geometry {II}}, volume~77 of
  {\em Cambridge Studies in Advanced Mathematics}.
\newblock Cambridge University Press, 2003.

\bibitem{Voisin:2004aa}
C.~Voisin.
\newblock Intrinsic pseudo-volume forms and {K}-correspondences.
\newblock In A.~Collino, A.~Conte, and M.~Marchisio, editors, {\em The Fano
  Conference}, pages 761--792. Universit{\`a} di Torino, Dipartimento di
  Matematica, 2004.

\bibitem{Voisin:2008aa}
C.~Voisin.
\newblock On the chow ring of certain algebraic hyper-{K}{\"a}hler manifolds.
\newblock {\em Pure. Appl. Math. Q.}, 4(3):613--649, 2008.

\bibitem{Voisin:2014aa}
C.~Voisin.
\newblock {\em Chow Rings, Decomposition of the Diagonal, and the Topology of
  Families}, volume 187 of {\em Annals of Mathematics Studies}.
\newblock Princeton University Press, 2014.

\bibitem{Voisin:2016aa}
C.~Voisin.
\newblock Remarks and questions on coisotropic subvarieties and 0-cycles of
  hyper-{K}{\"a}hler varieties.
\newblock In C.~Faber, G.~Farkas, and G.~van~der Geer, editors, {\em K3
  Surfaces and Their Moduli}, volume 315 of {\em Progress in Mathematics}.
  Birkhauser, 2016.

\end{thebibliography}
\bibliographystyle{abbrv}

\textsc{Benjamin E. Diamond}
\newline Johns Hopkins University,
\newline Department of Mathematics,
\newline 3400 N. Charles Street,
\newline Baltimore, MD 21218\par\nopagebreak
email: \texttt{bdiamond@math.jhu.edu}

\end{document}